\newcommand{\tabincell}[2]{\begin{tabular}{@{}#1@{}}#2\end{tabular}}
\newtheorem{theorem}{Theorem}
\newtheorem{remark}{Remark}
\newtheorem{proposition}{Proposition}
\begin{document}
\title{\bf \Large{An augmented Lagrangian method with constraint generation for shape-constrained convex regression problems}\footnotemark[1]}
\author{Meixia Lin\footnotemark[2], \quad Defeng Sun\footnotemark[3], \quad Kim-Chuan Toh\footnotemark[4]}

\date{November 08, 2021}
\maketitle

\renewcommand{\thefootnote}{\fnsymbol{footnote}}
\footnotetext[1]{{\bf Funding}: Defeng Sun is supported in part by Hong Kong Research Grant Council under grant number 15304019 and Kim-Chuan Toh by the Ministry of Education, Singapore, under its Academic Research Fund Tier 3 grant call (MOE-2019-T3-1-010).}
\footnotetext[2]{Corresponding author. Institute of Operations Research and Analytics, National University of Singapore, Singapore 119076, Singapore ({\tt lin\_meixia@u.nus.edu}).}
\footnotetext[3]{Department of Applied Mathematics, The Hong Kong Polytechnic University, Hung Hom, Hong Kong ({\tt defeng.sun@polyu.edu.hk}).}
\footnotetext[4]{Department of Mathematics and Institute of Operations Research and Analytics, National University of Singapore, Singapore 119076, Singapore ({\tt mattohkc@nus.edu.sg}).}
\renewcommand{\thefootnote}{\arabic{footnote}}

\begin{abstract}
Shape-constrained convex regression problem deals with fitting a convex function to the observed data, where additional constraints are imposed, such as component-wise monotonicity and uniform Lipschitz continuity. This paper provides a unified framework for computing the least squares estimator of a multivariate shape-constrained convex regression function in $\mathbb{R}^d$. We prove that the least squares estimator is computable via solving an essentially constrained convex quadratic programming (QP) problem with $(d+1)n$ variables, $n(n-1)$ linear inequality constraints and $n$ possibly non-polyhedral inequality constraints, where $n$ is the number of data points. To efficiently solve the generally very large-scale convex QP, we design a proximal augmented Lagrangian method ({\tt proxALM}) whose subproblems are solved by the semismooth Newton method ({\tt SSN}). To further accelerate the computation when $n$ is huge, we design a practical implementation of the constraint generation method such that each reduced problem is efficiently solved by our proposed {\tt proxALM}. Comprehensive numerical experiments, including those in the pricing of basket options and estimation of production functions in economics, demonstrate that our proposed {\tt proxALM} outperforms the state-of-the-art algorithms, and the proposed acceleration technique further shortens the computation time by a large margin.
\end{abstract}

\medskip
\noindent
{\bf Keywords:} Shape-constrainted convex regression, Preconditioned proximal point algorithm, Semismooth Newton method, Constraint generation method
\\[5pt]
{\bf AMS subject classification:} 90C06, 90C25, 90C90

\section{Introduction}
\label{sec:introduction}
Convex (or concave) regression is meant to estimate a convex (or concave) function based on a finite number of observations. It is a topic of interest in many fields such as economics, operations research and financial engineering. In economics, production functions \cite{hildreth1954point,varian1984nonparametric,allon2007nonparametric}, demand functions \cite{varian1982nonparametric} and utility functions \cite{meyer1968consistent} are often required to be concave. In operations research, the performance measure expectations can be proved to be convex in the underlying model parameters, e.g. in the context of queueing network \cite{chen2013fundamentals}. In financial engineering, the option pricing function has the convexity restriction under the no-arbitrage condition, as can be seen from \cite{ait2003nonparametric}. In the literature, there are various methods for solving the convex regression problem. With the specification of a functional form, one can apply a parametric approach to estimate the convex function. For example, the Cobb-Douglas production function is a particular functional form of the production function that is widely used in applied production economics. To avoid strong prior assumptions on the functional form, one can also use a non-parametric approach to perform the function estimation. Generally, the nonparametric estimation is based on a given collection of primitive functions, such as local polynomial \cite{longstaff2001valuing}, trigonometric series, spline estimator \cite{dontchev2003quadratic,qi2007regularity} and kernel-type estimator \cite{aybat2014parallel}. However, such an approach may face some difficulties in imposing the convexity constraint and choosing appropriate smoothing parameters (e.g. the degree of the polynomial, or the kernel density bandwidth). To overcome these difficulties, one may choose to estimate the functions by empirical risk minimization \cite{cui2018composite} over the set of convex functions, wherein the squared error loss \cite{hildreth1954point} and the absolute error loss \cite{blanchet2019multivariate} are studied. In this paper, we focus on the least squares estimator for convex regression, whose theoretical properties are carefully studied in \cite{hanson1976consistency,seijo2011nonparametric,lim2012consistency}.

Suppose that we observe $n$ data points $\{(X_i,Y_i)\}_{i=1}^n$, which satisfy the regression model $Y = \psi(X)+\varepsilon$ for an unknown convex function $\psi:\Omega \rightarrow \mathbb{R}$, where $\Omega\subset \mathbb{R}^d$ is a $\delta$-neighborhood of ${\rm conv}(X_1,\cdots,X_n)$ (the convex hull of $\{X_i\}_{i=1}^n$), $\varepsilon$ is a random variable with expectation $\mathbb{E}[\varepsilon\vert X]=0$. The least squares estimator $\hat{\psi}$ of $\psi$ is defined as
\begin{align*}
\hat{\psi}\in \underset{\psi\in\mathcal{C}}{\arg\min}\ \sum_{i=1}^n(\psi(X_i)-Y_i)^2,\quad \mathcal{C}:=\{ \psi:\Omega \rightarrow \mathbb{R}\mid \psi\mbox{ is a convex function} \}.
\end{align*}
This infinite dimensional model appears to be intractable. Fortunately, the authors in \cite{kuosmanen2008representation,seijo2011nonparametric} have provided a computationally tractable optimal solution to it. They showed that the family of convex functions can be characterized by a subset of continuous, piecewise linear functions $\theta_i + \langle \xi_i,X-X_i\rangle$, $i=1,\cdots,n$, whose intercepts $\theta_i$'s and gradient vectors $\xi_i$'s are restricted to satisfy the convexity conditions. That is, a convex quadratic programming (QP) problem
\begin{align}
\min_{\begin{subarray}{c}\theta_1,\ldots,\theta_n\in\mathbb{R},\\ \xi_1,\ldots,\xi_n \in\mathbb{R}^{d}\end{subarray}} \
\left\{ \frac{1}{2} \sum_{i=1}^n (\theta_i - Y_i)^2 \Bigm\lvert  \theta_i \geq \theta_j + \langle \xi_j,X_i - X_j\rangle,\  1\leq i ,j \leq n \right\}\label{eq: convex_QP}
\end{align}
needs to be solved. The problem \eqref{eq: convex_QP} with $(d+1)n$ variables and $n(n-1)$ linear inequality constraints can be solved by interior point solvers such as those implemented in {\tt MOSEK} when $n$ is not too large, as stated in \cite{seijo2011nonparametric}. However, interior point solvers may quickly run out of memory when $n$ is large due to the presence of a large number of $n(n-1)$ linear inequality constraints. Mazumder et al. \cite{mazumder2019computational} adapted a three-block alternating direction method of multipliers ({\tt ADMM}) to solve \eqref{eq: convex_QP} but the method has no convergence guarantee. It needs about $1000$ seconds to solve an instance with $d=4$, $n\sim 3000$ to get a rough approximate solution. As the objective function in \eqref{eq: convex_QP} is not strongly convex, some papers including \cite{aybat2014parallel,chen2020multivariate} do not deal with \eqref{reformulated_problem} exactly but perturb the problem by adding an additional $\ell_2$ regularization term on the $\xi_i$'s. The regularization term allows one to apply the accelerated proximal gradient ({\tt APG}) method to the dual of the perturbed QP. For example, Aybat et al. \cite{aybat2014parallel} proposed a parallel {\tt APG} method. However, it is still not fast enough for solving large problems as it needs $17$ minutes to solve a problem with $d=80$, $n=1600$ on a $16$-core machine sharing $32$ GB. It should be noted that the regularization parameter may need to be extremely small in order for a solution of the perturbed QP to be optimal to the original QP under some kind of exact penalty property, while the dual of the perturbed QP also becomes harder to solve as the parameter becomes smaller. The computational challenge in solving the problem \eqref{eq: convex_QP} still remains in need of more progress, especially for the case when $d$ and $n$ are relatively large where existing methods are too expensive even for computing a solution with a moderate accuracy.

In many real applications, one may need to impose more shape constraints on the convex function $\psi$, such as component-wise monotonicity and uniform Lipschitz continuity. For example, the option pricing function under the no-arbitrage condition needs to be non-decreasing as well as convex as described in \cite{ait2003nonparametric}. In addition, when dealing with the Lipschitz convex regression as in \cite{lim2014convergence,balazs2015near,mazumder2019computational}, the uniform Lipschitz property of the convex function is added when performing the estimation. For the shape-constrained convex regression problem, the least squares estimator $\hat{\psi}$ is defined as
\begin{equation}\label{eq:min_CS}
\begin{aligned}
&\hat{\psi}\in \underset{\psi\in\mathcal{C}_\mathcal{S} }{\arg\min} \ \sum_{i=1}^n(\psi(X_i)-Y_i)^2,\\ &\mathcal{C}_\mathcal{S}=\{\psi:\Omega \rightarrow \mathbb{R}\mid \psi\mbox{ is a convex function with Property }\mathcal{S} \},
\end{aligned}
\end{equation}
where Property $\mathcal{S}$ specifies the shape constraint of $\psi$. We restrict ourselves to the case when Property $\mathcal{S}$ takes one of the following forms:
\begin{enumerate}[label=(S\arabic*),leftmargin=1.05cm]
	\item (monotone constraint) $\psi$ is non-decreasing in some of the coordinates (denoted as $K_1$) and non-increasing in some others (denoted as $K_2$), where $K_1$ and $K_2$ are disjoint subsets of $\{1,\cdots,d\}$;
	\item (box constraint) the elements in $\partial \psi(x)$ for any $x\in \Omega$ are bounded by two given vectors $L,U\in\mathbb{R}^d$;
	\item (Lipschitz constraint) $\psi$ is Lipschitz, i.e., $|\psi(x)-\psi(y)|\leq L \|x-y\|_p$ for any $x,y\in \Omega$, where $p=1,2,\infty$, and $L$ is a given positive constant.
\end{enumerate}

In this paper, we provide a unified framework for computing the least squares estimator for the shape-constrained convex regression problem \eqref{eq:min_CS}. We prove that the minimal sum of squared error can be achieved via a set of piecewise linear functions whose intercepts and gradient vectors are constrained to satisfy the convexity conditions and required shape constraints (see Theorem \ref{thm:relation_two_fun}). This conclusion leads us to an essentially constrained QP with $(d+1)n$ variables, $n(n-1)$ linear inequality constraints and $n$ possibly non-polyhedral constraints\footnote{Strictly speaking, it is no longer a conventional QP problem in the presence of the non-polyhedral constraints. Slightly abusing the notation, here we use QP for convenience.}. The addition of the shape constraints obviously would make the QP even more complicated and difficult to solve. Note that the estimator obtained in this way is nonsmooth, one can apply the Moreau proximal smoothing technique to obtain a smooth approximation. In addition, we can use a generalized form of the proposed constrained QP model as well as a data-driven Lipschitz estimation method to handle the boundary effect of the least squares estimator.

The main task in this framework is to solve the constrained QP in a robust and efficient manner. Most existing methods for the QP in the standard convex regression problem are either not extendable or difficult to be modified to efficiently solve the constrained QP due to the additional shape constraints. Moreover, except for interior point solvers which are only suitable for moderate size problems, almost all the other existing methods are first-order methods which may suffer from slow convergence rate when solving large-scale problems. For the multivariate shape-constrained convex regression problem, even with only a moderate number of observations, say $n=10^3$, the memory cost and computational cost are already massive since the underlying QP has about a million constraints. To tackle the potentially very large-scale constrained QPs, we design an asymptotically superlinearly convergent proximal augmented Lagrangian method ({\tt proxALM}), whose subproblems are solved by the semismooth Newton method ({\tt SSN}), a second order method that has quadratic convergence. In the algorithm, the second order sparsity structure of the problem is fully uncovered and exploited to highly reduce the computational cost of solving the Newton systems. Comprehensive numerical experiments demonstrate that our proposed {\tt proxALM} outperforms the state-of-the-art algorithms such as {\tt MOSEK} and {\tt ADMM} by a large margin.

Note that when the number of observations is very large, memory issues may appear. For the case when $n$ is huge, say $n=10^5$, the constrained QP contains $10^{10}$ linear inequality constraints. As an illustration, a vector with dimension $10^{10}$ requires $74.5$GB of RAM to store in dense double precision, which implies that it is almost impossible to solve the constrained QP with $n=10^5$ on an ordinary desktop PC. This motivates us to explore the problem structure to overcome the computational and memory challenges of solving high-sample problems. As constraint generation techniques (also known as cutting plane methods) have been popular in solving linear programs with a large number of constraints \cite{bertsimas1997introduction}, some researchers have applied this idea to solve convex regression problems. Hannah and Dunson \cite{hannah2013multivariate} considered a globally convex regression model from locally linear estimates fitted on adaptively selected observations, and Bal$\acute{\rm a}$zs et al. \cite{balazs2015near} proposed an aggregate cutting plane method for solving the convex regression problem, but their computation was limited to moderate problem sizes or low accuracy. Bertsimas and Mundru \cite{bertsimas2021sparse} used a cutting plane method with each reduced problem solved by the commercial solver {\tt Gurobi}. They reported solving an instance with $(d,n)=(10^2,10^5)$ to moderate accuracy in about $7$ hours. Recently, Chen and Mazumder \cite{chen2020multivariate} adapted the constraint generation method to solve the perturbed QP for the case when $n=10^4,10^5$, $d\leq 10$, where they applied the {\tt APG} method to solve the dual of each reduced problem. However, the solutions they obtained are not guaranteed to satisfy the optimality conditions.

The main challenges of applying the constraint generation method to solve convex regression problems are summarized in two aspects. First, each reduced problem of the original QP without the perturbation term needs to be solved to sufficiently high accuracy in order to determine the violated constraint unambiguously. Second, given an approximate optimal solution, it is computationally expensive to search all $O(n^2)$ constraints to find the violated ones and check the optimality conditions. Note that existing interior point solvers or first-order algorithms (such as {\tt APG} and {\tt ADMM}) could not solve large-scale problems to high accuracy efficiently. Thus a constraint generation method employing those solvers needs to be conservative in allowing a small number of violated constraints to be added in each round. As a result, it may take many rounds of the constraint generation to find a solution with the required accuracy for the original QP. This implies that the computational cost of searching for violated constraints and checking optimality conditions can be very large, which is unaffordable in practice. Fortunately, our proposed {\tt proxALM} allows us to solve large-scale problems to high accuracy efficiently, which motivates us to design a practical implementation of the constraint generation method to solve the shape-constrained convex regression problem. In our implementation, we add a relatively large number of most violated constraints in each round to greatly reduce the number of rounds of the constraint generation. For each reduced problem, we apply our {\tt proxALM}, which is demonstrated to be much more efficient in solving large-scale problems than other state-of-the-art algorithms.

We summarize our main contributions in this paper as follows.
\begin{itemize}[noitemsep,topsep=0pt]
	\item[1] We provide a unified framework for computing the least squares estimator in the shape-constrained convex regression problem \eqref{eq:min_CS}, wherein a constrained QP with $(d+1)n$ variables, $n(n-1)$ linear inequality constraints and $n$ possibly non-polyhedral inequality constraints needs to be solved.
	\item[2] To solve the constrained QP, we propose an asymptotically superlinearly convergent proximal augmented Lagrangian method, where each subproblem of the {\tt proxALM} is solved by the semismooth Newton method. We analyse the second order sparsity structure of the subproblems and develop novel numerical techniques to solve the semismooth Newton linear systems efficiently through exploiting the uncovered structure. Comprehensive numerical experiments, including those in the pricing of basket options and estimation of production functions, demonstrate that the proposed {\tt proxALM} outperforms other state-of-the-art algorithms such as {\tt MOSEK} and {\tt ADMM} by a large margin, especially for large-scale problems.
	\item[3] To solve the shape-constrained convex regression problem with a huge sample size, we design a practical implementation of the constraint generation method where each of its reduced problem is solved by our proposed {\tt proxALM}. Numerical experiments are also performed to demonstrate the high efficiency of the constraint generation method with {\tt proxALM}.
\end{itemize}

In the remaining part of the paper, we provide a unified framework for estimating the multivariate shape-constrained convex function in Section \ref{sec:mechanism}. For solving the involved constrained QP, the proximal augmented Lagrangian method is described in Section \ref{sec:pALM}. The implementation details of the proposed {\tt proxALM} can be found in Section \ref{sec:proximalmapping}. In Section \ref{sec:adaptive sieving}, we design a practical implementation of the constraint generation method to solve shape-constrained convex regression problems with huge samples sizes. Section \ref{sec:numerical} provides the numerical comparison of {\tt proxALM} with other start-of-the-art algorithms. Experiments are also conducted to demonstrate the superior performance of the constraint generation method combined with the {\tt proxALM} for solving instances with huge samples sizes. Then we apply our framework to perform the function estimation in several interesting real applications in Section \ref{sec:realapplication}. Finally, we conclude the paper.

\paragraph{Notation.} Denote $X=(X_1,\cdots,X_n)\in \mathbb{R}^{d\times n}$, $e_n\in \mathbb{R}^n$ be the vector of all ones, and $I_n$ be the $n\times n$ identity matrix. For any matrix $Z\in \mathbb{R}^{m\times n}$, $Z_i$ denotes the $i$-th column of $Z$. We use ``${\rm Diag}(z)$" to denote the diagonal matrix whose diagonal is given by the vector $z$, and use ${\rm Diag}(Z_1,\cdots,Z_n)$ to denote the block diagonal matrix whose $i$-th block is the matrix $Z_i$. For any symmetric and positive semidefinite matrix $H\in \mathbb{R}^{n\times n}$, we define $\langle x,x'\rangle_{H}:=\langle x,H x'\rangle$, and $\|x\|_{H}:=\sqrt{\langle x,x\rangle_{H}}$ for all $x,x'\in \mathbb{R}^n$. For a given closed subset $C$ of $\mathbb{R}^n$ and $x\in\mathbb{R}^n$, we define ${\rm dist}_{H}(x,C)=\min\{\|x-y\|_H\mid y\in C\}$. The largest (smallest) eigenvalue of $H$ is denoted as $\lambda_{\max}(H)$ ($\lambda_{\min}(H)$). Given $x\in \mathbb{R}^n$ and an index set $K\subset \{1,\cdots,n\}$, $x_K$ denotes the sub-vector of $x$ with those elements not in $K$ being removed. Let $q:\mathbb{R}^n\rightarrow (-\infty,\infty]$ be a closed proper convex function. The conjugate of $q$ is $q^*(z):=\sup_{x\in\mathbb{R}^n}\{\langle x,z\rangle-q(x)\}$. The Moreau envelope of $q$ at $x$ is defined by
\[
{\rm E}_q(x):=\min_{y\in \mathbb{R}^n}\Big\{ q(y)+\frac{1}{2}\|y-x\|^2\Big\},
\]
and the associated proximal mapping ${\rm Prox}_q(x)$ is defined as the unique solution of the above minimization problem. As proved in \cite{moreau1965proximite}, ${\rm E}_q(\cdot)$ is finite-valued, convex and differentiable with $\nabla {\rm E}_q(x)=x-{\rm Prox}_q(x)$. In addition, we can see from \cite{rockafellar1976monotone,nocedal2006numerical} that ${\rm Prox}_q(x)$ is Lipschitz continuous with modulus $1$.

\section{A unified framework to estimate the multivariate shape-constrained convex function}
\label{sec:mechanism}
In this section, we provide a unified framework for computing the least squares estimator for the multivariate shape-constrained convex function defined in \eqref{eq:min_CS}. Before describing the process, we first characterize Property $\mathcal{S}$ in the following proposition. For brevity, we omit the proof.
\begin{proposition}\label{prop:defD}
	A convex function $\psi$ has Property $\mathcal{S}$ if and only if for any $x\in \mathbb{R}^d$, the subdifferential of $\psi$ satisfies $\partial \psi(x)\subset \mathcal{D}$, where $\mathcal{D}$ is defined corresponding to Property $\mathcal{S}$ as follows:
	\begin{enumerate}[label=(S\arabic*),leftmargin=1.05cm]
		\item (monotone constraint) $\mathcal{D}=\{x\in \mathbb{R}^d\mid x_{K_1}\geq 0,x_{K_2}\leq 0\}$,
		\item (box constraint) $\mathcal{D}=\{x\in \mathbb{R}^d\mid L\leq x\leq U\}$,
		\item (Lipschitz constraint) $\mathcal{D}=\{x\in \mathbb{R}^d\mid \|x\|_q\leq L\}$, where $q$ satisfies $1/p+1/q=1$. In particular, $q=\infty,2,1$ when $p=1,2,\infty$, respectively.
	\end{enumerate}
\end{proposition}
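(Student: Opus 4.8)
The plan is to reduce all three cases to a single characterization that relates the inclusion $\partial\psi(x)\subseteq\mathcal{D}$ to a global growth inequality expressed through the support function $\sigma_{\mathcal{D}}(v):=\sup_{g\in\mathcal{D}}\langle g,v\rangle$ of the closed convex set $\mathcal{D}$. Concretely, I would first establish the unified claim that, for a finite convex function $\psi$ and a nonempty closed convex set $\mathcal{D}\subseteq\mathbb{R}^d$,
\[
\partial\psi(x)\subseteq\mathcal{D}\ \text{for all}\ x \iff \psi(y)-\psi(x)\leq\sigma_{\mathcal{D}}(y-x)\ \text{for all}\ x,y.
\]
Once this is available, each of (S1)--(S3) follows by computing $\sigma_{\mathcal{D}}$ for the corresponding $\mathcal{D}$ and matching the resulting inequality against the definition of Property $\mathcal{S}$.

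For the unified claim I would argue both directions through the directional derivative, using the standard identity $\psi'(x;v)=\sup_{g\in\partial\psi(x)}\langle g,v\rangle=\sigma_{\partial\psi(x)}(v)$. Since a closed convex set is determined by its support function, $\partial\psi(x)\subseteq\mathcal{D}$ is equivalent to $\sigma_{\partial\psi(x)}(v)\leq\sigma_{\mathcal{D}}(v)$ for all $v$, i.e. $\psi'(x;v)\leq\sigma_{\mathcal{D}}(v)$. The passage to the global inequality uses the one-dimensional restriction $\phi(t):=\psi(x+t(y-x))$: from $\partial\psi\subseteq\mathcal{D}$ everywhere one gets $\phi'(t)\leq\sigma_{\mathcal{D}}(y-x)$ wherever $\phi$ is differentiable, and integrating over $[0,1]$ yields $\psi(y)-\psi(x)\leq\sigma_{\mathcal{D}}(y-x)$; conversely, dividing $\psi(x+tv)-\psi(x)\leq\sigma_{\mathcal{D}}(tv)=t\,\sigma_{\mathcal{D}}(v)$ by $t\downarrow 0$ and using positive homogeneity of $\sigma_{\mathcal{D}}$ recovers $\psi'(x;v)\leq\sigma_{\mathcal{D}}(v)$, hence $\partial\psi(x)\subseteq\mathcal{D}$.

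Then I would specialize. For the monotone case $\mathcal{D}=\{x\mid x_{K_1}\geq 0,\ x_{K_2}\leq 0\}$ a direct computation gives $\sigma_{\mathcal{D}}(v)=0$ when $v_k=0$ for $k\notin K_1\cup K_2$, $v_{K_1}\leq 0$ and $v_{K_2}\geq 0$, and $\sigma_{\mathcal{D}}(v)=+\infty$ otherwise; the inequality is vacuous except on this cone, where it reads $\psi(y)\leq\psi(x)$, which is exactly the statement that $\psi$ is non-decreasing in the coordinates $K_1$ and non-increasing in $K_2$ (obtained by moving one coordinate at a time). For the box case the equivalence is immediate, since (S2) is by definition a bound on the elements of $\partial\psi(x)$. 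For the Lipschitz case $\mathcal{D}$ is the $\|\cdot\|_q$-ball of radius $L$, whose support function is $\sigma_{\mathcal{D}}(v)=L\|v\|_p$ by the definition of the dual norm (recall $1/p+1/q=1$); the global inequality $\psi(y)-\psi(x)\leq L\|y-x\|_p$ is symmetric in $x,y$ and hence equivalent to $|\psi(x)-\psi(y)|\leq L\|x-y\|_p$, i.e. (S3). Alternatively, (S1) and (S3) admit direct proofs: for (S3) the forward direction combines the subgradient inequality with the Lipschitz bound and takes the supremum over unit-$\|\cdot\|_p$ directions to recover $\|g\|_q\leq L$, while the reverse direction applies the subgradient inequalities at $x$ and at $y$ together with H\"older's inequality.

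The main obstacle I expect lies in the unified claim rather than in the case-by-case computations: specifically, justifying $\psi'(x;v)=\sigma_{\partial\psi(x)}(v)$ and the support-function comparison $A\subseteq B\iff\sigma_A\leq\sigma_B$ for \emph{possibly unbounded} sets, as in the monotone case where $\sigma_{\mathcal{D}}$ takes the value $+\infty$, together with the almost-everywhere differentiability and integration step for the restriction $\phi$. One must also be careful at boundary points of $\Omega$ where $\partial\psi$ may be empty; restricting attention to the interior (where subgradients of a finite convex function exist) suffices, and the growth inequalities extend to the closure by continuity.
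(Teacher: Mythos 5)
The paper gives no proof of this proposition --- it states ``For brevity, we omit the proof'' --- so there is no in-paper argument to compare against; your proposal must be judged on its own, and it is correct. The unified reduction
\[
\partial\psi(x)\subseteq\mathcal{D}\ \text{for all }x \iff \psi(y)-\psi(x)\leq\sigma_{\mathcal{D}}(y-x)\ \text{for all }x,y
\]
is sound, and the three specializations (polar-cone support function for (S1), the tautological reading of (S2), and the dual-norm identity $\sigma_{\mathcal{D}}=L\|\cdot\|_p$ for (S3)) are all computed correctly. The caveats you flag are the right ones and are all handled: $\mathcal{D}$ is closed and convex in every case, so $A\subseteq\mathcal{D}\iff\sigma_A\leq\sigma_{\mathcal{D}}$ holds even when $\sigma_{\mathcal{D}}$ takes the value $+\infty$; $\psi'(x;\cdot)=\sigma_{\partial\psi(x)}(\cdot)$ holds at interior points of $\Omega$ where $\partial\psi(x)$ is nonempty and compact; and restricting to the interior and passing to the closure by continuity deals with boundary points (the proposition's ``for any $x\in\mathbb{R}^d$'' is itself loose on this point). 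One simplification worth noting: the forward direction does not need the one-dimensional restriction, a.e.\ differentiability, or integration at all --- taking any $g\in\partial\psi(y)$ and applying the subgradient inequality at $y$ gives $\psi(y)-\psi(x)\leq\langle g,y-x\rangle\leq\sigma_{\mathcal{D}}(y-x)$ in one line, which removes the only technically delicate step in your argument.
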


The least squares estimation problem \eqref{eq:min_CS} attempts to find a best-fitting function $\hat{\psi}$ from the function family $\mathcal{C}_\mathcal{S}$, which is infinite dimensional. Therefore, this problem is intractable in practice. In order to design a tractable approach, we establish the following representation theorem to \eqref{eq:min_CS}, which is motivated by \cite{kuosmanen2008representation}.

\begin{theorem}\label{thm:relation_two_fun}
	Define the set of piecewise linear functions as
	\begin{equation}\label{eq:KS}
	\mathcal{K}_\mathcal{S}:=\left\{\phi:\Omega \rightarrow \mathbb{R}\left|
	\begin{aligned}
	&\phi(x)=\max_{1\leq j\leq n}\{  \theta_j+\langle \xi_j,x-X_j\rangle\},\\
	&(\theta_1,\cdots,\theta_n,\xi_1,\cdots,\xi_n)\in \mathcal{F}_\mathcal{S}
	\end{aligned}
	\right\},\right.
	\end{equation}
	where
	\begin{equation}\label{eq:FS}
	\mathcal{F}_\mathcal{S}:=\left\{ (\theta_1,\cdots,\theta_n,\xi_1,\cdots,\xi_n)\left|
	\begin{aligned}
	&\theta_i\in \mathbb{R},\xi_i\in \mathcal{D},i=1,\cdots,n, \\
	&\theta_i \geq \theta_j + \langle \xi_j,X_i - X_j\rangle,1\leq i ,j \leq n
	\end{aligned}
	\right\},\right.
	\end{equation}
	and $\mathcal{D}$ is defined as in Proposition \ref{prop:defD}. Consider the problem
	\begin{align}\label{min_KS}
	\min_{\phi\in \mathcal{K}_\mathcal{S}}\sum_{i=1}^n(\phi(X_i)-Y_i)^2.
	\end{align}
	Then the following equality holds:
	\begin{align}\label{eq:twomin}
	\min_{\psi\in\mathcal{C}_\mathcal{S}}
	\sum_{i=1}^n(\psi(X_i)-Y_i)^2=\min_{\phi\in\mathcal{K}_\mathcal{S}}
	\sum_{i=1}^n(\phi(X_i)-Y_i)^2.
	\end{align}
	Moreover, any solution $\hat{\phi}$ to \eqref{min_KS} is a solution to the problem \eqref{eq:min_CS}.
\end{theorem}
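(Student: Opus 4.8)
The plan is to establish the equality \eqref{eq:twomin} by proving the two inequalities separately, exploiting the fact that both objectives depend on $\psi$ (or $\phi$) only through the finitely many values $\psi(X_1),\ldots,\psi(X_n)$ at the data points.

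First I would show the easy inclusion $\mathcal{K}_\mathcal{S}\subset\mathcal{C}_\mathcal{S}$, which immediately yields $\min_{\psi\in\mathcal{C}_\mathcal{S}}\sum_i(\psi(X_i)-Y_i)^2\le\min_{\phi\in\mathcal{K}_\mathcal{S}}\sum_i(\phi(X_i)-Y_i)^2$ since the right-hand minimization runs over a smaller feasible set. Any $\phi\in\mathcal{K}_\mathcal{S}$ is a pointwise maximum of affine functions and is therefore convex; it remains to verify that it has Property $\mathcal{S}$. By Proposition \ref{prop:defD} this reduces to checking $\partial\phi(x)\subset\mathcal{D}$ for every $x$. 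Using the classical formula that the subdifferential of a finite maximum of affine functions at $x$ is the convex hull of the gradients $\xi_j$ of the pieces active at $x$, together with the constraint $\xi_j\in\mathcal{D}$ built into $\mathcal{F}_\mathcal{S}$ in \eqref{eq:FS}, the inclusion follows once we observe that $\mathcal{D}$ is convex in each of the three cases (S1)--(S3) — being an intersection of half-spaces, a box, and a norm ball, respectively — so the convex hull of elements of $\mathcal{D}$ remains in $\mathcal{D}$.

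Next I would prove the reverse inequality by an explicit interpolation construction. Given any $\psi\in\mathcal{C}_\mathcal{S}$, set $\theta_i:=\psi(X_i)$ and pick a subgradient $\xi_i\in\partial\psi(X_i)$ for each $i$; since the data points lie in the interior of $\Omega$ these subdifferentials are nonempty, and since $\psi$ has Property $\mathcal{S}$, Proposition \ref{prop:defD} gives $\xi_i\in\mathcal{D}$. The subgradient inequality $\psi(X_i)\ge\psi(X_j)+\langle\xi_j,X_i-X_j\rangle$ is exactly the constraint $\theta_i\ge\theta_j+\langle\xi_j,X_i-X_j\rangle$ appearing in \eqref{eq:FS}, so $(\theta_1,\ldots,\theta_n,\xi_1,\ldots,\xi_n)\in\mathcal{F}_\mathcal{S}$ and the associated $\phi(x)=\max_{1\le j\le n}\{\theta_j+\langle\xi_j,x-X_j\rangle\}$ lies in $\mathcal{K}_\mathcal{S}$. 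Evaluating at $X_i$, those same constraints force every off-diagonal term to be dominated by the $j=i$ term, giving $\phi(X_i)=\theta_i=\psi(X_i)$; hence $\phi$ matches $\psi$ at all data points and attains the same objective value, proving $\min_{\phi\in\mathcal{K}_\mathcal{S}}\le\min_{\psi\in\mathcal{C}_\mathcal{S}}$.

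Combining the two inequalities yields \eqref{eq:twomin}. For the final assertion, any minimizer $\hat\phi$ of \eqref{min_KS} belongs to $\mathcal{K}_\mathcal{S}\subset\mathcal{C}_\mathcal{S}$ and is thus feasible for \eqref{eq:min_CS}, while its objective value equals the common optimal value by \eqref{eq:twomin}; therefore $\hat\phi$ solves \eqref{eq:min_CS}. I expect the main obstacle to be the first step, namely justifying $\partial\phi(x)\subset\mathcal{D}$ rigorously, which hinges on the convex-hull characterization of the subdifferential of a max-type function combined with the convexity of $\mathcal{D}$; the interpolation step is then routine once the subgradient inequality is invoked.
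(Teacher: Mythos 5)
Your proposal is correct and follows essentially the same route as the paper: the inclusion $\mathcal{K}_\mathcal{S}\subset\mathcal{C}_\mathcal{S}$ via the convex-hull formula for the subdifferential of a max of affine functions (plus convexity of $\mathcal{D}$), and the reverse inequality via the subgradient interpolation $\theta_i=\psi(X_i)$, $\xi_i\in\partial\psi(X_i)$ with $\phi(X_i)=\psi(X_i)$. The only cosmetic difference is that the paper wraps the second step in an $\varepsilon$-approximate-minimizer argument, whereas you apply the construction to an arbitrary $\psi\in\mathcal{C}_\mathcal{S}$ and pass to the infimum, which is equally valid.
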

\begin{proof}
	We first prove that $\mathcal{K}_\mathcal{S}\subset \mathcal{C}_\mathcal{S}$, that is, the functions in $\mathcal{K}_\mathcal{S}$ are convex functions with Property $\mathcal{S}$. Convexity comes from the fact that any pointwise maximum function is convex. Given any function $\phi\in \mathcal{K}_\mathcal{S}$ determined by $(\theta_1,\cdots,\theta_n,\xi_1,\cdots,\xi_n)\in \mathcal{F}_\mathcal{S}$, the subdifferential of this piecewise linear function is a polyhedron according to \cite[Theorem 25.6]{rockafellar1970convex}, which is given by
	\begin{align*}
	\partial \phi(x)={\rm conv}\{\xi_i\mid i\in I(x)\},\quad I(x):=\{i\mid \theta_i+\langle \xi_i,x-X_i\rangle=\phi(x) \}.
	\end{align*}
	By the definition of $\mathcal{D}$ and $\mathcal{F}_\mathcal{S}$, we can see that $\partial \phi(x)\subset\mathcal{D}$ for any $x\in \Omega$. According to Proposition \ref{prop:defD}, the convex function $\phi$ has Property $\mathcal{S}$, which means that $\phi\in\mathcal{C}_\mathcal{S}$. Hence $\mathcal{K}_\mathcal{S}\subset \mathcal{C}_\mathcal{S}$. Therefore, we have that
	\begin{align*}
	\min_{\psi\in\mathcal{C}_\mathcal{S}}
	\sum_{i=1}^n(\psi(X_i)-Y_i)^2\leq\min_{\phi\in\mathcal{K}_\mathcal{S}}
	\sum_{i=1}^n(\phi(X_i)-Y_i)^2.
	\end{align*}
	
	Next we prove the reverse inequality. Let  $\varepsilon>0$ be an arbitrary positive number. Then there exists $\hat{\psi}_{\varepsilon}\in \mathcal{C}_\mathcal{S}$ such that
	\begin{align*}
	\sum_{i=1}^n(\hat{\psi}_{\varepsilon}(X_i)-Y_i)^2\leq\min_{\psi\in\mathcal{C}_\mathcal{S}}
	\sum_{i=1}^n(\psi(X_i)-Y_i)^2+\varepsilon.
	\end{align*}
	For $i=1,\cdots,n$,  choose $\hat{\xi}_{\varepsilon,i}\in \partial \hat{\psi}_{\varepsilon}(X_i)$. Then
	\begin{align*}
	(\hat{\psi}_{\varepsilon}(X_1),\cdots,\hat{\psi}_{\varepsilon}(X_n),\hat{\xi}_{\varepsilon,1},\cdots,\hat{\xi}_{\varepsilon,n})\in \mathcal{F}_\mathcal{S} 
	\end{align*}
	and
	\begin{align*}
	\hat{\phi}_{\varepsilon}(x):=\max_{1\leq j\leq n}\{  \hat{\psi}_{\varepsilon}(X_j)+\langle \hat{\xi}_{\varepsilon,j},x-X_j\rangle\}\in\mathcal{K}_\mathcal{S}.
	\end{align*}
	The fact that inequalities $\hat{\psi}_{\varepsilon}(X_i) \geq \hat{\psi}_{\varepsilon}(X_j) + \langle \hat{\xi}_{\varepsilon,j},X_i - X_j\rangle$ hold for all $i,j$ implies that
	\begin{align*}
	\hat{\phi}_{\varepsilon}(X_i) = \max_{1\leq j\leq n}\{  \hat{\psi}_{\varepsilon}(X_j)+\langle \hat{\xi}_{\varepsilon,j},X_i-X_j\rangle\}=\hat{\psi}_{\varepsilon}(X_i),\quad i = 1,\cdots,n.
	\end{align*}
	Then, it holds that
	\begin{align*}
	\min_{\phi\in\mathcal{K}_\mathcal{S}}
	\sum_{i=1}^n(\phi(X_i)-Y_i)^2&\leq \sum_{i=1}^n(\hat{\phi}_{\varepsilon}(X_i)-Y_i)^2=\sum_{i=1}^n(\hat{\psi}_{\varepsilon}(X_i)-Y_i)^2\\
	&\leq\min_{\psi\in\mathcal{C}_\mathcal{S}}
	\sum_{i=1}^n(\psi(X_i)-Y_i)^2+\varepsilon.
	\end{align*}
	Since the above inequality holds for any $\varepsilon>0$, the equality \eqref{eq:twomin} follows. Now suppose that $\hat{\phi}$ is an optimal solution to \eqref{min_KS}.  Since $\hat{\phi}\in \mathcal{C}_\mathcal{S}$, from \eqref{eq:twomin} we know  that $\hat{\phi}$ is a solution to the problem \eqref{eq:min_CS}. \qed
\end{proof}

The theorem above provides a tractable approach to compute \eqref{eq:min_CS} through solving \eqref{min_KS}. By definition, any function $\phi$ in $\mathcal{K}_\mathcal{S}$, which is determined by $(\theta_1,\cdots,\theta_n,\xi_1,\cdots,\xi_n)\in \mathcal{F}_\mathcal{S}$, satisfies
\begin{align*}
\phi(X_i)=\max_{1\leq j\leq n}\{ \theta_j+\langle \xi_j,X_i-X_j\rangle\}=\theta_i,\quad i=1,\cdots,n.
\end{align*}
Therefore, we can conclude the framework for computing an optimal solution to \eqref{eq:min_CS} as follows.

\paragraph{A unified framework for shape-constrained convex regression.} Suppose that $\{(\hat{\theta}_i,\hat{\xi}_i)\}_{i=1}^n$ is an optimal solution to
\begin{align}\label{convex_LSE_D}
\min_{\theta_1,\ldots,\theta_n\in\mathbb{R};  \xi_1,\ldots,\xi_n \in\mathbb{R}^{d}}  \
\Big\{  \frac{1}{2}\|\theta-Y\|^2 \Bigm\lvert (\theta_1,\cdots,\theta_n,\xi_1,\cdots,\xi_n)\in \mathcal{F}_\mathcal{S}\Big\},
\end{align}
where the feasible set $\mathcal{F}_\mathcal{S}$ is defined as in \eqref{eq:FS}. We can construct an optimal solution to \eqref{eq:min_CS} by taking
\begin{align}
\hat{\psi}(x) = \max_{1\leq j\leq n} \ \Big\{ \hat{\theta}_j + \langle \hat{\xi}_j,x- X_j\rangle
\Big\}, \quad  x\in \Omega.\label{extension_shape}
\end{align}
As one can see, the main task in our framework for estimating the shape-constrained convex function is to solve the constrained convex quadratic programming problem \eqref{convex_LSE_D}.

Define the matrix $A=e_n\otimes I_n-I_n\otimes e_n\in \mathbb{R}^{n^2\times n}$, where $``\otimes"$ denotes the Kronecker product. Then it could be seen that $A^TA = 2nI_n-2e_ne_n^T$. Denote $\xi=(\xi_1;\cdots;\xi_n)\in \mathbb{R}^{dn}$ and $B={\rm Diag}(B_1,\cdots,B_n)\in \mathbb{R}^{n^2\times dn}$ with $B_i=e_nX_i^T-X^T\in \mathbb{R}^{n\times d}$ for $i=1,\cdots,n$. Based on these notations, the problem \eqref{convex_LSE_D} can equivalently be written as
\begin{align}
\min_{\theta\in \mathbb{R}^n,\xi\in \mathbb{R}^{dn}} \ \Big\{\frac{1}{2}\|\theta-Y\|^2+p(\xi)+\delta_{+}(A\theta+B\xi)\Big\},\tag{P}\label{reformulated_problem}
\end{align}
where $p(\xi)= \sum_{i=1}^n \delta_\mathcal{D}(\xi_i)$ and $\delta_{\pm}(\cdot)$ is the indicator function of $\mathbb{R}^{n^2}_{\pm}$.

\paragraph{Smooth approximation.} Note that the function $\hat{\psi}$ obtained by \eqref{extension_shape} is nonsmooth. When a smooth function is required, we can compute a smooth approximation to $\hat{\psi}$. The idea of Nesterov's smoothing \cite{nesterov2005smooth} could be applied, and the details is described in \cite[Section 3]{mazumder2019computational}. Alternatively, one can use the Moreau envelope as a smooth approximation of $\hat{\psi}$, namely
\begin{align}
\hat{\psi}^{\rm M}_{\tau}(x)=\tau {\rm E}_{\hat{\psi}/\tau}(x)=\min_{y\in \mathbb{R}^d}\  \Big\{\hat{\psi}(y)+\frac{\tau}{2}\|y-x\|^2
\Big\},\label{smooth_moreau}
\end{align}
where $\tau>0$ is a regularization parameter. Note that
\begin{align*}
\hat{\psi}^{\rm M}_{\tau}(x) = \min_{y\in \mathbb{R}^d,t\in \mathbb{R}}\ \Big\{t+\frac{\tau}{2}\|y-x\|^2\Bigm\lvert
t\geq \langle \hat{\xi}_j,y\rangle -\langle \hat{\xi}_j,X_j\rangle+\hat{\theta}_j,\ j=1,\cdots,n
\Big\},
\end{align*}
the unique optimal solution ${\rm Prox}_{\hat{\psi}/\tau}(x)$ of \eqref{smooth_moreau} can be obtained by solving a quadratic programming of dimension $d+1$, which could be efficiently computed by {\tt Gurobi} or {\tt MOSEK}. One can see that for any $\tau>0$, $\hat{\psi}^{\rm M}_{\tau}$ is convex, and differentiable with $\nabla \hat{\psi}^{\rm M}_{\tau}(x)=\tau(x-{\rm Prox}_{\hat{\psi}/\tau}(x))$. In addition, according to \cite{beck2012smoothing}, the approximation $\hat{\psi}^{\rm M}_{\tau}$ of $\hat{\psi}$ satisfies the approximation bound
\begin{align*}
0\leq \hat{\psi}(x)-\hat{\psi}^{\rm M}_{\tau}(x)\leq \frac{1}{2\tau}{\rm dist}^2(0,\partial \hat{\psi}(x))\leq \frac{L^2}{2\tau},\quad \forall x\in \Omega,
\end{align*}
where $L=\max\{\|\xi_j\|_2\mid j=1,\cdots,n\}$.

\section{A proximal augmented Lagrangian method ({\tt proxALM}) for \eqref{reformulated_problem}}
\label{sec:pALM}
The augmented Lagrangian method is a desirable method for solving convex composite programming problems due to its superlinear convergence. To take advantage of the fast local convergence, we design a proximal augmented Lagrangian method for solving \eqref{reformulated_problem}. In order to solve the {\tt proxALM} subproblems, we propose a semismooth Newton method, which is proved to have quadratic convergence. By making full use of the special structure of the problem, we can exploit the second-order sparsity structure of the underlying subproblems to greatly reduce the computational cost. It should be noted that in addition to the algorithmic design, the most important part of the {\tt proxALM} is the numerical implementation, which will be discussed in detail in the next section.

The Lagrangian function associated with the unconstrained minimization problem \eqref{reformulated_problem} is given by
\begin{align*}
&l(\theta,\xi;u,v)\\
&=\inf_{\eta\in \mathbb{R}^{n^2},y\in \mathbb{R}^{dn}}\ \Big\{\frac{1}{2}\|\theta-Y\|^2+p(\xi-y)
+\delta_{+}(A\theta+B\xi-\eta) -\langle v,y\rangle-\langle u,\eta\rangle\Big\}\\
&=\frac{1}{2}\|\theta-Y\|^2-p^*(-v)-\langle v,\xi\rangle -\delta_{+}(u)-\langle u,A\theta+B\xi\rangle .
\end{align*}
The dual problem of \eqref{reformulated_problem}, $\max_{u\in\mathbb{R}^{n^2},v\in \mathbb{R}^{dn}}\min_{\theta\in\mathbb{R}^n,\xi\in\mathbb{R}^{dn}}l(\theta,\xi;u,v)$, is explicitly given as follows:
\begin{equation}\tag{D}\label{D}
\begin{aligned}
&\max_{u\in\mathbb{R}^{n^2},v\in \mathbb{R}^{dn}}\ \Big\{-\frac{1}{2}\|A^Tu\|^2-\langle Y,A^Tu\rangle-p^*(-v)-\delta_{+}(u)\Big\}
\\ \qquad  & {\rm s.t.}\quad B^Tu+v=0.
\end{aligned}
\end{equation}
The Karush-Kuhn-Tucker (KKT) conditions associated with \eqref{reformulated_problem} and \eqref{D} are:
\begin{align}\label{KKT_system}
\theta-Y-A^T u=0,\ B^T u+v=0,\ -v\in\partial p(\xi),\ -u\in \partial \delta_{+}(A\theta+B\xi).
\end{align}
The augmented Lagrangian function associated with \eqref{reformulated_problem} for any fixed $\sigma>0$ can be derived as
\begin{align*}
&\mathcal{L}_{\sigma}(\theta,\xi;u,v)\\
&=\sup_{s\in \mathbb{R}^{n^2},t\in \mathbb{R}^{dn}}\
\Big\{ l(\theta,\xi;s,t)-\frac{1}{2\sigma}\|s-u\|^2-\frac{1}{2\sigma}
\|t-v\|^2\Big\}\\
&=\frac{1}{2}\|\theta-Y\|^2+\sigma{\rm E}_{p}(\xi-\frac{v}{\sigma})+\sigma{\rm E}_{\delta_{+}}(A\theta+B\xi-\frac{u}{\sigma}) -\frac{1}{2\sigma}\|u\|^2-\frac{1}{2\sigma}\|v\|^2.
\end{align*}
Our proposed {\tt proxALM} for solving \eqref{reformulated_problem} has the template as in Algorithm \ref{alg:pALM}.

\begin{algorithm}[h]
	\caption{: Proximal augmented Lagrangian method for \eqref{reformulated_problem}}
	\label{alg:pALM}
	\begin{algorithmic}[1]
		\STATE {\bfseries Initialization:} Let $H_1\in \mathbb{R}^{n\times n}$, $H_2\in \mathbb{R}^{dn\times dn}$ be given symmetric and positive definite matrices, and $\{\varepsilon_k\}$ be a given summable sequence of nonnegative numbers. Choose an initial point $(\theta^0,\xi^0,u^0,v^0)\in \mathbb{R}^n\times \mathbb{R}^{dn}\times\mathbb{R}^{n^2}\times \mathbb{R}^{dn}$, $\sigma_0 > 0$. For $k = 0, 1, 2, \dots$
		\REPEAT
		\STATE {\bfseries Step 1}. Compute
		\begin{eqnarray}\label{eq:solve_thetaxi}
		\begin{aligned}
		&(\theta^{k+1},\xi^{k+1})\\
		&\! \approx \! \underset{\theta\in \mathbb{R}^n,\xi\in \mathbb{R}^{dn}}{\arg\min}  \!\Big\{\Phi_k(\theta,\xi)\!=\!\mathcal{L}_{\sigma_k}(\theta,\xi;u^k,v^k)\!+
\!\frac{1}{2\sigma_k}\|\theta-\theta^k\|_{H_1}^2\!+\!\frac{1}{2\sigma_k}\|\xi-\xi^k\|_{H_2}^2\Big\}
		\end{aligned}
		\end{eqnarray}
		such that the approximate solution $(\theta^{k+1},\xi^{k+1})$ satisfies the following stopping criterion:
		\begin{align}
		\|\nabla \Phi_k(\theta^{k+1},\xi^{k+1})\|&\leq \frac{\sqrt{\lambda_{\min}}}{\sigma_k}\varepsilon_k, \tag{A}\label{A}
		\end{align}
		where $\lambda_{\min} =\min\{\lambda_{\min}(H_1),\lambda_{\min}(H_2),1 \}$.
		\\[3pt]
		\STATE {\bfseries Step 2}. Update $u$, $v$ by
		\begin{align}
		&u^{k+1}=-\sigma_k\Big[ A\theta^{k+1}+B\xi^{k+1}-u^k/\sigma^k-\Pi_{+}(A\theta^{k+1}+B\xi^{k+1}-u^k/\sigma^k)\Big],\notag\\
		&v^{k+1}=-\sigma_k \Big[ \xi^{k+1}-v^k/\sigma_k-{\rm Prox}_p(\xi^{k+1}-v^k/\sigma^k) \Big].\notag
		\end{align}
		\\[3pt]
		\STATE {\bfseries Step 3}. Update $\sigma_{k+1} \uparrow \sigma_{\infty} \leq \infty$.
		\UNTIL{Stopping criterion is satisfied.}
	\end{algorithmic}
\end{algorithm}

\subsection{Convergence results for the {\tt proxALM}}
\label{subsec:convergencepALM}
Define the maximal monotone operator $\mathcal{T}_{l}$ as
\begin{align*}
\mathcal{T}_{l}(\theta,\xi,u,v)=\Big\{(\theta',\xi',u',v')\mid(\theta',\xi',-u',-v')\in \partial l(\theta,\xi,u,v)
\Big\},
\end{align*}
and the block diagonal operator $\Sigma = {\rm Diag}(H_1,H_2,I_{n^2},I_{dn})$. Note that the solution set of the KKT system \eqref{KKT_system} is exactly $\mathcal{T}_{l}^{-1}(0)$.

We follow the idea of \cite[Theorem 2.3 and Theorem 2.5]{li2020asymptotically} to get the following convergence results of Algorithm \ref{alg:pALM}, where the details of the proof are omitted here.

\begin{theorem}\label{thm:convergence_pALM}
	Suppose that the solution set to the KKT conditions \eqref{KKT_system} is nonempty, that is $\Lambda:=\mathcal{T}_{l}^{-1}(0)\neq \emptyset$.\\
	(1) Let $\{(\theta^{k},\xi^{k},u^{k},v^{k})\}$ be the infinite sequence generated by Algorithm \ref{alg:pALM}. Then $\{(\theta^{k},\xi^{k},u^{k},v^{k})\}$ is bounded,  $\{(\theta^{k},\xi^{k})\}$ converges to an optimal solution of \eqref{reformulated_problem}, and $\{(u^{k},v^{k})\}$ converges to an optimal solution of \eqref{D}.\\
	(2) Let $r:=\sum_{k=0}^{\infty}\varepsilon_k+{\rm dist}_{\Sigma}((\theta^0,\xi^0,u^0,v^0),\Lambda)$. Assume that for this $r>0$, there exists a constant $\kappa>0$ such that $\mathcal{T}_{l}$ satisfies the following error bound condition: for all $(\theta,\xi,u,v)$ satisfying ${\rm dist}((\theta,\xi,u,v),\Lambda)\leq r$, it holds that
	\begin{equation}\label{error_bound}
	{\rm dist}((\theta,\xi,u,v),\Lambda)\leq \kappa {\rm dist}(0,\mathcal{T}_{l}(\theta,\xi,u,v)).
	\end{equation}
	Suppose that $\{(\theta^{k},\xi^{k},u^{k},v^{k})\}$ is the sequence generated by Algorithm \ref{alg:pALM}, where in \textbf{Step 1}, the approximate solution $(\theta^{k+1},\xi^{k+1})$ also satisfies the stopping criterion
	\begin{align}
	\|\nabla \Phi_k(\theta^{k+1},\xi^{k+1})\|\leq \frac{\delta_k\sqrt{\lambda_{\min}}}{\sigma_k}\|(\theta^{k+1},\xi^{k+1},u^{k+1},v^{k+1})-(\theta^{k},\xi^{k},u^{k},v^{k}) \|_{\Sigma},\tag{B}\label{B}
	\end{align}
	and $\{\delta_k\mid 0\leq \delta_k<1\}$ is a given summable sequence. Then it holds for all $k\geq 0$ that
	\begin{equation}\label{eq:rate_pppa}
	{\rm dist}_{\Sigma}((\theta^{k+1},\xi^{k+1},u^{k+1},v^{k+1}),\Lambda)\leq \mu_k {\rm dist}_{\Sigma}((\theta^{k},\xi^{k},u^{k},v^{k}),\Lambda),
	\end{equation}
	where
	\begin{align*}
	\mu_k=\frac{1}{1-\delta_k}\Big( \delta_k+\frac{(1+\delta_k)\kappa \lambda_{\max} }{\sqrt{\sigma_k^2+\kappa^2\lambda_{\max}^2}}\Big)\rightarrow \mu_{\infty}=\frac{\kappa\lambda_{\max}}{\sqrt{\sigma_{\infty}^2+\kappa^2\lambda_{\max}^2}}<1,
	\end{align*}
	and $\lambda_{\max} = \max\{\lambda_{\max}(H_1),\lambda_{\max}(H_2),1\}$.
\end{theorem}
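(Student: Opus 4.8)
The plan is to recognize Algorithm~\ref{alg:pALM} as an inexact preconditioned proximal point algorithm (PPPA) applied to the maximal monotone operator $\mathcal{T}_l$ in the metric induced by $\Sigma={\rm Diag}(H_1,H_2,I_{n^2},I_{dn})$, and then to run the general convergence theory of such schemes, exactly along the lines of \cite[Theorems~2.3 and~2.5]{li2020asymptotically} and the classical analysis in \cite{rockafellar1976monotone}. Writing $w=(\theta,\xi,u,v)$, the target inclusion is $0\in\mathcal{T}_l(w)$, whose solution set is $\Lambda=\mathcal{T}_l^{-1}(0)$. Since $\mathcal{T}_l$ is maximal monotone and $\Sigma\succ 0$, the operator $\Sigma^{-1}\mathcal{T}_l$ is maximal monotone with respect to $\langle\cdot,\cdot\rangle_\Sigma$, so the resolvent $P_k:=(\mathcal{I}+\sigma_k\Sigma^{-1}\mathcal{T}_l)^{-1}$ is single-valued and firmly nonexpansive in the $\Sigma$-metric; this is the object that drives the whole analysis.

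The first, and \emph{main}, step is to show that one cycle of Steps~1--2 realizes an inexact resolvent step $w^{k+1}\approx P_k(w^k)$. Using the Moreau-envelope form of $\mathcal{L}_{\sigma_k}$ together with the multiplier formulas in Step~2, I would verify that $(u^{k+1},v^{k+1})$ satisfy the two subdifferential inclusions of \eqref{KKT_system} exactly at $w^{k+1}$ (this is precisely what the projection/proximal structure of the updates in Step~2 is designed to guarantee), so that the residual of the proximal inclusion $0\in\mathcal{T}_l(w^{k+1})+\tfrac{1}{\sigma_k}\Sigma(w^{k+1}-w^k)$ is concentrated in the $(\theta,\xi)$ block and equals exactly $\nabla\Phi_k(\theta^{k+1},\xi^{k+1})$. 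Comparing $w^{k+1}$ with the exact step $\bar w^{k+1}:=P_k(w^k)$ through the monotonicity of $\mathcal{T}_l$ then gives $\tfrac{1}{\sigma_k}\|w^{k+1}-\bar w^{k+1}\|_\Sigma^2\le\langle\nabla\Phi_k(\theta^{k+1},\xi^{k+1}),\,w^{k+1}-\bar w^{k+1}\rangle$, and since $\lambda_{\min}(\Sigma)=\lambda_{\min}$ this yields the norm conversion $\|w^{k+1}-\bar w^{k+1}\|_\Sigma\le\frac{\sigma_k}{\sqrt{\lambda_{\min}}}\,\|\nabla\Phi_k(\theta^{k+1},\xi^{k+1})\|$. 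Plugging in \eqref{A} gives $\|w^{k+1}-\bar w^{k+1}\|_\Sigma\le\varepsilon_k$ with $\sum_k\varepsilon_k<\infty$, while \eqref{B} gives the relative error $\|w^{k+1}-\bar w^{k+1}\|_\Sigma\le\delta_k\|w^{k+1}-w^k\|_\Sigma$. Verifying the exact inclusion identities for the dual blocks and tracking the eigenvalue constants through these conversions is where the real work lies.

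For Part~(1), with the absolute criterion \eqref{A} in hand I would invoke the standard inexact PPA argument: firm nonexpansiveness of $P_k$ in the $\Sigma$-metric together with the summable errors $\{\varepsilon_k\}$ produces a quasi-Fej\'er inequality ${\rm dist}_\Sigma(w^{k+1},\Lambda)\le{\rm dist}_\Sigma(w^k,\Lambda)+\varepsilon_k$, whence $\{w^k\}$ is bounded and its $\Sigma$-distance to $\Lambda$ converges; asymptotic regularity ($\|w^{k+1}-w^k\|_\Sigma\to 0$) plus closedness of $\mathcal{T}_l$ then forces every cluster point into $\Lambda$, so the full sequence converges to a single point $w^\infty\in\Lambda=\mathcal{T}_l^{-1}(0)$. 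Reading off the blocks of $w^\infty$ and recalling the primal--dual setup behind \eqref{reformulated_problem}--\eqref{D} shows that $(\theta^k,\xi^k)$ converges to an optimal solution of \eqref{reformulated_problem} and $(u^k,v^k)$ to an optimal solution of \eqref{D}.

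For the rate in Part~(2) I would first analyze the \emph{exact} step. Set $\bar z^{k+1}=\tfrac{1}{\sigma_k}\Sigma(w^k-\bar w^{k+1})\in\mathcal{T}_l(\bar w^{k+1})$; the error bound \eqref{error_bound} gives ${\rm dist}(\bar w^{k+1},\Lambda)\le\kappa\|\bar z^{k+1}\|$, which after converting Euclidean to $\Sigma$-norm on both sides (each conversion contributing a factor $\sqrt{\lambda_{\max}}$) reads ${\rm dist}_\Sigma(\bar w^{k+1},\Lambda)\le\frac{\kappa\lambda_{\max}}{\sigma_k}\|w^k-\bar w^{k+1}\|_\Sigma$. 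Combining this with the Pythagorean inequality ${\rm dist}_\Sigma^2(w^k,\Lambda)\ge{\rm dist}_\Sigma^2(\bar w^{k+1},\Lambda)+\|w^k-\bar w^{k+1}\|_\Sigma^2$ coming from firm nonexpansiveness yields the exact-step contraction ${\rm dist}_\Sigma(\bar w^{k+1},\Lambda)\le\frac{\kappa\lambda_{\max}}{\sqrt{\sigma_k^2+\kappa^2\lambda_{\max}^2}}\,{\rm dist}_\Sigma(w^k,\Lambda)$, which is precisely $\mu_\infty$ in the limit. Finally I would fold in the relative inexactness from \eqref{B} via ${\rm dist}_\Sigma(w^{k+1},\Lambda)\le{\rm dist}_\Sigma(\bar w^{k+1},\Lambda)+\delta_k\|w^{k+1}-w^k\|_\Sigma$, bound $\|w^{k+1}-w^k\|_\Sigma$ in terms of the distances at steps $k$ and $k+1$, and solve the resulting recursion to obtain the stated factor $\mu_k=\frac{1}{1-\delta_k}\big(\delta_k+\frac{(1+\delta_k)\kappa\lambda_{\max}}{\sqrt{\sigma_k^2+\kappa^2\lambda_{\max}^2}}\big)$, with $\mu_k\to\mu_\infty<1$ since $\delta_k\to 0$ and $\sigma_k\uparrow\sigma_\infty$. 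Throughout Part~(2) the definition $r=\sum_k\varepsilon_k+{\rm dist}_\Sigma(w^0,\Lambda)$, combined with the Part~(1) quasi-Fej\'er bound, guarantees that all iterates and exact steps stay within the region where \eqref{error_bound} is valid, so the contraction applies at every $k$.
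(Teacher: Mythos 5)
Your proposal is correct and follows essentially the same route as the paper, which itself gives no details and simply invokes the inexact preconditioned proximal point framework of \cite[Theorems 2.3 and 2.5]{li2020asymptotically}: identify one cycle of Steps 1--2 as an inexact resolvent step for $\mathcal{T}_l$ in the $\Sigma$-metric with residual $\nabla\Phi_k$ confined to the $(\theta,\xi)$ block, convert criteria \eqref{A} and \eqref{B} into the summable absolute and relative error conditions, and then apply the standard quasi-Fej\'er and error-bound contraction arguments. Your reconstruction of the key estimate $\|w^{k+1}-\bar w^{k+1}\|_{\Sigma}\leq \sigma_k\lambda_{\min}^{-1/2}\|\nabla\Phi_k(\theta^{k+1},\xi^{k+1})\|$ and of the exact-step contraction factor $\kappa\lambda_{\max}/\sqrt{\sigma_k^2+\kappa^2\lambda_{\max}^2}$ matches the constants in the theorem, so no gap.
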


As one can see from Theorem \ref{thm:convergence_pALM}, the fast linear convergence rate of Algorithm \ref{alg:pALM} depends on the error bound condition \eqref{error_bound} for the maximal monotone operator $\mathcal{T}_{l}$. For specifying whether the error bound condition \eqref{error_bound} holds for different choices of the closed convex set $\mathcal{D}$, we give the following remark.
\begin{remark}\label{remark:Tl}
	It is well known that any polyhedral multifunction is upper Lipschitz continuous at every point of its domain according to \cite{robinson1981some}, which means it satisfies the error bound condition \eqref{error_bound} for any $r>0$. For the cases when $\mathcal{D}$ is a polyhedral set, e.g. $\mathcal{D}=\mathbb{R}_{+}^d(\mathbb{R}_{-}^d)$ or $\mathcal{D}=\{x\in \mathbb{R}^d\mid \|x\|_q\leq L\}$ with $q=1$ or $q=\infty$, $\mathcal{T}_{l}$ is a polyhedral multifunction, and hence it satisfies the error bound condition \eqref{error_bound}. In general, one needs addtional assumptions such as partial complementarity for the error bound condition \eqref{error_bound} to hold with the presence of nonpolyhedral constraints.
\end{remark}

\subsection{A semismooth Newton method for solving the {\tt proxALM} subproblems}
\label{subsec:ssn}
One can see that the most computationally intensive step in the {\tt proxALM} is in solving the subproblem \eqref{eq:solve_thetaxi}. Here we describe how it can be solved efficiently by the semismooth Newton method. For any given $\sigma>0$, $(\tilde{\theta},\tilde{\xi},\tilde{u},\tilde{v})\in \mathbb{R}^n\times \mathbb{R}^{dn}\times \mathbb{R}^{n^2}\times \mathbb{R}^{dn}$, we aim to solve the {\tt proxALM} subproblem, which has the form:
\begin{align}
\min_{\theta\in \mathbb{R}^n,\xi\in\mathbb{R}^{dn}}\ \Big\{ \Phi(\theta,\xi):=\mathcal{L}_{\sigma}(\theta,\xi;\tilde{u},\tilde{v})+\frac{1}{2\sigma}\|\theta-\tilde{\theta}\|_{H_1}^2+\frac{1}{2\sigma}\|\xi-\tilde{\xi}\|_{H_2}^2\Big\}.\label{eq:pALMsub}
\end{align}
Since $\Phi(\cdot,\cdot)$ is strongly convex, the above minimization problem admits a unique solution $(\bar{\theta},\bar{\xi})$, which can be computed by solving the nonsmooth equation
\begin{align}\label{eq:nablapsi}
\nabla \Phi(\theta,\xi)=0,
\end{align}
where
\begin{align*}
\nabla \Phi(\theta,\xi)&=\left(
\begin{aligned}
\sigma A^T \Big[A\theta+B\xi-\frac{\tilde{u}}{\sigma}-\Pi_{+}(A\theta+B\xi-\frac{\tilde{u}}{\sigma})\Big]\\
\sigma B^T \Big[A\theta+B\xi-\frac{\tilde{u}}{\sigma}-\Pi_{+}(A\theta+B\xi-\frac{\tilde{u}}{\sigma})\Big]
\end{aligned}\right)\\
&\qquad +
\left(
\begin{aligned}
&\theta-Y+\frac{1}{\sigma}H_1(\theta-\tilde{\theta})\\
\sigma\Big[\xi-\frac{\tilde{v}}{\sigma}-&{\rm Prox}_p(\xi-\frac{\tilde{v}}{\sigma})
\Big]+\frac{1}{\sigma}H_2(\xi-\tilde{\xi})
\end{aligned}\right).
\end{align*}

In order to apply the {\tt SSN} to solve the above nonsmooth equation, we need a suitable generalized Jacobian of $\nabla \Phi(\cdot,\cdot)$. Here we choose the following set as the candidate:
\begin{align*}
\hat{\partial}^2\Phi(\theta,\xi)&=\sigma \left(\begin{aligned}
A^T\\
B^T
\end{aligned}\right)\Big[I_{n^2}-\partial \Pi_{+}(A\theta+B\xi-\frac{\tilde{u}}{\sigma})\Big]\left(\begin{aligned}
&A & B
\end{aligned}\right)\\
&\quad +\left(\begin{aligned}
I_n+&\frac{1}{\sigma}H_1\\
&0
\end{aligned}\right.
\left.\begin{aligned}
&0\\
\sigma \Big[ I_{dn}-\partial{\rm Prox}_p&(\xi-\frac{\tilde{v}}{\sigma})\Big]+\frac{1}{\sigma}H_2
\end{aligned}\right),
\end{align*}
where $\partial \Pi_{+}$ is the Clarke generalized Jacobian of $\Pi_{+}(\cdot)$ defined as
\begin{align*}
\partial \Pi_{+}(\eta) =
\left\{ {\rm Diag}(q) \left |
\begin{array}{ll}
q_i=0 & \mbox{if $\eta_i < 0$}
\\
q_i\in [0,1]  & \mbox{if $\eta_i = 0$}
\\
q_i =1 & \mbox{otherwise}
\end{array}\right.
\right\},\quad \forall \eta\in \mathbb{R}^{n^2} 
\end{align*}	
and $\partial{\rm Prox}_p$ is the Clarke generalized Jacobian of ${\rm Prox}_p$ which will be described in Section \ref{sec:proximalmapping}.

We give the following proposition to identify the strong semismoothness of $\nabla \Phi(\cdot,\cdot)$ with respect to $\hat{\partial}^2 \Phi(\cdot,\cdot)$, where the definition of strong semismoothness could be found in \cite{mifflin1977semismooth,kummer1988newton,qi1993nonsmooth,sun2002semismooth}.
\begin{proposition}\label{prop: semismooth_of_nablaPhi}
Suppose that 
${\rm Prox}_{p} (\cdot)$ is strongly semismooth with respect to the Clarke generalized Jacobian $\partial{\rm Prox}_{p} (\cdot)$. Then $\nabla \Phi(\cdot,\cdot)$ is strongly semismooth with respect to $\hat{\partial}^2 \Phi(\cdot,\cdot)$.
\end{proposition}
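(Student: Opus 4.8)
The plan is to establish the result through the standard calculus of strongly semismooth functions, exploiting the fact that $\nabla \Phi$ is assembled by composing and adding only a small number of elementary maps. I would rely on three facts that are standard for strong semismoothness (see \cite{mifflin1977semismooth,kummer1988newton,qi1993nonsmooth,sun2002semismooth}): (i) every affine map is strongly semismooth, with its single constant Jacobian serving as the candidate; (ii) the sum of finitely many strongly semismooth maps is strongly semismooth, with candidate generalized Jacobian given by the Minkowski sum of the individual candidates; and (iii) if $F_2$ is strongly semismooth at $x$ with respect to $\mathcal{K}_2$ and $F_1$ is strongly semismooth at $F_2(x)$ with respect to $\mathcal{K}_1$, then $F_1\circ F_2$ is strongly semismooth at $x$ with respect to the composite multifunction $\{V_1V_2\mid V_1\in\mathcal{K}_1(F_2(x)),\,V_2\in\mathcal{K}_2(x)\}$.

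First I would isolate the two genuinely nonsmooth ingredients of $\nabla\Phi$. The first is $\Pi_{+}$, the Euclidean projection onto $\mathbb{R}^{n^2}_{+}$; since it is separable with $(\Pi_{+}(\eta))_i=\max\{\eta_i,0\}$, it is piecewise affine and hence strongly semismooth with respect to the Clarke generalized Jacobian $\partial\Pi_{+}$ displayed above. The second is ${\rm Prox}_{p}$, which is strongly semismooth with respect to $\partial{\rm Prox}_{p}$ by hypothesis. Both enter $\nabla\Phi$ only through affine arguments, namely $A\theta+B\xi-\tilde u/\sigma$ and $\xi-\tilde v/\sigma$, each of which is strongly semismooth by (i).

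Next I would assemble the pieces. For the projection block, write it as the composition $L\circ(\mathrm{Id}-\Pi_{+})\circ g$, where $g(\theta,\xi)=A\theta+B\xi-\tilde u/\sigma$ has Jacobian $(A,\,B)$ and $L(z)=\sigma(A^T;B^T)z$ is linear; applying (iii) twice shows this block is strongly semismooth with composite candidate $\sigma(A^T;B^T)[I_{n^2}-\partial\Pi_{+}(A\theta+B\xi-\tilde u/\sigma)](A,\,B)$, which is exactly the first summand of $\hat{\partial}^2\Phi$. For the proximal block, the map $\xi\mapsto\sigma[\xi-\tilde v/\sigma-{\rm Prox}_p(\xi-\tilde v/\sigma)]$ is strongly semismooth with candidate $\sigma[I_{dn}-\partial{\rm Prox}_p(\xi-\tilde v/\sigma)]$ by (i) and (iii). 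The remaining terms $\theta-Y+\frac{1}{\sigma}H_1(\theta-\tilde\theta)$ and $\frac{1}{\sigma}H_2(\xi-\tilde\xi)$ are affine, with constant Jacobians $I_n+\frac{1}{\sigma}H_1$ and $\frac{1}{\sigma}H_2$. Adding all the blocks via (ii) and collecting the constant contributions into the block-diagonal part reproduces precisely the candidate $\hat{\partial}^2\Phi(\theta,\xi)$, so strong semismoothness of $\nabla\Phi$ with respect to $\hat{\partial}^2\Phi$ follows.

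I expect the only delicate point to be the bookkeeping in (iii): one must verify that the composite multifunction generated by the chain rule coincides with (or is contained in) the explicitly written $\hat{\partial}^2\Phi$, rather than merely with the full Clarke Jacobian of the composition, since the chain rule for Clarke Jacobians is in general only an inclusion. The cleanest route is to phrase composition directly at the level of candidate multifunctions, working with an arbitrary selection $V\in\hat{\partial}^2\Phi(\theta,\xi)$ and tracking the second-order residual $F(x+h)-F(x)-Vh=O(\|h\|^2)$ through each elementary map, so that the match between the candidate set and $\hat{\partial}^2\Phi$ is transparent and the quadratic order is preserved at every step.
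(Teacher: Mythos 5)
Your proposal is correct and follows essentially the same route as the paper's proof: both isolate $\Pi_{+}$ (piecewise affine, hence strongly semismooth) and ${\rm Prox}_p$ (strongly semismooth by hypothesis), observe that they enter $\nabla\Phi$ only through affine arguments, and verify the defining second-order expansion directly against the explicit candidate $\hat{\partial}^2\Phi$ --- which is precisely the ``residual-tracking'' you yourself identify as the cleanest route, so the calculus-rule framing is only cosmetic. One correction to make when you write it out: in your final paragraph the estimate $F(x+h)-F(x)-Vh=O(\|h\|^2)$ is stated with $V\in\hat{\partial}^2\Phi(\theta,\xi)$, but (strong) semismoothness requires $V$ to be selected from the candidate multifunction at the \emph{perturbed} point, as the paper does when it takes ${\cal H}\in\hat{\partial}^2\Phi(\theta+\Delta\theta,\xi+\Delta\xi)$ with the corresponding $P$ and $Q$; with $V$ fixed at the base point the estimate is generally false already for $t\mapsto\max\{t,0\}$ at $t=0$.
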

\begin{proof}
	By the definition of $\partial \Pi_{+}(\cdot)$, we can see that $\partial \Pi_{+}(\cdot)$ is nonempty, compact valued, and upper-semicontinuous. Together with the property of $\partial{\rm Prox}_{p} (\cdot)$, it could be seen that the multifunction $\hat{\partial}^2 \Phi(\cdot,\cdot)$ is nonempty, compact valued, and upper-semicontinuous.
	
	Note that for any $(\theta,\xi)\in \mathbb{R}^n\times \mathbb{R}^{dn}$, $\nabla \Phi(\cdot,\cdot)$ is directionally differentiable at $(\theta,\xi)$. Let $(\Delta \theta,\Delta\xi)\in \mathbb{R}^n\times \mathbb{R}^{dn}$ be such that $\|(\Delta \theta,\Delta\xi)\|$ is sufficiently small. Let ${\cal H}\in \hat{\partial}^2 \Phi(\theta+\Delta \theta,\xi+\Delta \xi)$, then by definition, there exists $P\in \partial \Pi_{+}(A(\theta+\Delta \theta)+B(\xi+\Delta\xi)-\tilde{u}/\sigma)$ and $Q\in \partial {\rm Prox}_p(\xi+\Delta\xi-\tilde{v}/\sigma)$ such that
	\begin{align*}
	{\cal H}=\sigma \left(\begin{aligned}
	A^T\\
	B^T
	\end{aligned}\right)\Big(I_{n^2}-P\Big)\left(\begin{aligned}
	&A & B
	\end{aligned}\right)+\left(\begin{aligned}
	I_n+&\frac{1}{\sigma}H_1\\
	&0
	\end{aligned}\right.
	\left.\begin{aligned}
	&0\\
	\sigma \Big[ I_{dn}-Q&\Big]+\frac{1}{\sigma}H_2
	\end{aligned}\right).
	\end{align*}
	Since $\Pi_{+}(\cdot)$ is piecewise affine, we know that
	\begin{align*}
	\Pi_{+}(A(\theta+\Delta \theta)+B(\xi+\Delta\xi)-\tilde{u}/\sigma)=\Pi_{+}(A\theta+B\xi-\tilde{u}/\sigma)+P(\Delta \theta;\Delta \xi).
	\end{align*}
	By the strong semismoothness of ${\rm Prox}_p(\cdot)$ with respect to $\partial {\rm Prox}_p(\cdot)$, we have that
	\begin{align*}
	{\rm Prox}_p(\xi+\Delta\xi-\tilde{v}/\sigma) = {\rm Prox}_p(\xi-\tilde{v}/\sigma)+Q\Delta\xi +O(\|\Delta\xi\|^2).
	\end{align*}
	Therefore, it holds that
	\begin{align*}
	\nabla \Phi(\theta+\Delta\theta,\xi+\Delta\xi)-\nabla \Phi(\theta,\xi)-{\cal H}(\Delta\theta,\Delta\xi)
	=O(\|(\Delta\theta,\Delta\xi)\|^2),
	\end{align*}
	which means $\nabla \Phi(\cdot,\cdot)$ is strongly semismooth with respect to $\hat{\partial}^2 \Phi(\cdot,\cdot)$. \qed
\end{proof}

With the suitably chosen generalized Jacobian $\hat{\partial}^2 \Phi(\cdot,\cdot)$, we can design the semismooth Newton method in Algorithm \ref{alg:ssn}, which is a generalization of the standard Newton method, for solving \eqref{eq:pALMsub}.

\begin{algorithm}
	\caption{: Semismooth Newton method for \eqref{eq:pALMsub}}
	\label{alg:ssn}
	\begin{algorithmic}[1]
		\STATE {\bfseries Initialization:} Given $(\theta^0,\xi^0)\in \mathbb{R}^n\times \mathbb{R}^{dn}$, $\bar{\gamma}\in (0, 1)$, $\tau \in (0, 1]$, $\delta \in (0, 1)$, and $\mu \in (0, 1/2)$. For $j = 0, 1, 2, \dots$
		\REPEAT
		\STATE {\bfseries Step 1}. Select an element $\mathcal{H}_j \in \hat{\partial}^{2} \Phi(\theta^j,\xi^j)$. Apply a direct method or the preconditioned conjugate gradient ({\tt PCG}) method to find an approximate solution $(\Delta \theta^j,\Delta \xi^j)\in \mathbb{R}^{n}\times \mathbb{R}^{dn}$ to
		\begin{equation}\label{eq: cg-system}
		\mathcal{H}_j(\Delta \theta^j,\Delta \xi^j) \approx - \nabla \Phi(\theta^j,\xi^j),
		\end{equation}
		such that $R_j:=\mathcal{H}_j(\Delta \theta^j,\Delta \xi^j)+\nabla \Phi(\theta^j,\xi^j)$ satisfies $\|R_j\| \leq \min(\bar{\gamma}, \|\nabla\Phi(\theta^j,\xi^j)\|^{1+\tau})$.
		\\[3pt]
		\STATE {\bfseries Step 2}. Set $\alpha_j = \delta^{m_j}$, where $m_j$ is the smallest nonnegative integer $m$ such that
		$$\Phi(\theta^j + \delta^m \Delta \theta^j,\xi^j + \delta^m \Delta \xi^j) \leq \Phi(\theta^j,\xi^j) + \mu\delta^m \langle \nabla\Phi(\theta^j,\xi^j), (\Delta \theta^j,\Delta \xi^j)\rangle .$$
		\\[3pt]
		\STATE{\bfseries Step 3}. Set $\theta^{j+1} = \theta^j + \alpha_j \Delta \theta^j$, $\xi^{j+1}=\xi^j+\alpha_j \Delta \xi^j$.
		\UNTIL{Stopping criterion \eqref{A} or criterion \eqref{B} based on $\theta^{j+1}$ and $\xi^{j+1}$ is satisfied.}
	\end{algorithmic}
\end{algorithm}

The convergence analysis for Algorithm \ref{alg:ssn} can be established as follows.
\begin{theorem}\label{thm:convergenceSSN}
Suppose that 
${\rm Prox}_{p} (\cdot)$ is strongly semismooth with respect to \\
$\partial{\rm Prox}_{p} (\cdot)$. Let $\{(\theta^{j},\xi^{j})\}$ be the infinite sequence generated by Algorithm \ref{alg:ssn}. Then, $\{(\theta^{j},\xi^{j})\}$ converges to the unique optimal solution $(\bar{\theta},\bar{\xi})$ of problem \eqref{eq:pALMsub}, and
	\begin{align*}
	\|(\theta^{j+1},\xi^{j+1})-(\bar{\theta},\bar{\xi})\|=O(\|(\theta^{j},\xi^{j})-(\bar{\theta},\bar{\xi})\|^{1+\tau}).
	\end{align*}
\end{theorem}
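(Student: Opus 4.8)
The plan is to follow the standard template for proving global and fast local convergence of inexact semismooth Newton methods with line search, as developed in \cite{qi1993nonsmooth,sun2002semismooth}. The three structural ingredients that drive the argument are: (i) $\Phi(\cdot,\cdot)$ is strongly convex, so that $\nabla\Phi=0$ has a unique root $(\bar\theta,\bar\xi)$, which is the global minimizer of \eqref{eq:pALMsub}; (ii) $\nabla\Phi(\cdot,\cdot)$ is strongly semismooth with respect to $\hat\partial^2\Phi(\cdot,\cdot)$, which is exactly Proposition \ref{prop: semismooth_of_nablaPhi}; and (iii) every element of $\hat\partial^2\Phi$ is symmetric positive definite, and in fact uniformly so. I would establish (iii) first, since it underpins everything else. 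Take any $\mathcal{H}\in\hat\partial^2\Phi(\theta,\xi)$ with associated $P\in\partial\Pi_{+}(\cdot)$ and $Q\in\partial{\rm Prox}_p(\cdot)$. Since $P={\rm Diag}(q)$ with $q_i\in[0,1]$, we have $0\preceq I_{n^2}-P\preceq I_{n^2}$, so the first summand $\sigma\begin{pmatrix}A^T\\ B^T\end{pmatrix}(I_{n^2}-P)\begin{pmatrix}A&B\end{pmatrix}$ is positive semidefinite. Because ${\rm Prox}_p$ is the proximal mapping of a closed proper convex function, its generalized Jacobian elements $Q$ are symmetric with eigenvalues in $[0,1]$, so $I_{dn}-Q\succeq0$. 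Combining these with the $I_n+\tfrac1\sigma H_1$ and $\tfrac1\sigma H_2$ terms gives the positive definite lower bound
\[
\mathcal{H}\succeq {\rm Diag}\Big(I_n+\tfrac1\sigma H_1,\ \tfrac1\sigma H_2\Big)\succ0,
\]
which is independent of $(\theta,\xi)$. Hence each Newton system \eqref{eq: cg-system} is consistent and solvable, the inexactness tolerance $\|R_j\|\le\min(\bar\gamma,\|\nabla\Phi\|^{1+\tau})$ is attainable, and each computed $(\Delta\theta^j,\Delta\xi^j)$ is a genuine descent direction.

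For global convergence I would then use the descent property together with the Armijo line search in \textbf{Step 2}: $\{\Phi(\theta^j,\xi^j)\}$ is monotonically decreasing, and strong convexity of $\Phi$ makes its sublevel sets compact, so $\{(\theta^j,\xi^j)\}$ is bounded. Any accumulation point is a stationary point of $\Phi$, and by strong convexity there is exactly one such point, namely $(\bar\theta,\bar\xi)$; hence the entire sequence converges to $(\bar\theta,\bar\xi)$. Under the uniform positive definiteness established above, this is precisely the global convergence conclusion for line-search semismooth Newton methods, which I would invoke directly from \cite{qi1993nonsmooth,sun2002semismooth}.

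For the local rate I would first show that the unit step is eventually accepted, and then combine the semismoothness estimate with the residual tolerance. Once $(\theta^j,\xi^j)$ is near $(\bar\theta,\bar\xi)$, the uniform nonsingularity of $\mathcal{H}_j$ and the strong semismoothness of $\nabla\Phi$ yield
\[
\|(\theta^j,\xi^j)+(\Delta\theta^j,\Delta\xi^j)-(\bar\theta,\bar\xi)\|=O(\|(\theta^j,\xi^j)-(\bar\theta,\bar\xi)\|^{1+\tau}),
\]
where the inherent second-order semismooth error is degraded to order $1+\tau$ only through the inexact-solve tolerance $\|R_j\|\le\|\nabla\Phi(\theta^j,\xi^j)\|^{1+\tau}$ together with the Lipschitz bound $\|\nabla\Phi(\theta^j,\xi^j)\|=O(\|(\theta^j,\xi^j)-(\bar\theta,\bar\xi)\|)$. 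A standard estimate then shows that for $\mu\in(0,1/2)$ the Armijo test holds with $m_j=0$ for all large $j$, so $\alpha_j=1$ eventually, and the claimed superlinear rate of order $1+\tau$ follows.

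The part I expect to require the most care is this transition-to-unit-step argument, together with the bookkeeping that converts strong semismoothness (a genuinely second-order estimate) and the inexactness tolerance into the clean rate $1+\tau$; by contrast, the positive-definiteness bound is purely structural and the global convergence step is a direct citation of the general semismooth Newton theory.
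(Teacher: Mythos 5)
Your proposal is correct and follows essentially the same route as the paper's proof: positive definiteness of every element of $\hat{\partial}^2\Phi$ (guaranteed by $H_1,H_2\succ 0$), global convergence of the line-search semismooth Newton iteration to the unique minimizer of the strongly convex $\Phi$, the estimate $\|(\theta^{j},\xi^{j})+(\Delta \theta^j,\Delta \xi^j)-(\bar{\theta},\bar{\xi})\|=O(\|(\theta^{j},\xi^{j})-(\bar{\theta},\bar{\xi})\|^{1+\tau})$ obtained by combining uniform boundedness of $\|\mathcal{H}_j^{-1}\|$, the inexactness tolerance on $R_j$, and strong semismoothness of $\nabla\Phi$, and finally eventual acceptance of the unit step for $\mu\in(0,1/2)$. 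The only differences are cosmetic (your explicit lower bound $\mathcal{H}\succeq {\rm Diag}(I_n+\sigma^{-1}H_1,\sigma^{-1}H_2)$ versus the paper's appeal to \cite[Lemma 7.5.2]{facchinei2007finite}, and citing \cite{qi1993nonsmooth,sun2002semismooth} where the paper cites \cite{zhao2010newton} and \cite{li2018efficiently}).
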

\begin{proof}
	According to Proposition \ref{prop: semismooth_of_nablaPhi}, we have that $\nabla \Phi(\cdot,\cdot)$ is strongly semismooth with respect to $\hat{\partial}^2 \Phi(\cdot,\cdot)$. From \cite[Proposition 3.3 and Theorem 3.4]{zhao2010newton}, we can see that $\{(\theta^j,\xi^j)\}$ converges to the unique optimal solution $(\bar{\theta},\bar{\xi})$. By the formulation of $\hat{\partial}^2\Phi(\cdot,\cdot)$, we have that all the elements in $\hat{\partial}^2\Phi(\theta,\xi)$ are symmetric and positive definite for any $(\theta,\xi)\in\mathbb{R}^{n}\times\mathbb{R}^{dn}$ due to the positive definiteness of $H_1$ and $H_2$. Then for sufficiently large $j$, we have that $\{\|{\cal H}_j^{-1}\|\}$ is uniformly bounded from \cite[Lemma 7.5.2]{facchinei2007finite}, and thus
	\begin{align}
	&\|(\theta^{j},\xi^{j})+(\Delta \theta^j,\Delta \xi^j)-(\bar{\theta},\bar{\xi})\|=
	\|(\theta^{j},\xi^{j})-(\bar{\theta},\bar{\xi})+{\cal H}_j^{-1}(R_j-\nabla \Phi(\theta^j,\xi^j))\|\notag\\
	&\leq \|{\cal H}_j^{-1}\|\Big( \|\nabla\Phi(\theta^j,\xi^j)\|^{1+\tau}+\|{\cal H}_j((\theta^{j},\xi^{j})-(\bar{\theta},\bar{\xi}))-\nabla\Phi(\theta^j,\xi^j)\|\Big)\notag\\
	&= O(\|\nabla\Phi(\theta^j,\xi^j)-\nabla\Phi(\bar{\theta},\bar{\xi})\|^{1+\tau})\notag\\
	&\quad +O(\|\nabla\Phi(\theta^j,\xi^j)-\nabla\Phi(\bar{\theta},\bar{\xi})-{\cal H}_j((\theta^{j},\xi^{j})-(\bar{\theta},\bar{\xi}))\|)\notag\\
	&=O(\|(\theta^{j},\xi^{j})-(\bar{\theta},\bar{\xi})\|^{1+\tau}),
	\label{eq: onetau}
	\end{align}
	where we have used the strong semismoothness property of $\nabla \Phi(\cdot,\cdot)$ at $(\bar{\theta},\bar{\xi})$ to get the the last equality. In addition, we could prove that there exists $\hat{\delta}>0$ such that
	\begin{align*}
	\langle \nabla\Phi(\theta^j,\xi^j), (\Delta \theta^j,\Delta \xi^j)\rangle \leq -\hat{\delta} \|(\Delta \theta^j,\Delta \xi^j)\|^2.
	\end{align*}
	Together with \cite[Proposition 7]{li2018efficiently} and \cite[Proposition 8.3.18]{facchinei2007finite}, we can derive that for $\mu\in (0,1/2)$, there exists an integer $j_0$ such that for all $j\geq j_0$,
	\begin{align*}
	\Phi(\theta^j + \Delta \theta^j,\xi^j + \Delta \xi^j) \leq \Phi(\theta^j,\xi^j) + \mu \langle \nabla\Phi(\theta^j,\xi^j), (\Delta \theta^j,\Delta \xi^j)\rangle,
	\end{align*}
	which implies $\theta^{j+1} = \theta^j +\Delta \theta^j$, $\xi^{j+1}=\xi^j+\Delta \xi^j$, for $j\geq j_0$. Combing with \eqref{eq: onetau}, we complete the proof.\qed
\end{proof}

Note that in the above theorem, we have proved the Q-superlinear convergence of the sequence $\{(\theta^j,\xi^j)\}$, which implies the R-superlinear convergence of $\{\|\nabla \Phi(\theta^j,\xi^j)\|\}$ due to the fact that
\begin{align*}
\|\nabla\Phi(\theta^j,\xi^j)\|=\|\nabla\Phi(\theta^j,\xi^j)-\nabla\Phi(\bar{\theta},\bar{\xi})\|=O(\|(\theta^{j},\xi^{j})-(\bar{\theta},\bar{\xi})\|).
\end{align*}
This further implies that  condition (A) or condition (B) in Algorithm \ref{alg:ssn} can be met in a small number of iterations, typically at most dozens of steps. 

\begin{remark}\label{remark:stronglysemismooth}
	As a side note, for each closed convex set $\mathcal{D}$ in Proposition \ref{prop:defD}, we will prove in Proposition \ref{prop: semismooth_p} that the assumption on  $\partial{\rm Prox}_{p} (\cdot)$ in Theorem \ref{thm:convergenceSSN} always holds.
\end{remark}

\section{Numerical implementation of Algorithm {\tt proxALM}}
\label{sec:proximalmapping}
In this section, we discuss some numerical details concerning the efficient implementation of the proposed {\tt proxALM}. For implementing the {\tt proxALM}, we need the proximal mapping ${\rm Prox}_p(\xi)$ for any $\xi\in\mathbb{R}^{dn}$ and its generalized Jacobian. In addition, when evaluating the function value of the problem \eqref{D}, we need the formula for $p^*(\cdot)$.

\subsection{Computation associated with $\mathcal{D}$}
\label{subsec:D}
For any $\xi=(\xi_1;\cdots;\xi_n)\in \mathbb{R}^{dn}$, since $p(\xi)= \sum_{i=1}^n \delta_\mathcal{D}(\xi_i)$, we have that
\begin{equation}\label{eq:pfunction}
\begin{aligned}
&p^*(\xi)=\sum_{i=1}^n \delta_\mathcal{D}^*(\xi_i),\quad
{\rm Prox}_{ p}(\xi)=\left(\begin{array}{c}
\Pi_\mathcal{D}(\xi_1)\\
\vdots\\
\Pi_\mathcal{D}(\xi_n)
\end{array}\right),\\
&\partial {\rm Prox}_{ p}(\xi)=\left(\begin{array}{ccc}
\partial \Pi_\mathcal{D}(\xi_1) & &\\
&\ddots&\\
&&\partial\Pi_\mathcal{D}(\xi_n)
\end{array}\right),
\end{aligned}
\end{equation}
which means that we only need to focus on $\delta_\mathcal{D}^*(\cdot)$, $\Pi_\mathcal{D}(\cdot)$ and $\partial \Pi_\mathcal{D}(\cdot)$ for each of the set $\mathcal{D}$ defined in Proposition \ref{prop:defD}. We summarize the results in Table \ref{table_conjugate} -- Table \ref{table_jacobian}, where the detailed derivation associated with the case when $\mathcal{D}=\{x\in \mathbb{R}^d\mid \|x\|_{1}\leq L\}$ is given in Appendix \ref{appendix: derive_1norm}.

\begin{table}[ht]
	\caption{Conjugate function $\delta_\mathcal{D}^*(\cdot)$}
	\label{table_conjugate}
	\begin{threeparttable}
		\begin{tabular}{cc}
			\hline\noalign{\smallskip}
			$\mathcal{D}$ &  $\delta_\mathcal{D}^*(x)$   \\
			\noalign{\smallskip}\hline\noalign{\smallskip}
			$\mathcal{D}=\{x\in \mathbb{R}^d\mid x_{K_1}\geq 0,x_{K_2}\leq 0\}$ & $\delta_\mathcal{D}^*(x)=\delta_{-}(x_{K_1})+\delta_{+}(x_{K_2})+\delta_{\{0\}}(x_{K_3})$\tnote{*}\\
			$\mathcal{D}=\{x\in \mathbb{R}^d\mid L\leq x\leq U\}$ &  $\delta_\mathcal{D}^*(x)= \langle U,\max\{x,0\}\rangle + \langle L,\min\{x,0\}\rangle$\\
			$\mathcal{D}=\{x\in \mathbb{R}^d\mid \|x\|_{q}\leq L\}$ & $\delta_\mathcal{D}^*(x)=L\|x\|_p,\quad 1/p+1/q=1$\\
			\noalign{\smallskip}\hline
		\end{tabular}
		\begin{tablenotes}\footnotesize
			\item[*] $K_3:=\{1,\cdots,d\} \backslash (K_1 \cup K_2)$
		\end{tablenotes}
	\end{threeparttable}
\end{table}

\begin{table}[ht]
	\caption{Proximal mapping $\Pi_\mathcal{D}(\cdot)$}
	\label{table_proximalmapping}
	\tabcolsep 2.5pt
	\begin{threeparttable}
		\begin{tabular}{cc}
			\hline\noalign{\smallskip}
			$\mathcal{D}$ &  $\Pi_\mathcal{D}(\cdot)$   \\
			\noalign{\smallskip}\hline\noalign{\smallskip}
			$\mathcal{D}=\{x\in \mathbb{R}^d\mid x_{K_1}\geq 0,x_{K_2}\leq 0\}$ & $(\Pi_\mathcal{D}(x))_i=\left\{\begin{aligned}
			&0 && \mbox{if}\ i\in K_1, x_{i}< 0, \mbox{ or }i\in K_2,x_i>0\\
			&x_i && \mbox{otherwise}
			\end{aligned}\right.$ \\
			$\mathcal{D}=\{x\in \mathbb{R}^d\mid L\leq x\leq U\}$ &  $(\Pi_\mathcal{D}(x))_i=\left\{\begin{aligned}
			&x_i && \mbox{if}\ L_i\leq x_i\leq U_i\\
			&0 && \mbox{otherwise}
			\end{aligned}\right.$\\
			$\mathcal{D}=\{x\in \mathbb{R}^d\mid \|x\|_{\infty}\leq L\}$ & $(\Pi_\mathcal{D}(x))_i=\left\{\begin{aligned}
			&x_i && \mbox{if}\ |x_i|\leq L \\
			&{\rm sign}(x_i)L && \mbox{if}\ |x_i|> L
			\end{aligned}\right.$\\
			$\mathcal{D}=\{x\in \mathbb{R}^d\mid \|x\|_{2}\leq L\}$ & $\Pi_\mathcal{D}(x)=\left\{\begin{aligned}
			&x && \mbox{if}\ \|x\|_2\leq L \\
			&L\frac{x}{\|x\|_2} && \mbox{otherwise}
			\end{aligned}\right.$\\
			$\mathcal{D}=\{x\in \mathbb{R}^d\mid \|x\|_{1}\leq L\}$ & $\Pi_\mathcal{D}(x)=\left\{\begin{aligned}
			&x && \mbox{if}\ \|x\|_1\leq L \\
			&L P_x \Pi_{\Delta_d}(P_x x/L)\tnote{*}&& \mbox{otherwise}
			\end{aligned}\right.$\\
			\noalign{\smallskip}\hline
		\end{tabular}
		\begin{tablenotes}\footnotesize
			\item[*] $P_x={\rm Diag}({\rm sign}(x))\in \mathbb{R}^{d\times d}$, $\Pi_{\Delta_d}(\cdot)$ denotes the projection onto the simplex $\Delta_d=\{x\in \mathbb{R}^d\mid e_d^T x=1,x\geq 0\}$, which can be computed in $O(d\log (d))$ operations.
		\end{tablenotes}
	\end{threeparttable}
\end{table}

\begin{table}[ht]
	\caption{Generalized Jacobian of $\Pi_\mathcal{D}(\cdot)$}
	\label{table_jacobian}
	\tabcolsep 2.5pt
	\begin{threeparttable}
		\begin{tabular}{cc}
			\hline\noalign{\smallskip}
			$\mathcal{D}$ &  $\partial\Pi_\mathcal{D}(\cdot)$ \\
			\noalign{\smallskip}\hline\noalign{\smallskip}
			$\mathcal{D}=\left\{ x\in \mathbb{R}^d \left| 
			\begin{aligned}
			& x_{K_1}\geq 0\\
			& x_{K_2}\leq 0 
			\end{aligned}
			\right\}\right.$  & $\partial \Pi_\mathcal{D}(x)= {\rm Diag}(u),\ u_i\in \left\{\begin{aligned}
			&\{0\} && \begin{aligned} &\mbox{if}\ i\in K_1, x_{i}< 0\\&\mbox{ \ or }i\in K_2,x_i>0 \end{aligned}\\
			&[0,1] && \mbox{if}\ i\in K_1\cup K_2,x_i=0\\
			&\{1\} && \mbox{otherwise}
			\end{aligned}\right.$ \\
			$\mathcal{D}=\{x\in \mathbb{R}^d\mid L\leq x\leq U\}$ &  $\partial \Pi_\mathcal{D}(x)= {\rm Diag}(u),\  u_i\in \left\{\begin{aligned}
			&\{1\} && \mbox{if}\ L_i<x_i<U_i \\
			&[0,1] && \mbox{if}\ x_i=L_i \mbox{ or }x_i=U_i \\[2pt]
			&\{0\} && \mbox{otherwise}
			\end{aligned}\right.$\\
			$\mathcal{D}=\{x\in \mathbb{R}^d\mid \|x\|_{\infty}\leq L\}$ & $\partial\Pi_\mathcal{D}(x)={\rm Diag}(u),\ u_i\in\left\{\begin{aligned}
			&\{1\} && \mbox{if}\ |x_i|<L \\
			&[0,1] && \mbox{if}\ |x_i|=L \\
			&\{0\} && \mbox{otherwise}
			\end{aligned}\right.$\\
			$\mathcal{D}=\{x\in \mathbb{R}^d\mid \|x\|_{2}\leq L\}$ & $\partial\Pi_\mathcal{D}(x)=\left\{\begin{aligned}
			&\{I_d\} && \mbox{if}\ \|x\|_2< L \\
			&\Big\{I_d-t\frac{xx^T}{L^2}\mid 0\leq t\leq 1\Big\} && \mbox{if}\ \|x\|_2= L\\
			&\Big\{\frac{L}{\|x\|_2}(I_d-\frac{xx^T}{\|x\|_2^2})\Big\} && \mbox{otherwise}
			\end{aligned}\right.$\\
			$\mathcal{D}=\{x\in \mathbb{R}^d\mid \|x\|_{1}\leq L\}$ & $H\in \partial \Pi_\mathcal{D}(x),\ \mbox{where } H=\left\{\begin{aligned}
			&I_d && \mbox{if}\ \|x\|_1\leq L \\
			&P_x \widetilde{H} P_x\tnote{*} && \mbox{otherwise}
			\end{aligned}\right.$ \\
			\noalign{\smallskip}\hline
		\end{tabular}
		\begin{tablenotes}\footnotesize
			\item[*] $\widetilde{H}={\rm Diag}(r)-\frac{1}{{\rm nnz}(r)}rr^T\in \partial\Pi_{\Delta_d}(x)$, where $r\in \mathbb{R}^d$ is defined as $r_i=1$ if $\big(\Pi_{\Delta_d}(P_x x/L)\big)_i\neq 0 $, and $r_i=0$ otherwise.
		\end{tablenotes}
	\end{threeparttable}
\end{table}

From the formula of $\partial \Pi_{\cal D}(\cdot)$ in Table \ref{table_jacobian}, we could see that $\partial {\rm Prox}_p(\cdot)$ is a nonempty, compact valued and upper-semicontinuous multifunction. We prove the strong semismoothness of ${\rm Prox}_{p} (\cdot)$ with respect to $\partial{\rm Prox}_{p} (\cdot)$ in the following proposition.
\begin{proposition}\label{prop: semismooth_p}
	For the closed convex set $\mathcal{D}$ defined in Proposition \ref{prop:defD}, ${\rm Prox}_{p} (\cdot)$ is strongly semismooth with respect to $\partial{\rm Prox}_{p} (\cdot)$.
\end{proposition}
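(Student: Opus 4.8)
The plan is to exploit the separable, block-diagonal structure recorded in \eqref{eq:pfunction} to reduce the claim to a single block, and then to treat the polyhedral and non-polyhedral instances of $\mathcal{D}$ by different arguments. First I would observe that since ${\rm Prox}_p(\xi)=(\Pi_{\mathcal{D}}(\xi_1);\cdots;\Pi_{\mathcal{D}}(\xi_n))$ is a Cartesian product of copies of $\Pi_{\mathcal{D}}$, and $\partial{\rm Prox}_p(\xi)$ is the corresponding block-diagonal multifunction, strong semismoothness of the product map with respect to the block-diagonal generalized Jacobian follows from (indeed is equivalent to) strong semismoothness of each block. Hence it suffices to show, for each $\mathcal{D}$ in Proposition \ref{prop:defD}, that $\Pi_{\mathcal{D}}(\cdot)$ is strongly semismooth with respect to $\partial\Pi_{\mathcal{D}}(\cdot)$.

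For the polyhedral cases --- namely (S1), (S2), and the subcases $q=1$ and $q=\infty$ of (S3), where $\mathcal{D}$ is respectively an orthant-type set, a box, the cross-polytope $\{\|x\|_1\leq L\}$, and the cube $\{\|x\|_\infty\leq L\}$ --- I would invoke the fact that the metric projection onto a polyhedral convex set is a (globally Lipschitz) piecewise affine function, and that every continuous piecewise affine function is strongly semismooth with respect to its Clarke generalized Jacobian \cite{facchinei2007finite}. Since the formulas in Table \ref{table_jacobian} describe exactly this Clarke Jacobian, the conclusion follows immediately in these four cases, with no error term at all on each polyhedral piece.

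The remaining, and genuinely non-polyhedral, case is the Euclidean ball $\mathcal{D}=\{x\mid\|x\|_2\leq L\}$, which I expect to be the main obstacle. Away from the sphere $\|x\|_2=L$ the map $\Pi_{\mathcal{D}}$ is smooth: it is the identity on the open region $\|x\|_2<L$, and on $\|x\|_2>L$ it equals $Lx/\|x\|_2$, which is real-analytic there because $\|x\|_2>L>0$; in both regions strong semismoothness is automatic. The work is concentrated at a boundary point $x_0$ with $\|x_0\|_2=L$, where one must verify, for every $V\in\partial\Pi_{\mathcal{D}}(x_0+\Delta x)$, the estimate $\Pi_{\mathcal{D}}(x_0+\Delta x)-\Pi_{\mathcal{D}}(x_0)-V\Delta x=O(\|\Delta x\|^2)$ as $\Delta x\to 0$. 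I would split this according to where $x_0+\Delta x$ lands. When $\|x_0+\Delta x\|_2\leq L$ the projection is the identity, and a short computation using the identity $x_0^T\Delta x=-\tfrac12\|\Delta x\|^2$ that holds whenever $\|x_0+\Delta x\|_2=\|x_0\|_2=L$ shows the residual is $O(\|\Delta x\|^2)$ (and is exactly zero when $\|x_0+\Delta x\|_2<L$, since then $V=I_d$). When $\|x_0+\Delta x\|_2>L$, $V$ is the genuine Jacobian of the $C^2$ map $h(x)=Lx/\|x\|_2$ at $x_0+\Delta x$, and the required bound is just the standard second-order backward Taylor estimate $h(y)-h(x)-Dh(y)(y-x)=O(\|y-x\|^2)$ valid for any twice continuously differentiable $h$, applied with $y=x_0+\Delta x$, $x=x_0$, and $h(x_0)=x_0$. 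Collecting the three regimes, with uniformly bounded constants, yields the estimate and hence strong semismoothness of $\Pi_{\mathcal{D}}$ at $x_0$; as an alternative to this hands-on verification one could instead cite the known strong semismoothness of the projector onto the second-order cone \cite{sun2002semismooth}. Combining the polyhedral and $\ell_2$-ball cases with the block reduction completes the proof.
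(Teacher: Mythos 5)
Your proposal follows essentially the same route as the paper: reduce via the block-diagonal structure in \eqref{eq:pfunction} to a single block, dispose of the polyhedral sets by the fact that a Lipschitz continuous piecewise affine map is strongly semismooth with respect to its Clarke generalized Jacobian \cite{facchinei2007finite}, and handle the Euclidean ball via the second-order cone projector (the paper cites \cite[Proposition 4.3]{chen2003complementarity}; your direct three-regime Taylor computation is a correct and self-contained alternative, and the identity $x_0^T\Delta x=-\tfrac12\|\Delta x\|^2$ on the sphere is used correctly). The one point where your argument does not quite match the statement being proved is the case $\mathcal{D}=\{x\mid\|x\|_1\leq L\}$. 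There the multifunction $\partial\Pi_{\mathcal{D}}(\cdot)$ fixed in Table \ref{table_jacobian} is not the Clarke generalized Jacobian but the HS-Jacobian surrogate built from $\Pi_{\Delta_d}$, and elements of the HS-Jacobian need not belong to the Clarke generalized Jacobian of the projector. Consequently the piecewise-affine argument, which yields the $O(\|\Delta x\|^2)$ (in fact eventually exact) estimate only for Clarke-Jacobian elements, does not by itself cover the matrices $H=P_x\widetilde{H}P_x$ actually used; the paper closes exactly this step by invoking the dedicated results of \cite{han1997newton,li2020efficient}, which establish the same estimate for HS-Jacobian elements. Adding that citation, or a short argument that the chosen $H$ provides the required first-order expansion at the points where it is evaluated, would make your proof complete.
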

\begin{proof}
	By the formula of $p(\cdot)$ and the definition of strong semismoothness, it suffices to prove that for each choice of $\mathcal{D}$ defined in Proposition \ref{prop:defD}, $\Pi_{\cal D}(\cdot)$ is strongly semismooth with respect to the corresponding Clarke generalized Jacobian $\partial \Pi_{\cal D}(\cdot)$ defined in Table \ref{table_jacobian}.
	
	For the case when $\mathcal{D}=\{x\in \mathbb{R}^d\mid x_{K_1}\geq 0,x_{K_2}\leq 0\}$, $\mathcal{D}=\{x\in \mathbb{R}^d\mid L\leq x\leq U\}$ or $\mathcal{D}=\{x\in \mathbb{R}^d\mid \|x\|_{\infty}\leq L\}$, we can see that $\Pi_{\cal D}(\cdot)$ is a Lipschitz continuous piecewise affine function, and thus $\Pi_{\cal D}(\cdot)$ is strongly semismooth everywhere with respect to the corresponding Clarke generalized Jacobian $\partial \Pi_{\cal D}(\cdot)$ defined in Table \ref{table_jacobian} due to \cite[Proposition 7.4.7]{facchinei2007finite}. For the case when $\mathcal{D}=\{x\in \mathbb{R}^d\mid \|x\|_{2}\leq L\}$, the strong semismoothness of $\Pi_{\cal D}(\cdot)$ with respect to $\partial \Pi_{\cal D}(\cdot)$ follows from the fact that the projection onto the second order cone is strongly semismooth \cite[Proposition 4.3]{chen2003complementarity}. When $\mathcal{D}=\{x\in \mathbb{R}^d\mid \|x\|_{1}\leq L\}$, $\Pi_{\cal D}(\cdot)$ is strongly semismooth with respect to the corresponding $\partial \Pi_{\cal D}(\cdot)$ in Table \ref{table_jacobian}, which is the so-called HS-Jacobian \cite{han1997newton,li2020efficient}. \qed
\end{proof}

\subsection{Finding a computable element in $\hat{\partial}^2\Phi(\theta,\xi)$}
\label{subsec:hessian}
As already mentioned, the most difficult part of the {\tt proxALM} is in solving the Newton system \eqref{eq: cg-system}. For efficient practical implementation, we need to find an efficiently computable element in $\hat{\partial}^2\Phi(\theta,\xi)$ for any given $(\theta,\xi)\in\mathbb{R}^n\times \mathbb{R}^{dn}$. From the definition of $\hat{\partial}^2\Phi(\theta,\xi)$, we can rewrite it as
\begin{align*}
\hat{\partial}^2\Phi(\theta,\xi) = \mathcal{M}_1(\theta,\xi)+\mathcal{M}_2(\xi),
\end{align*}
where
\begin{align*}
\mathcal{M}_1(\theta,\xi) &= \sigma \left(\begin{aligned}
A^T\\
B^T
\end{aligned}\right)\Big(I_{n^2}-\partial \Pi_{+}(A\theta+B\xi-\frac{\tilde{u}}{\sigma})\Big)\left(\begin{aligned}
&A & B
\end{aligned}\right),\\
\mathcal{M}_2(\xi) &= \left(\begin{aligned}
I_n+&\frac{1}{\sigma}H_1\\
&0
\end{aligned}\right.
\left.\begin{aligned}
&0\\
\sigma \Big[ I_{dn}-\partial{\rm Prox}_p&(\xi-\frac{\tilde{v}}{\sigma})\Big]+\frac{1}{\sigma}H_2
\end{aligned}\right).
\end{align*}

Based on our discussion on $\partial {\rm Prox}_p(\cdot)$ in \eqref{eq:pfunction} and $\partial \Pi_\mathcal{D}(\cdot)$ in Table \ref{table_jacobian}, we can see that the elements in $\partial {\rm Prox}_p(\cdot)$ are block diagonal matrices. In order to maintain the block diagonal structure, we choose $H_1$ and $H_2$ to be diagonal matrices, and hence the elements in $\mathcal{M}_2(\xi)$ for any $\xi\in \mathbb{R}^{dn}$ will also be block diagonal matrices. One can easily pick an element in $\mathcal{M}_2(\xi)$ by choosing an element in $\partial{\rm Prox}_p(\xi-\tilde{v}/\sigma)$.

For $\mathcal{M}_1(\theta,\xi)$, we choose an element ${\rm Diag}(w)$ in $\partial \Pi_{+}(A\theta+B\xi-\tilde{u}/\sigma)$, where
\begin{align*}
w_i = \left\{\begin{aligned}
&1 && \mbox{if}\ (A\theta+B\xi-\frac{\tilde{u}}{\sigma})_{i}\geq 0 \\
&0 && \mbox{otherwise}
\end{aligned}\right.,\quad i = 1,\cdots,n^2.
\end{align*}
By denoting $\bar{w}\in \mathbb{R}^{n^2}$ as $\bar{w}_i = 1-w_i$ for $i=1,\cdots,n^2$, we have
\begin{align*}
M=\sigma \left(\begin{aligned}
A^T\\
B^T
\end{aligned}\right){\rm Diag}(\bar{w})\left(\begin{aligned}
&A & B
\end{aligned}\right)=\sigma\left(\begin{aligned}
&A^T {\rm Diag}(\bar{w})A && A^T {\rm Diag}(\bar{w})B\\
&B^T {\rm Diag}(\bar{w})A && B^T {\rm Diag}(\bar{w})B
\end{aligned}\right)
\end{align*}
is an element in $\mathcal{M}_1(\theta,\xi)$. After some algebraic manipulations by making use of the structure of $A$ and $B$, we can get the following results:
\begin{align*}
A^T {\rm Diag}(\bar{w})A &= {\rm Diag}\left(\sum_{i=1}^n \bar{w}_{(i)}\right)+{\rm Diag}\left(\bar{w}_{(1)}^T\bar{w}_{(1)},\cdots,\bar{w}_{(n)}^T\bar{w}_{(n)}\right)\\
&\quad -(\bar{w}_{(1)},\cdots,\bar{w}_{(n)})-(\bar{w}_{(1)}^T;\cdots;\bar{w}_{(n)}^T)\in \mathbb{R}^{n\times n},\\
A^T {\rm Diag}(\bar{w})B&=\left({\rm Diag}(\bar{w}_{(1)})B_1,\cdots,{\rm Diag}(\bar{w}_{(n)})B_n \right)\\
&\quad -{\rm Diag}\left(\bar{w}_{(1)}^T B_1,\cdots,\bar{w}_{(n)}^T B_n\right)\in \mathbb{R}^{n\times dn},\\
B^T {\rm Diag}(\bar{w})B &={\rm Diag}\left(B_1^T {\rm Diag}(\bar{w}_{(1)})B_1,\cdots,B_n^T {\rm Diag}(\bar{w}_{(n)})B_n\right)\in\mathbb{R}^{dn\times dn},
\end{align*}
where $\bar{w}_{(i)}:=\bar{w}_{(i-1)n+1:in}\in \mathbb{R}^n$. It can be seen that the $0$-$1$ structure of $\bar{w}$ will reduce many operations in matrix-matrix multiplications, and hence highly reduce the computational cost for computing $M$ or matrix-vector products with $M$. Note that for all $i$, ${\rm Diag}(\bar{w}_{(i)})B_i$ is a matrix in $\mathbb{R}^{n\times d}$, with its $j$-th row being the $j$-th row of $B_i$ if $(\bar{w}_{(i)})_j=1$, or the zero vector if $(\bar{w}_{(i)})_j=0$. Then the computation of $\bar{w}_{(i)}^T B_i$ can be obtained by summing the non-zero rows of ${\rm Diag}(\bar{w}_{(i)})B_i$, and the computation of $B_i^T {\rm Diag}(\bar{w}_{(i)})B_i=({\rm Diag}(\bar{w}_{(i)})B_i)^T({\rm Diag}(\bar{w}_{(i)})B_i)$ can be highly reduced in the same way.

The special structure of the elements in $\hat{\partial}^2\Phi(\theta,\xi)$, which we call as the second-order sparsity, makes it possible for us to apply the {\tt SSN} based {\tt proxALM} algorithm to solve the huge QP problem \eqref{reformulated_problem} that contains $(d+1)n$ variables, $n(n-1)$ linear inequality constraints and $n$ possibly non-polyhedral constraints.

\section{A constraint generation method to accelerate the computation}
\label{sec:adaptive sieving}

Due to the existence of $n(n-1)$ linear inequality constraints, the problem \eqref{reformulated_problem} is quite difficult to solve for the case when the number of observations $n$ is huge. This naturally motivated us to consider a constraint generation method to avoid handling the full set of constraints when solving the problem. In this section, we design a practical implementation of the constraint generation method for solving the problem \eqref{reformulated_problem} with large $n$, where each reduced problem is solved by the proposed {\tt proxALM}.

The basic idea of the constraint generation method is to start solving the constrained QP with a subset of constraints, then add the most violated constraints (or part of violated constraints) to form a new reduced problem until the optimality conditions are satisfied. In our implementation, there are three points that we should emphasize. First, we add a relatively large number of most violated constraints in each round to highly reduce the number of rounds needed for the constraint generation method to terminate. Second, we apply our proposed {\tt proxALM} to solve each reduced problem to high accuracy, which is demonstrated to be quite efficient, especially for large-scale problems. Third, we divide the $O(n^2)$ constraints into blocks and check the optimality conditions block-wise to cope with the memory demand.	

Suppose that $(\theta^*,\xi^*,u^*,v^*)\in \mathbb{R}^n\times \mathbb{R}^{dn}\times \mathbb{R}^{n^2}\times \mathbb{R}^{dn}$ is a KKT solution of the problems \eqref{reformulated_problem} and \eqref{D}. Note that in the problem \eqref{reformulated_problem}, the condition
\begin{align*}
A\theta+B\xi\geq 0
\end{align*}
imposes $n^2$ linear inequality constraints on $(d+1)n$ variables. For the case when $n\gg d$, no more than $n(d+1)$ independent constraints would be active at $(\theta^*,\xi^*)$. That is to say, there exists an index set $I^*\subset \{1,2,\cdots,n^2\}$ with $|I^*|\leq n(d+1)$ such that
\begin{align*}
(A\theta^*+B\xi^*)_{I^*}= 0,\quad (A\theta^*+B\xi^*)_{\bar{I}^*}\geq 0,
\end{align*}
where $\bar{I}^*$ denotes the complement of $I^*$ in $\{1,2,\cdots,n^2\}$. The small proportion of active constraints inspires us to apply the idea of the constraint generation as an acceleration technique to solve the problems with large $n$.

Given an index set $I\subset \{1,2,\cdots,n^2\}$, we consider a variant of the problem \eqref{reformulated_problem} as
\begin{align}
\min_{\theta\in \mathbb{R}^n,\xi\in \mathbb{R}^{dn}} \ \Big\{\frac{1}{2}\|\theta-Y\|^2+p(\xi)+\delta_{+}(A_{I}\theta+B_{I}\xi)\Big\},\label{eq: newP}
\end{align}
where $A_{I}$ denotes the matrix consisting of the rows of $A$ indexed by $I$. The corresponding dual problem is
\begin{equation}\label{eq: newD}
\begin{aligned}
&\max_{u\in\mathbb{R}^{n^2},v\in \mathbb{R}^{dn}}\ \Big\{-\frac{1}{2}\|A^Tu\|^2-\langle Y,A^Tu\rangle-p^*(-v)-\delta_{+}(u)\Big\}\\
& \quad {\rm s.t.} \quad B^Tu+v=0,\quad u_{\bar{I}} =0.
\end{aligned}
\end{equation}
The KKT system associated with the problems \eqref{eq: newP} and \eqref{eq: newD} is
\begin{equation}\label{eq: reduced_KKT}
\begin{aligned}
&\theta-Y-A^T u=0,\quad B^T u+v=0,\quad u_{\bar{I}} = 0,\\
&-v\in\partial p(\xi),\quad -u_{I}\in \partial \delta_{+}(A_{I}\theta+B_{I}\xi).
\end{aligned}
\end{equation}
Suppose that $(\bar{\theta},\bar{\xi},\bar{u},\bar{v})\in \mathbb{R}^n\times \mathbb{R}^{dn}\times \mathbb{R}^{n^2}\times \mathbb{R}^{dn}$ satisfies the KKT system \eqref{eq: reduced_KKT}. We could see that $(\bar{\theta},\bar{\xi},\bar{u},\bar{v})$ naturally satisfies the KKT system \eqref{KKT_system} associated with the problems \eqref{reformulated_problem} and \eqref{D}, except for the following inequality
\begin{align*}
A_{\bar{I}}\theta+B_{\bar{I}}\xi \geq 0.
\end{align*}
Therefore, we add the indices in the index set $I':=\{i\in \bar{I}\mid A_{\{i\}}\bar{\theta}+B_{\{i\}}\bar{\xi}<0\}$ into $I$ to get a new variant of the problem \eqref{reformulated_problem} as stated in \eqref{eq: newP}, then repeat the procedure until the stopping criteria of the problems \eqref{reformulated_problem} and \eqref{D} are satisfied.

Note that in this paper, we use the relative KKT residual
\begin{align}
R_{\rm KKT}:=\max\Big\{&\frac{\|\theta-Y-A^Tu\|}{1+\|Y\|+\|\theta\|+\|u\|}, \frac{\|B^Tu+v\|}{1+\|u\|+\|v\|},\frac{\|\xi-{\rm Prox}_p(\xi-v)\|}{1+\|\xi\|+\|v\|},\notag \\
&\ \frac{\|A\theta+B\xi-\Pi_{+}(A\theta+B\xi-u)\|}{1+\|A\theta\|+\|B\xi\|+\|u\|}\Big\},\label{eq: Rkkt}
\end{align}
to measure the accuracy of an approximate optimal solution $(\theta,\xi,u,v)$ to the KKT system \eqref{KKT_system}. In addition, given an index set $I\subset \{1,2,\cdots,n^2\}$, we define
\begin{align*}
R_{\rm KKT}^I = \max\Big\{&\frac{\|\theta-Y-A_{I}^Tu_I\|}{1+\|Y\|+\|\theta\|+\|u_I\|}, \frac{\|B_I^Tu_I+v\|}{1+\|u_I\|+\|v\|},\frac{\|\xi-{\rm Prox}_p(\xi-v)\|}{1+\|\xi\|+\|v\|},\\
&\ \frac{\|A_I\theta+B_I\xi-\Pi_{+}(A_I\theta+B_I\xi-u_I)\|}{1+\|A_I\theta\|+\|B_I\xi\|+\|u_I\|}\Big\}.
\end{align*}

Next we present our practical implementation of the constraint generation method for solving the problem \eqref{reformulated_problem} in Algorithm \ref{alg:as}, where we apply our proposed {\tt proxALM} to solve each of the reduced problems.

\begin{algorithm}[h]
	\caption{: Constraint generation method for \eqref{reformulated_problem}}
	\label{alg:as}
	\begin{algorithmic}[1]
		\STATE {\bfseries Initialization:} Given a tolerance $\epsilon>0$ and an initial index set $I^0\subset \{1,2,\cdots,n^2\}$, solve the problem
		\begin{align}
		\min_{\theta\in \mathbb{R}^n,\xi\in \mathbb{R}^{dn}} \ \Big\{\frac{1}{2}\|\theta-Y\|^2+p(\xi)+\delta_{+}(A_{I_0}\theta+B_{I_0}\xi)\Big\}\tag{$P_{I^0}$}\label{eq: newP0}
		\end{align}
		to get an approximate KKT solution $(\theta^0,\xi^0,u^0,v^0)\in \mathbb{R}^n\times \mathbb{R}^{dn}\times \mathbb{R}^{n^2}\times \mathbb{R}^{dn}$ such that $u^0_{\bar{I}^0}=0$ and
		$R_{\rm KKT}^{I_0}\leq \epsilon$. Compute $R_{\rm KKT}$ and set $k=1$.
		\REPEAT
		\STATE {\bfseries Step 1}. Let
		\begin{align*}
		S^{k}:=\{j\in \bar{I}^{k-1}\mid A_{\{j\}}\theta^{k-1}+B_{\{j\}}\xi^{k-1} <0\}.
		\end{align*}
		If $|S^{k}|>|I^{k-1}|$, set
		\begin{equation*}
		I^{k} = I^{k-1}\cup \left\{j\in \bar{I}^{k-1}\left|
		\begin{aligned}
		&A_{\{j\}}\theta^{k-1}+B_{\{j\}}\xi^{k-1} \mbox{ is among the first $|I^{k-1}|$}\\
		&\mbox{ smallest values in }A_{S^{k}}\theta^{k-1}+B_{S^{k}}\xi^{k-1}
		\end{aligned}
		\right\}\right.;
		\end{equation*}
		and otherwise, set $I^{k}=I^{k-1}\cup S^{k}$.
		\\[3pt]
		\STATE {\bfseries Step 2}. Solve the problem
		\begin{align}
		\min_{\theta\in \mathbb{R}^n,\xi\in \mathbb{R}^{dn}} \ \Big\{\frac{1}{2}\|\theta-Y\|^2+p(\xi)+\delta_{+}(A_{I_k}\theta+B_{I_k}\xi)\Big\}\tag{$P_{I^k}$}\label{eq:newPk}
		\end{align}
		to get an approximate KKT solution $(\theta^{k},\xi^{k},u^{k},v^{k})\in \mathbb{R}^n\times \mathbb{R}^{dn}\times \mathbb{R}^{n^2}\times \mathbb{R}^{dn}$ such that
		$u^k_{\bar{I}^k}=0$ and
		$R_{\rm KKT}^{I_k}\leq \epsilon$.
		\\[3pt]
		\STATE {\bfseries Step 3}. Compute $R_{\rm KKT}$ and
		set $k\leftarrow k+1$.
		\UNTIL{Stopping criteria $R_{\rm KKT}\leq \epsilon$ is satisfied.}
	\end{algorithmic}
\end{algorithm}

\begin{remark}\label{remark: warmstart}
	As a side note, in the $k$th iteration of Algorithm \ref{alg:as}, we apply a warm start technique by setting the initialization as the solution  obtained in the $(k-1)$th iteration.
\end{remark}

The convergence property of Algorithm \ref{alg:as} is presented in the following theorem.
\begin{theorem}\label{thm:as}
	For any given tolerance $\epsilon$ and initial index set $I^0\subset \{1,\cdots,n^2\}$, Algorithm \ref{alg:as} will terminate after a finite number of rounds.
\end{theorem}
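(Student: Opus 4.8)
The plan is to combine two facts: the generated index sets $I^0\subset I^1\subset\cdots$ are nested inside the \emph{finite} set $\{1,\cdots,n^2\}$, and at every round that fails to terminate the current index set must strictly enlarge. The heart of the argument is to show that a nonempty violation set is produced precisely when the stopping criterion $R_{\rm KKT}\le\epsilon$ is not yet met.

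First I would prove the following claim: if, for the current index set $I$ with iterate $(\theta,\xi,u,v)$ satisfying $u_{\bar{I}}=0$ and $R_{\rm KKT}^{I}\le\epsilon$, the violation set $S:=\{j\in\bar{I}\mid A_{\{j\}}\theta+B_{\{j\}}\xi<0\}$ is empty, then $R_{\rm KKT}\le\epsilon$. To see this, note that $u_{\bar{I}}=0$ yields $A^Tu=A_I^Tu_I$, $B^Tu=B_I^Tu_I$ and $\|u\|=\|u_I\|$, so the first two terms of $R_{\rm KKT}$ coincide exactly with those of $R_{\rm KKT}^{I}$, while the third term is identical by definition. For the fourth term, the condition $u_{\bar{I}}=0$ makes the $\bar{I}$-block entries of $A\theta+B\xi-\Pi_{+}(A\theta+B\xi-u)$ equal to $(A\theta+B\xi)_j-\Pi_{+}((A\theta+B\xi)_j)=\min\{(A\theta+B\xi)_j,0\}$; when $S=\emptyset$ all these entries are $\ge 0$ and hence vanish, so the numerator of the fourth term of $R_{\rm KKT}$ equals that of $R_{\rm KKT}^{I}$. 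Since $\|A\theta\|\ge\|A_I\theta\|$ and $\|B\xi\|\ge\|B_I\xi\|$, the denominator of $R_{\rm KKT}$ is no smaller than that of $R_{\rm KKT}^{I}$, whence $R_{\rm KKT}\le R_{\rm KKT}^{I}\le\epsilon$, proving the claim.

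Taking the contrapositive, whenever the stopping test $R_{\rm KKT}\le\epsilon$ fails at the end of a round, the next violation set $S$ is nonempty. Examining Step 1, in either branch (whether or not $|S|>|I|$) at least one index of the nonempty set $S\subset\bar{I}$ is transferred into the new index set, so the new index set strictly contains the old one and its cardinality increases by at least one.

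Finally, since every generated index set is contained in $\{1,\cdots,n^2\}$ and its cardinality strictly increases at each non-terminating round, at most $n^2$ non-terminating rounds can occur. In the extreme case the index set reaches the full set $\{1,\cdots,n^2\}$, whereupon $\bar{I}=\emptyset$ forces $S=\emptyset$ and hence, by the claim, $R_{\rm KKT}\le\epsilon$; so termination is guaranteed. This shows Algorithm \ref{alg:as} stops after finitely many rounds. I expect the only delicate step to be the comparison of the fourth residual terms in the claim, where one must track how $\Pi_{+}$ acts on the $\bar{I}$-block under $u_{\bar{I}}=0$ and verify the denominator inequality; the remainder is bookkeeping on strictly increasing finite index sets.
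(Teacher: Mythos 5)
Your proposal is correct and follows essentially the same route as the paper: both prove the key claim that an empty violation set together with $u_{\bar I}=0$ and $R_{\rm KKT}^{I}\le\epsilon$ forces $R_{\rm KKT}\le\epsilon$, then conclude finiteness from the strict growth of the nested index sets inside $\{1,\dots,n^2\}$. Your treatment of the fourth residual term (showing the $\bar I$-block of the numerator vanishes exactly, rather than merely bounding it) is slightly more explicit than the paper's, but the argument is the same.
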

\begin{proof}
	We first prove that if $S^{k+1}=\emptyset$, the corresponding $(\theta^{k},\xi^{k},u^{k},v^{k})\in \mathbb{R}^n\times \mathbb{R}^{dn}\times \mathbb{R}^{n^2}\times \mathbb{R}^{dn}$ satisfies $R_{\rm KKT}\leq \epsilon$. Suppose $S^{k+1}=\emptyset$, then we have
	\begin{align*}
	A_{\bar{I}^{k}}\theta^{k}+B_{\bar{I}^{k}}\xi^{k}\geq 0.
	\end{align*}
	Together with $u_{\bar{I}^{k}}^{k}=0$, we know that
	\begin{align*}
	\frac{\|\theta^{k}-Y-A^Tu^{k}\|}{1+\|Y\|+\|\theta^{k}\|+\|u^{k}\|}&=\frac{\|\theta^{k}-Y-A_{I^{k}}^Tu_{I^{k}}^{k}\|}{1+\|Y\|+\|\theta^{k}\|+\|u^{k}\|},\\ \frac{\|B^Tu^{k}+v^{k}\|}{1+\|u^{k}\|+\|v^{k}\|}&=\frac{\|B_{I^{k}}^Tu_{I^{k}}^{k}+v^{k}\|}{1+\|u_{I^{k}}^{k}\|+\|v^{k}\|},
	\end{align*}
	and
	\begin{align*}
	&\frac{\|A\theta^{k}+B\xi^{k}-\Pi_{+}(A\theta^{k}+B\xi^{k}-u^{k})\|}{1+\|A\theta^{k}\|+\|B\xi^{k}\|+\|u^{k}\|}\\
	&\qquad \leq \frac{\|A_{I^{k}}\theta^{k}+B_{I^{k}}\xi^{k}-\Pi_{+}(A_{I^{k}}\theta^{k}+B_{I^{k}}\xi^{k}-u_{I^{k}}^{k})\|}{1+\|A_{I^{k}}\theta^{k}\|+\|B_{I^{k}}\xi^{k}\|+\|u_{I^{k}}^{k}\|}.
	\end{align*}
	Combining with the fact that $(\theta^{k},\xi^{k},u^{k},v^{k})$ satisfies $R_{\rm KKT}^{I_k}\leq \epsilon$, we have the corresponding relative KKT residual $R_{\rm KKT}\leq R_{\rm KKT}^{I_k}\leq \epsilon$. As a result, if $R_{\rm KKT}>\epsilon$, we have $S^{k+1}\neq \emptyset$, which means that new constraints will be added to construct a new reduced primal problem. Since the total number of the constraints in the primal problem \eqref{reformulated_problem} is finite, our algorithm will terminate after a finite number of rounds. \qed
\end{proof}

Note that in the algorithm, we add a relatively large number of violated constraints instead of adding $n$ violated constraints in each round as done in \cite{chen2020multivariate,bertsimas2021sparse}. The reason is that we have a highly efficient {\tt proxALM} algorithm which can solve each reduced problem \eqref{eq:newPk} with a relatively large number of constraints. The superior performance of this acceleration technique will be demonstrated in the numerical experiments.

\section{Numerical experiments}
\label{sec:numerical}
In this section, we conduct some numerical experiments\footnote{The code is available at {\tt https://doi.org/10.5281/zenodo.5543733}.}  to demonstrate the performance of the {\tt proxALM} for solving \eqref{reformulated_problem}, under each case of $\mathcal{D}$ mentioned in Proposition \ref{prop:defD}, as well as the performance of the constraint generation method for the acceleration. In addition, we design a data-driven Lipschitz estimation method to deal with the boundary effect of the convex regression problem. All our computational results are obtained by running MATLAB R2018b on a windows workstation (12-core, Intel Xeon E5-2680 @ 2.50GHz, 128G RAM).

\subsection{Computational performance of the {\tt proxALM} for solving \eqref{reformulated_problem}}
\label{sec:numericalcomparison}

In this subsection, we compare the performance of the {\tt proxALM}, the {\tt sGS-ADMM}, and {\tt MOSEK} for different choices of $d$ and $n$. In the experiments, we stop the algorithm when $R_{\rm KKT}\leq 10^{-4}$, where $R_{\rm KKT}$ is defined in \eqref{eq: Rkkt}. In Algorithm {\tt proxALM}, we choose $H_1=10^{-3}I_{n}$, $H_2=10^{-3}I_{dn}$, and use the stopping criteria \eqref{B} in \textbf{Step 1} with $\delta_k=\max\{0.1,10^{-6}/\|(\theta^{k+1},\xi^{k+1},u^{k+1},v^{k+1})-(\theta^{k},\xi^{k},u^{k},v^{k}) \|_{\Sigma}\}/(\left \lceil{k/20}\right \rceil^2) $. Here, the {\tt sGS-ADMM} is a symmetric Gauss-Seidel based multi-block {\tt ADMM}, which is proved to be convergent and has been demonstrated to perform better than the possibly nonconvergent directly extended multi-block {\tt ADMM} \cite{chen2017efficient}. The detailed description of the {\tt sGS-ADMM} could be found in Appendix \ref{appendix:sGSADMM}. As we can see in \cite{aybat2014parallel}, as long as there is enough memory, {\tt MOSEK} can perform quite a lot better than the parallel {\tt APG} method. Since there is enough memory on our workstation, we just compare our proposed {\tt proxALM} with the state-of-the-art algorithms {\tt MOSEK} and {\tt sGS-ADMM}.

For a given convex function $\psi:\mathbb{R}^d\rightarrow \mathbb{R}$, the synthetic dataset is generated via the procedure in \cite{mazumder2019computational}. We first generate $n$ samples $X_i\in \mathbb{R}^d$, $i=1,\cdots,n$ uniformly from $[-1,1]^d$, then the corresponding responses are given as $Y_i=\psi(X_i)+\epsilon_i$. The error vector $\epsilon$ follows the normal distribution $\mathcal{N}(0,\sigma^2 I_n)$, where $\sigma^2 = {\rm Var}(\{\psi(X_i)\}_{i=1}^n)/{\rm SNR}$. In the experiments, we take ${\rm SNR}=3$. Before we run the algorithms for the data $X=(X_1,\cdots,X_n)\in \mathbb{R}^{d\times n}$ and $Y\in\mathbb{R}^n$, we process the data so as to build a more predictive model. For the response $Y$ and each row of the predictor $X$, we mean-center the vector and then standardize it to have unit $\ell_2$-norm.

\begin{figure}[H]
	\centering
	\includegraphics[width=1\linewidth]{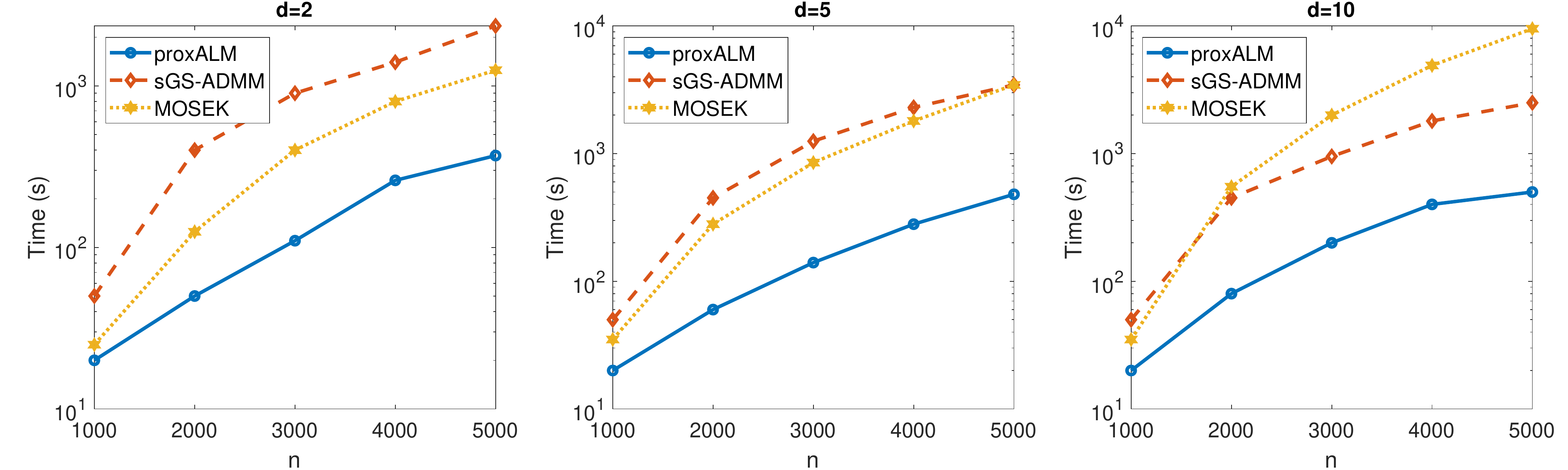}
	\caption{Convex regression for test function $\psi(x)=\exp(p^T x)$, where $p$ is a given random vector with each coordinate drawn from the standard normal distribution}\label{fig: org}
\end{figure}

\begin{figure}[H]
	\centering
	\includegraphics[width=1\linewidth]{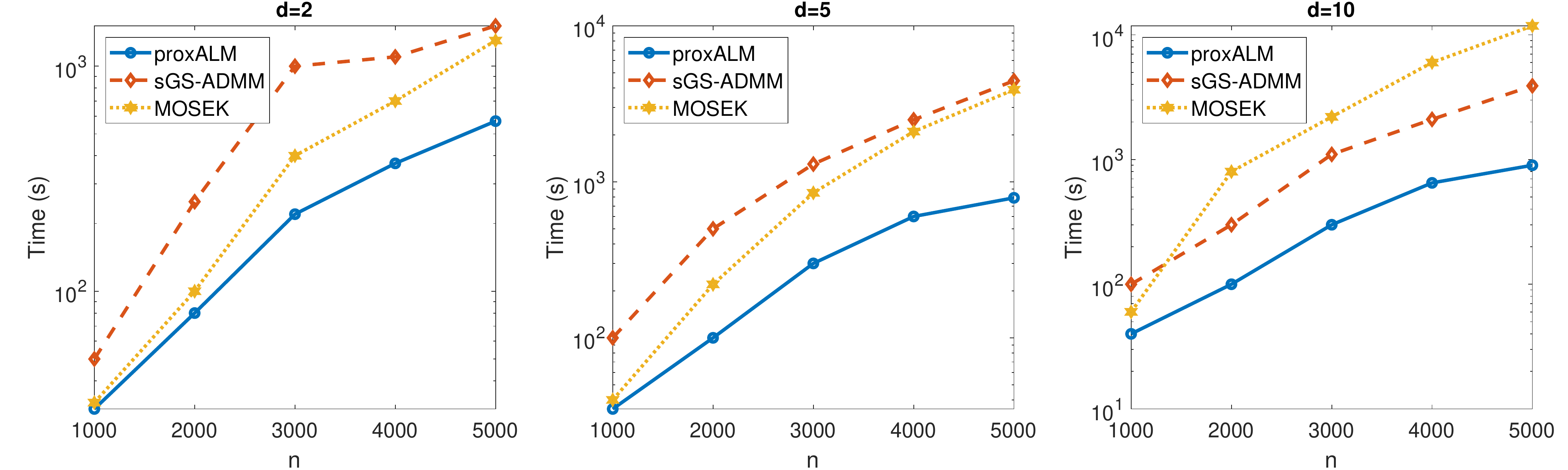}
	\caption{Convex regression with monotone constraint (non-decreasing) for the test function $\psi(x)=(e_d^T x)_{+}$}\label{fig: monotone}
\end{figure}

\begin{figure}[H]
	\centering
	\includegraphics[width=1\linewidth]{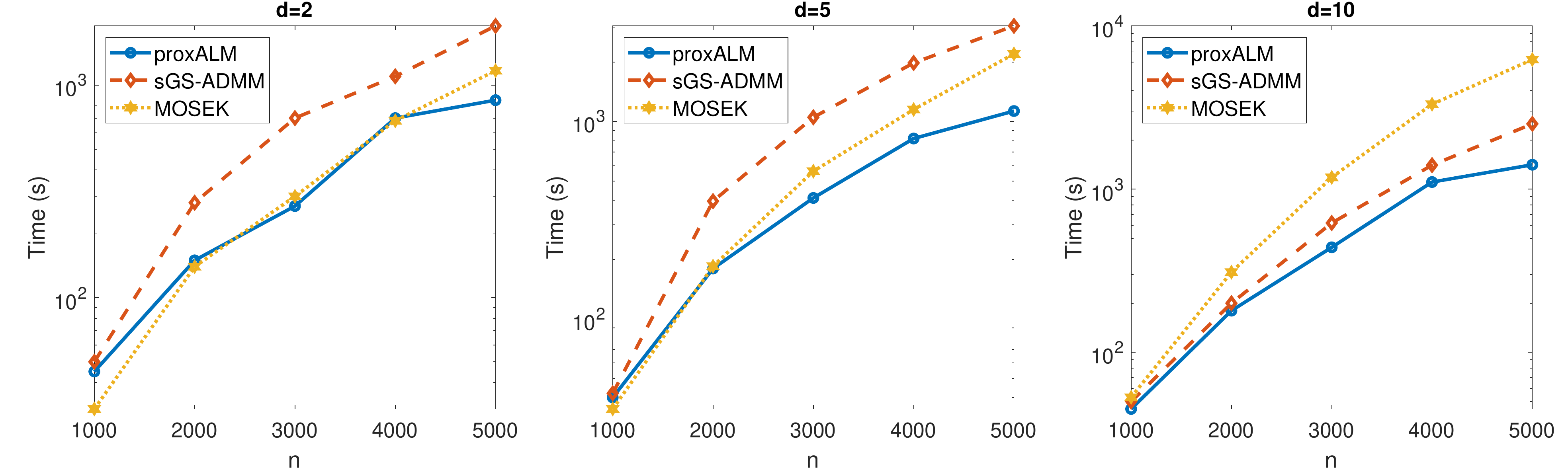}
	\caption{Convex regression with box constraint ($L=0_d$, $U=e_d$) for the test function $\psi(x)=\ln(1+\exp(e_d^Tx))$}\label{fig: twobound}
\end{figure}

\begin{figure}[H]
	\centering
	\includegraphics[width=1\linewidth]{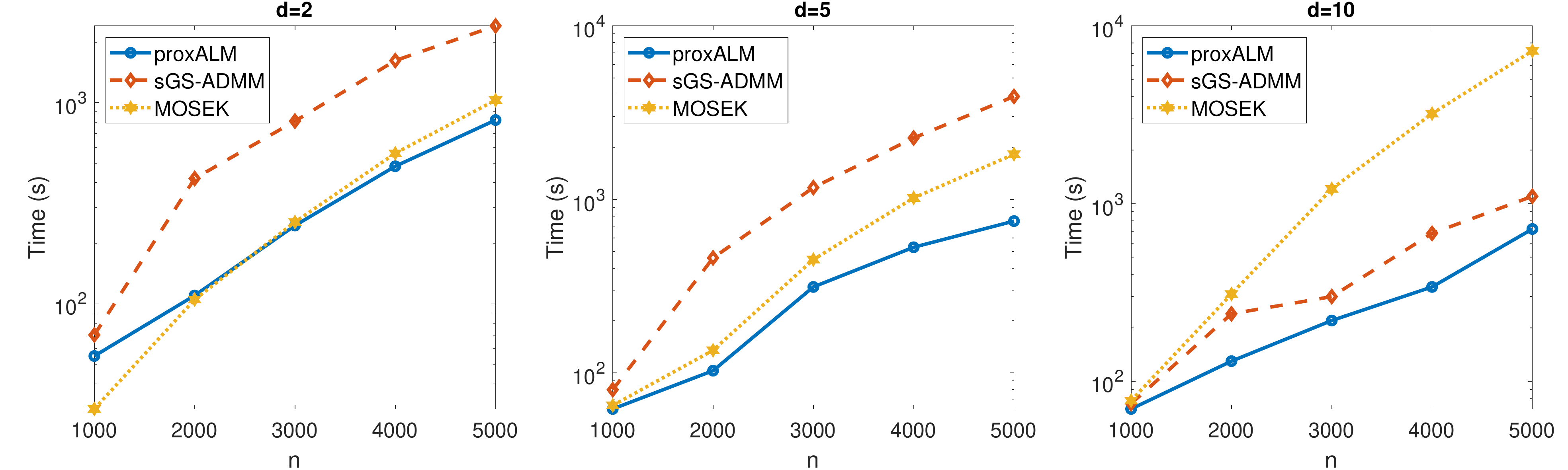}
	\caption{Convex regression with Lipschitz constraint ($p=1$, $q=\infty$, $L=1$) for the test function $\psi(x)=\sqrt{1+x^Tx}$}\label{fig: infnorm}
\end{figure}

\begin{figure}[H]
	\centering
	\includegraphics[width=1\linewidth]{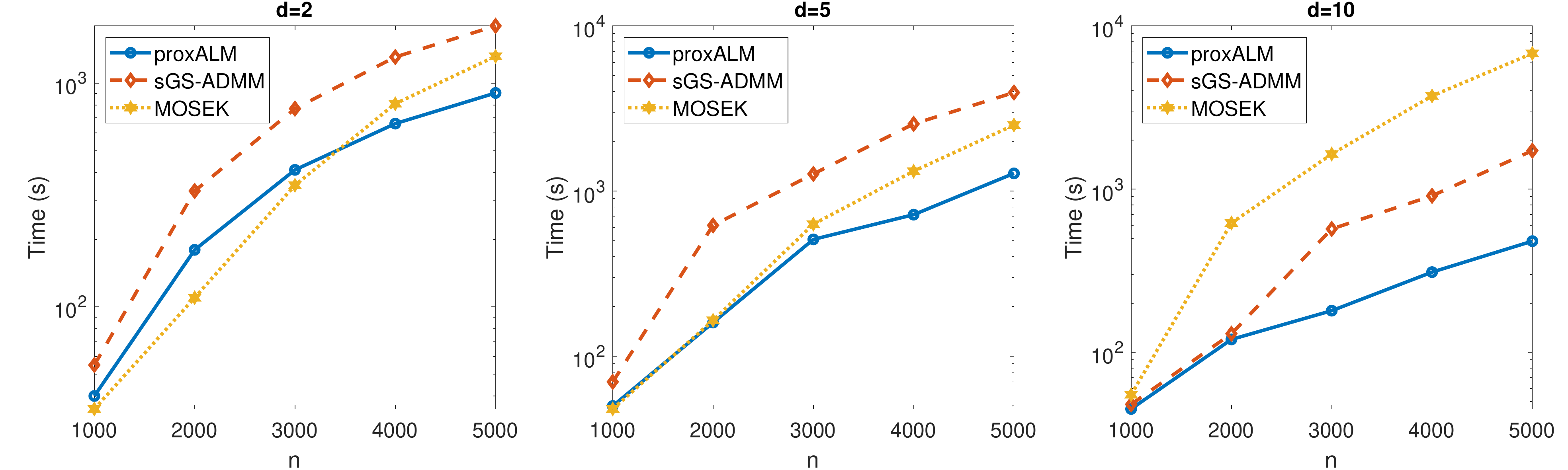}
	\caption{Convex regression with Lipschitz constraint ($p=2$, $q=2$, $L=\lambda_{\rm max}(Q)$) for the test function $\psi(x)=\sqrt{x^T Qx}$, where $Q\in \mathbb{R}^{d\times d}$ is a randomly generated symmetric and positive definite matrix with known largest eigenvalue}
	\label{fig: 2norm}
\end{figure}

\begin{figure}[H]
	\centering
	\includegraphics[width=1\linewidth]{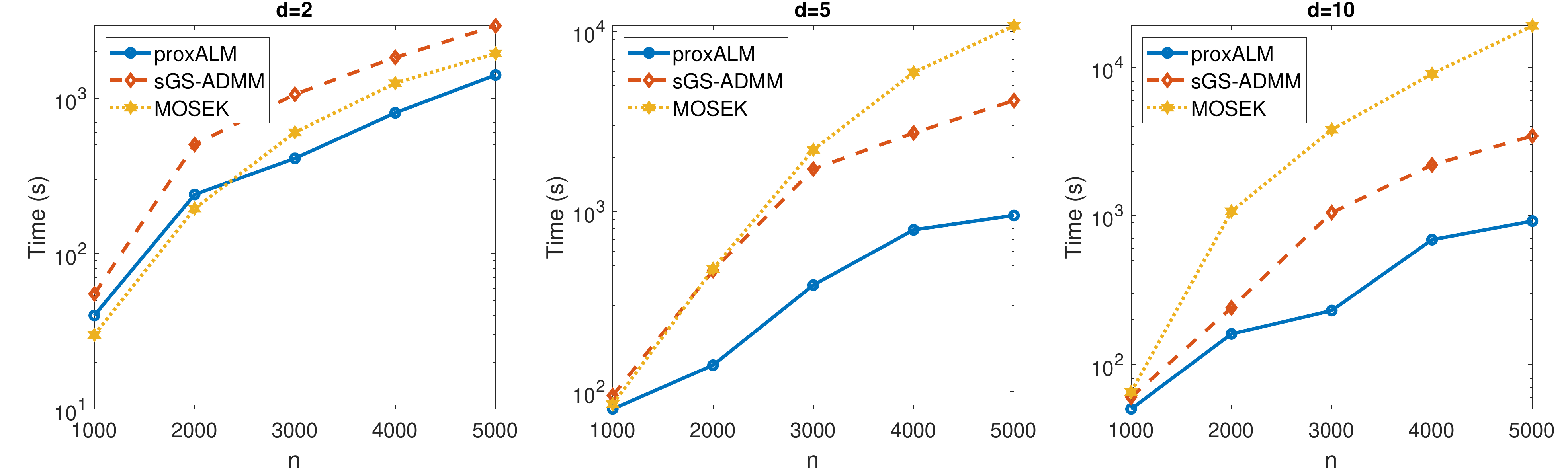}
	\caption{Convex regression with Lipschitz constraint ($p=\infty$, $q=1$, $L=1$) for the test function $\psi(x)=\ln(1+e^{x_1}+\cdots+e^{x_d})$}\label{fig: 1norm}
\end{figure}

The numerical results on the comparison among {\tt proxALM}, {\tt sGS-ADMM} and {\tt MOSEK} can be found in Figure \ref{fig: org} -- Figure \ref{fig: 1norm}. Note that we set the y-axes of all figures in log-scale to better show the functional dependence on $n$. We conduct experiments on the unconstrained convex regression problem and each case of shape-constrained convex regression we mentioned before, under different choices of $(d,n)$. All the test functions are convex on $\mathbb{R}^d$ and satisfy some specified shape constraints. As one can see from the figures, {\tt proxALM} outperforms the state-of-the-art solvers {\tt MOSEK} and {\tt sGS-ADMM} by a large margin, especially for large-scale cases. For example, for the convex regression with monotone constraint when $(d,n)=(5,5000)$, the {\tt proxALM} takes about $800$ seconds, while {\tt sGS-ADMM} and {\tt MOSEK} take around $4000$ seconds.

More numerical results of the comparison on instances with larger $d$ could be found in Appendix \ref{appendix: numerical}.

\subsection{Computational performance of the acceleration with the constraint generation method}
In this subsection, we mainly focus on the case when $n\gg d$. Consider the convex function $\psi(x)=5\|x\|_{\infty}+\|x\|^2$, we sample $n$ data points uniformly from $[-1,1]^d$ and add the Gaussian noise as stated in Section \ref{sec:numericalcomparison} with SNR$=10$. Figure \ref{time2} shows the time comparison among {\tt CGM+proxALM}, {\tt CGM+sGS-ADMM}, {\tt CGM+MOSEK}, {\tt proxALM}, {\tt sGS-ADMM}, {\tt MOSEK}, where {\tt CGM+} means the constraint generation method is used for the acceleration, to solve the convex regression problems with $d=2$. Note that in the {\tt CGM}, we take $|I^0|=10n$ and select the initial indices uniformly at random from the set $\{1,2,\cdots,n^2\}$. We stop each algorithm when $R_{\rm KKT}\leq 10^{-4}$.

\begin{figure}
	\centering
	\includegraphics[width=0.5\linewidth]{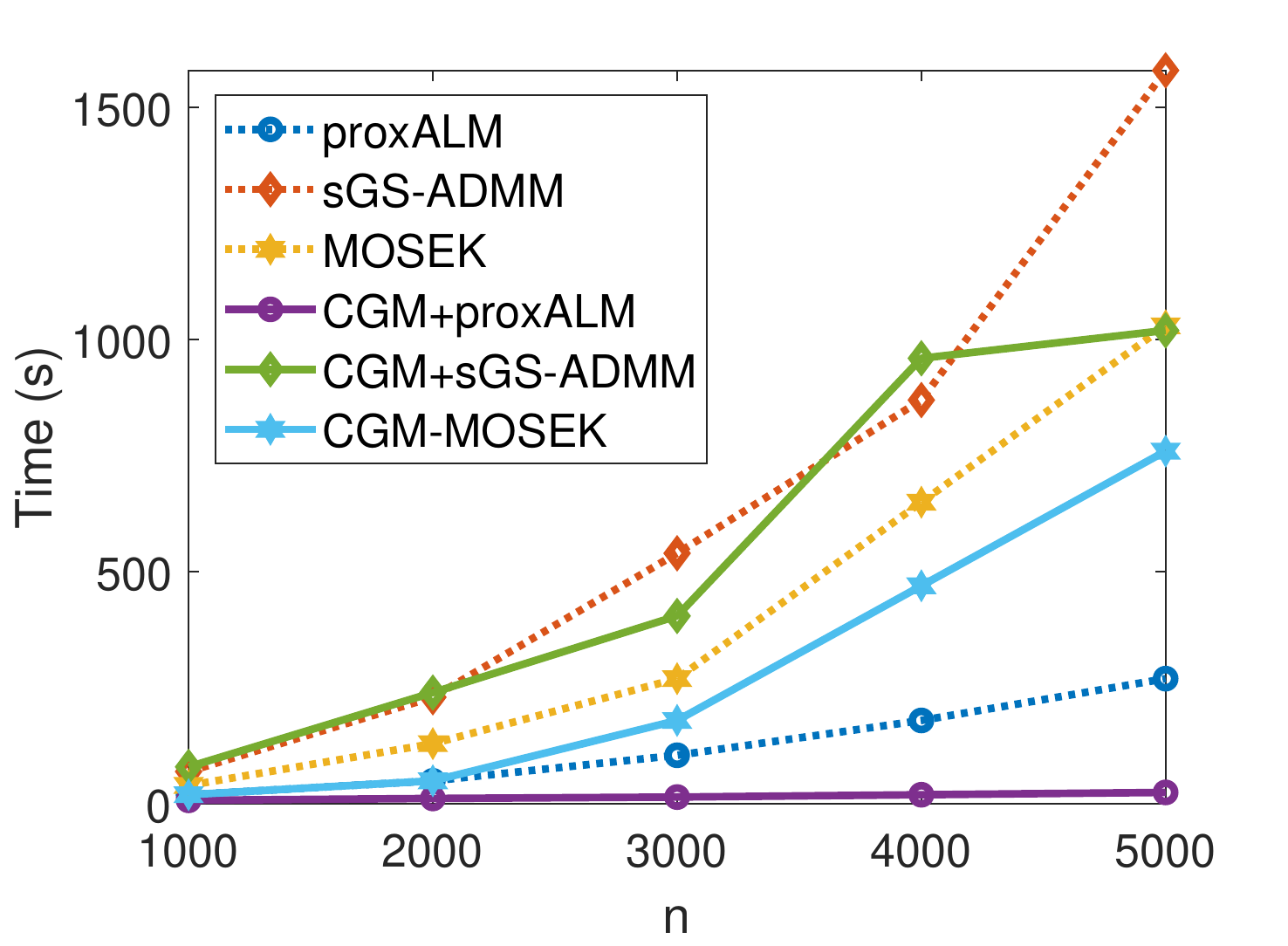}
	\caption{Computational time comparison among {\tt CGM+proxALM}, {\tt CGM+sGS-ADMM}, {\tt CGM+MOSEK}, {\tt proxALM}, {\tt sGS-ADMM}, {\tt MOSEK}}\label{time2}
\end{figure}

From the result, we can see that {\tt CGM+proxALM} outperforms all other algorithms by quite a large margin. For example, for the case $(d,n)=(2,5000)$, {\tt CGM+proxALM} takes $28$ seconds, {\tt proxALM} takes $288$ seconds, while the remaining four algorithms take around $1000$ seconds.

To further demonstrate the performance of the {\tt CGM} with the {\tt proxALM}, we conduct experiments on examples with higher dimensions and larger sample sizes. The results are shown in Table \ref{table_large}. In Algorithm {\tt CGM}, we set $|I^0|=50n$ for $d=2$, and $|I^0|=10n$ for $d=10,20$. In consideration of memory cost, we divide the $n^2$ constraints into ten parts when checking the optimality conditions \eqref{KKT_system} and when selecting the new indices in \textbf{Step 1} of Algorithm {\tt CGM}.

\begin{table}[h]
	\caption{Performance of {\tt CGM+proxALM} on convex regression on instances with large sample sizes. Time is divides into three parts: {\tt CGM} (constraint generation step), {\tt proxALM} (running time of the {\tt proxALM}), and OPT (checking optimality conditions)}
	\label{table_large}
	\tabcolsep 2.5pt
	\begin{threeparttable}
		\begin{tabular}{ccccccl}
			\hline\noalign{\smallskip}
			$(d,n)$
			& {\tt CGM} rounds & $R_{\rm KKT}$ &  $R_{\rm gap}$ & $\hat{R}_{\rm viotol}$ & $\hat{R}_{\rm pinfeas}$  & Time(s)(\,{\tt CGM}$|$\,{\tt proxALM}$|$\,OPT) \\
			\noalign{\smallskip}\hline\noalign{\smallskip}
			$(2,10000)$ & 3 & 1.39e-5 & 1.54e-4 & 1.88e-3 & 5.42e-6 & 36(\,2$|$\,31$|$\,3)  \\
			$(2,50000)$ & 3 & 9.25e-5 & 2.01e-3 & 1.87e-3 &
			1.61e-5 & 281(\,46$|$\,172$|$\,63)  \\
			$(2,100000)$ & 4 & 6.87e-5 & 5.11e-3 &  6.16e-4 &
			4.28e-6 & 1270(\,194$|$\,740$|$\,336)  \\
			$(10,10000)$ & 4 & 1.58e-5 & 1.19e-3 & 1.30e-2 &
			9.13e-6 & 27(\,3$|$\,17$|$\,7)  \\
			$(10,50000)$ & 4 & 8.87e-5 & 9.23e-3 & 1.71e-2 &
			2.24e-5 & 334(\,67$|$\,181$|$\,86)  \\
			$(10,100000)$ & 5 & 2.42e-5 & 9.91e-3 & 7.93e-3 &
			2.65e-6 & 1625(\,328$|$\,855$|$\,442)  \\
			$(20,10000)$ & 3 & 6.57e-5 & 1.89e-3 & 1.17e-2 &
			2.88e-5 & 45(\,3$|$\,37$|$\,5)  \\
			$(20,50000)$ & 4 & 8.04e-5 & 5.59e-3 & 5.14e-3 &
			1.57e-5 & 425(\,73$|$\,265$|$\,87)  \\
			$(20,100000)$ & 5  & 3.88e-5 & 7.90e-3 & 1.31e-3 &
			3.16e-7 & 1614(\,331$|$\,836$|$\,447) \\
			\noalign{\smallskip}\hline
		\end{tabular}
	\end{threeparttable}
\end{table}

Note that in the table, the number of {\tt CGM} rounds includes the initialization step, and $R_{\rm rel}$ is defined as
\begin{align*}
R_{\rm rel} = \frac{|{\rm pobj}-{\rm dobj}|}{1+|{\rm pobj}|+|{\rm dobj}|},
\end{align*}
where ${\rm pobj}$ and ${\rm dobj}$ denote the primal and dual objective function values. For better illustration, we also report the \textit{primal infeasibility} \cite{mazumder2019computational,bertsimas2021sparse} and the \textit{violation tolerance} \cite{bertsimas2021sparse} as
\begin{align*}
\hat{R}_{\rm pinfeas}= \frac{1}{n}\|(A\theta+B\xi)_{-}\|,\quad
\hat{R}_{\rm viotol}=\max|(A\theta+B\xi)_{-}|,
\end{align*}
respectively, where $x_{-}:=\min(x,0)$.

We can see from the table that the {\tt CGM} combined with the {\tt proxALM} performs quite well for estimating the convex regression functions with huge sample sizes. Note that in the table, time is divided into three parts: constraint generation step, running time of the {\tt proxALM} and checking optimality conditions. As the sample size of the instance increases, the time taken by the constraint generation step and checking optimality conditions increases rapidly due to the huge number of $n^2$ linear inequality constraints. For example, for the instance with size $(d,n)=(10,100000)$, we need to solve a constrained QP containing $1.1\times 10^6$ variables and $10^{10}$ linear inequality constraints. From the table we can see that checking the optimality conditions five times cost $442$ seconds while estimating the convex regression function with {\tt CGM+proxALM} only costs $1625$ seconds in total. The long computation time needed to check the optimality conditions for large $n$ is the reason why we choose to add more violated constraints in each round so as to reduce the number of rounds in the constraint generation method. As a comparison, we note that the implementation in \cite{bertsimas2021sparse} of the constraint generation method with each reduced problem solved by {\tt Gurobi} needs around $1$ hour and $11$ rounds of the constraint generation to solve the problem of the same size, but only achieves the accuracy $\hat{R}_{\rm viotol}=0.05$, $\hat{R}_{\rm pinfeas}=0.004$. The success of the proposed {\tt CGM} combined with the {\tt proxALM} lies in two aspects. First, the number of rounds of the constraint generation is highly reduced since we add a relatively large number of violated constraints in each round. Second, the {\tt proxALM} is quite efficient to solve each reduced problem in the {\tt CGM} compared to {\tt Gurobi} or {\tt MOSEK}.

\subsection{Data-driven Lipschitz estimation method}
\label{subsec:Lipschitz}
An important issue in convex regression is over-fitting near the boundary of ${\rm conv}(X_1,\cdots,X_n)$. That is, the norms of the fitted subgradients $\xi_i$'s near the boundary can become arbitrarily large. To deal with this problem, the authors in \cite{lim2014convergence,balazs2015near,mazumder2019computational} used the idea of Lipschitz convex regression. They propose to compute the least squares estimator over the class of convex functions that are uniformly Lipschitz with a given bound, which means that they compute the estimator defined in \eqref{eq:min_CS} with Property $\mathcal{S}$ taking the form of (S3). In practice, the challenge is in choosing the unknown Lipschitz constant in the model based on the given data. Mazumder et al. \cite{mazumder2019computational} choose to estimate the Lipschitz constant by the cross-validation. In this paper, we provide a data-driven Lipschitz estimation method for the Lipschitz convex regression.

For each $X_i$, we first find the $k$-nearest neighbors $\mathcal{N}(X_i)$ of $X_i$, and then define
\begin{align*}
L_i={\rm median}\Big\{\frac{|Y_i-Y_j|}{\|X_i-X_j\|_p},j\in \mathcal{N}(X_i)\Big\},
\end{align*}
where $p=1,2,\infty$ is given. After that, we solve the generalization form of \eqref{convex_LSE_D} as
\begin{equation}\label{generalization_convexLSE}
\begin{aligned}
&\min_{\theta_1,\ldots,\theta_n\in\mathbb{R};  \xi_1,\ldots,\xi_n \in\mathbb{R}^{d}}
\frac{1}{2} \sum_{i=1}^n (\theta_i - Y_i)^2 \\
&{\rm s.t.} \quad \theta_i \geq \theta_j + \langle \xi_j,X_i - X_j\rangle,\quad \ 1\leq i ,j \leq n,\\
&\qquad\ \xi_i\in \mathcal{D}_i,\quad i=1,\cdots,n,
\end{aligned}
\end{equation}
where $\mathcal{D}_i=\{x\in \mathbb{R}^d\mid \|x\|_q\leq L_i\}$ with $1/p+1/q=1$. The proposed {\tt proxALM} can be easily extended to solve \eqref{generalization_convexLSE} by letting $p(\xi)= \sum_{i=1}^n \delta_{\mathcal{D}_i} (\xi_i)$.

We use an example here to demonstrate the performance of Lipschitz convex regression with the data-driven Lipschitz estimation method. Consider the convex function $\psi(x)=2\|x\|_{\infty}+\|x\|^2$. We sample $n=80$ data points uniformly from $[-1,1]^d$ and add the Gaussian noise as stated in Section \ref{sec:numericalcomparison}. The results for $d=1,2$ can be seen in Figure \ref{fig:data_Lip}. When estimating the Lipschitz constant for each data point, we take $k=5$ and $p=q=2$. As shown in the figure, Lipschitz convex regression does reduce the estimation error near the boundary of the convex hull of $X_i$'s.

\begin{figure}[ht]
	\subfigure[$d=1$]{\label{fig:toy1}
		\includegraphics[width=0.45\linewidth]{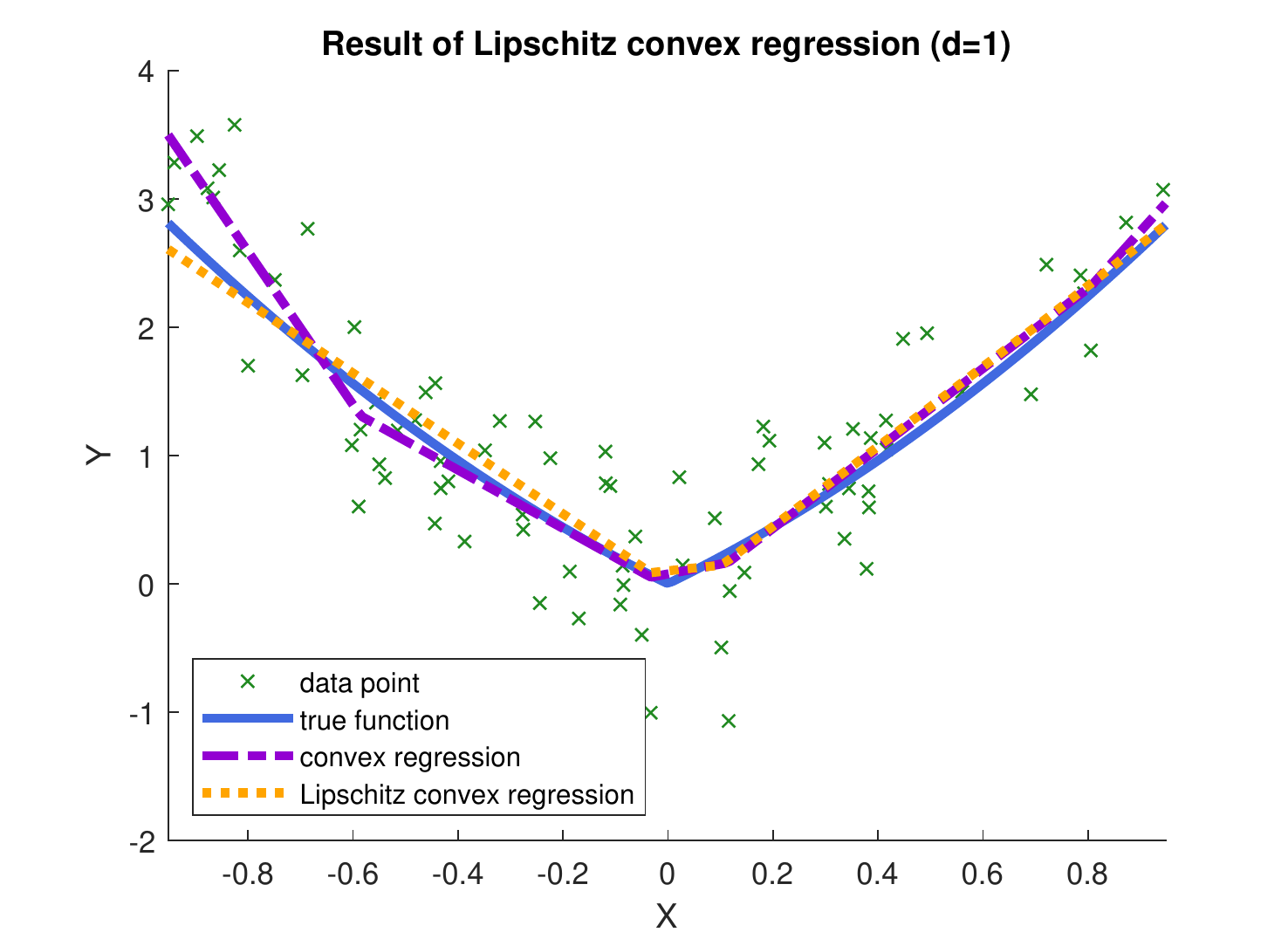}}
	\subfigure[$d=2$]{\label{fig:toy2}
		\includegraphics[width=0.45\linewidth]{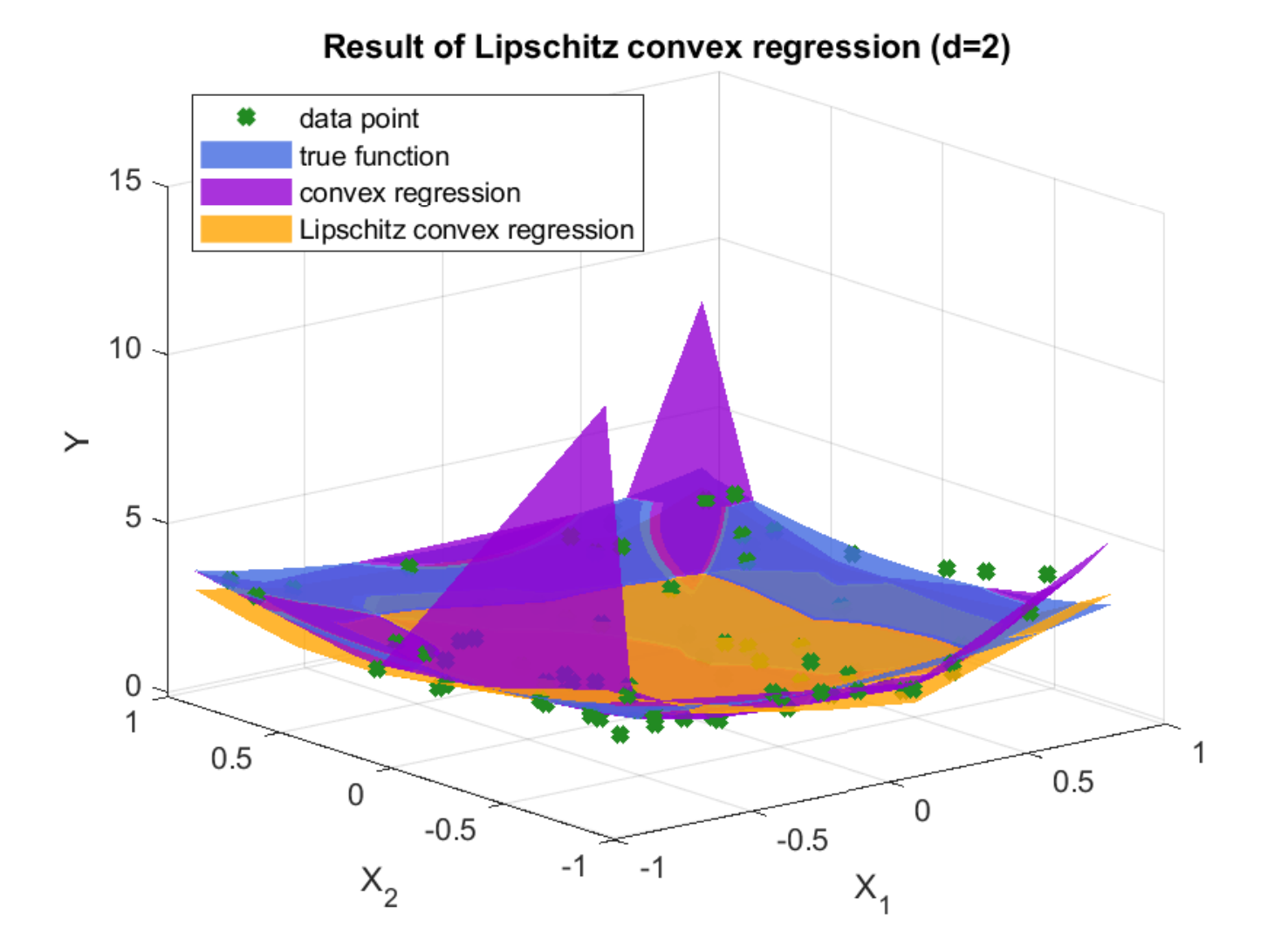}}
	\caption{Result of Lipschitz convex regression with the data-driven Lipschitz estimation method}\label{fig:data_Lip}
\end{figure}

\section{Real applications}
\label{sec:realapplication}
In this section, we apply our framework for estimating the multivariate shape-constrained convex functions in some real applications, namely, pricing of European call options, pricing of basket options, prediction of average weekly wages and estimation of production functions.

\subsection{Option pricing of European call options}
\label{subsec:calloption}
Consider a European call option whose payoff at maturity $T$ is $ (S_T-K)_+$, where $S_T$ is a random variable that stands for the stock price at $T$, and $K$ is the predetermined strike price. We are interested in the option price at time $t$, which is defined as
\begin{align*}
V(S) := \mathbb{E}[e^{-r(T-t)}(S_T-K)_+\mid S_t=S], \quad S>0,
\end{align*}
where $r$ is the risk-free interest rate. Under the Black-Scholes model, we know that the random variable $S_T$ satisfies
\begin{align*}
\log S_T \sim \mathcal{N}\Big(\log S_t+(r-\frac{1}{2}\sigma^2)(T-t),\sigma^2(T-t)\Big),
\end{align*}
where $\sigma$ is the volatility. It is well-known that $V(\cdot)$ is a convex function with $0\leq V'(S)\leq 1$ for $S>0$. Therefore, we can use the shape-constrained convex regression model with Property (S2) to estimate the function $V(\cdot)$.

There are two reasons why we consider this application to demonstrate the numerical performance of our framework. The first reason is that $V(\cdot)$ admits a closed-form solution as
\begin{align*}
V(S)=S\Phi(d_1)-Ke^{-r(T-t)}\Phi(d_2),\quad
d_{1,2} = \frac{\log \frac{S}{K}+(r\pm \frac{1}{2}\sigma^2)(T-t)}{\sigma \sqrt{T-t}},
\end{align*}
where $\Phi(\cdot)$ is the cumulative distribution function of the standard normal distribution. The second reason is that the estimation of function $V(\cdot)$ is commonly-used in pricing American-type options by approximate dynamic programming, see e.g. \cite{longstaff2001valuing}.

In our experiment, we take $t=0.1$, $T=0.4$, $K=10$, $r=0$, $\sigma=0.2$. We sample $200$ data points, denoted as $\{(S_i,V_i)\}_{i=1}^{200}$. For each $S_i$, $\log S_i$ is sampled following the distribution $\mathcal{N}(\log K+(r-\sigma^2/2)t,\sigma^2t)$, and the corresponding $V_i$ is sampled such that $\log V_i$ follows the distribution $\mathcal{N}(\log S_i+(r-\sigma^2/2)(T-t),\sigma^2(T-t))$. For comparison, we apply several regression models to estimate the conditional expectation function $V$: linear regression, least squares linear regression on a set of basis functions (e.g. weighted Laguerre basis in \cite{longstaff2001valuing}), unconstrained convex regression and convex regression with box constraint ($L=0$, $U=1$).

\begin{figure}[ht]
	\centering
	\includegraphics[width=0.8\linewidth]{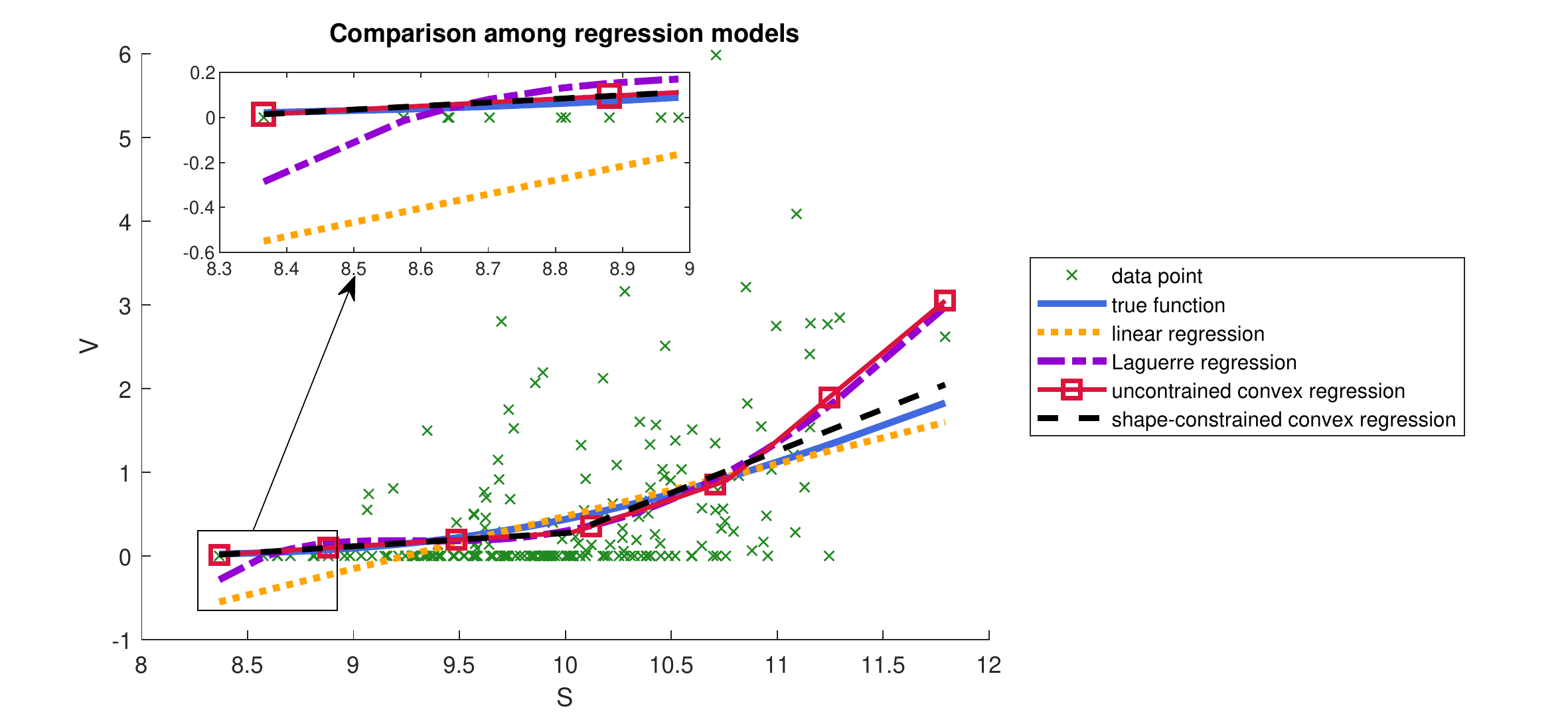}
	\caption{Results of the estimation of the option pricing of European call option}\label{toy_Lip}
\end{figure}

The comparison among four regression models is shown in Figure \ref{toy_Lip}. We can see that the performance of shape-constrained convex regression is the best. The poor performance of the other three regression models appears near the boundary in three aspects. The first is that the results from linear regression and Laguerre regression take negative values when $S$ is small, which contradicts the fact that $V$ is always non-negative. The second is that the Laguerre regression function can not obtain the required convex property. The last is that when $S$ is large, the gradients of the results obtained by Laguerre regression and unconstrained convex regression are too large. To deal with this over-fitting problem, we add the box constraint to the convex regression, which comes from prior knowledge. We can see that the result of shape-constrained convex regression performs better near the boundary, which demonstrates the advantage of the additional shape constraint.

\subsection{Option pricing of basket options}
\label{subsec:basketoption}
To test multivariate convex regression problems, we consider pricing the basket option on weighted average of $M$ underlying assets.

\paragraph{Basket option of two European call options ($M=2$).}
We first consider a basket option of two European call options, where
\begin{align*}
V(x,y)=\mathbb{E}[e^{-r(T-t)}(w_1 S_T^1+w_2S_T^2-K)_{+}\mid S_t^1=x,S_t^2=y],\quad x,y>0,
\end{align*}
and $w=(w_1,w_2)^T$ is a given weight vector such that $w\geq 0$, $w_1+w_2=1$. The random variables $S_T^1$ and $S_T^2$ satisfy
\begin{align*}
\left(\begin{aligned}
&\log S_T^1\\
&\log S_T^2
\end{aligned}
\right)\sim \mathcal{N}
\left(\left(\begin{aligned}
&\log S_t^1+(r-\sigma_1^2/2)(T-t)\\
&\log S_t^2+(r-\sigma_2^2/2)(T-t)
\end{aligned}
\right),(T-t)\begin{pmatrix}
\sigma_1^2 & \rho\sigma_1\sigma_2\\[0.4em]
\rho\sigma_1\sigma_2 &\sigma_2^2
\end{pmatrix}
\right),
\end{align*}
where $\sigma_1$, $\sigma_2$ are volatilities, $ \rho$ is the correlation coefficient. One can show that $V(\cdot,\cdot)$ is convex with $0\leq \nabla V(x,y)\leq w$, and the proof can be found in Appendix \ref{appendix: calloption}. We can apply the multivariate shape-constrained convex regression model with Property (S2) ($L=0$, $U=w$) to estimate the function $V(\cdot,\cdot)$.

Note that the convex function $V(\cdot,\cdot)$ does not admit a closed-form solution. However, it is the solution of the Black-Scholes PDE, which can be solved by the finite difference method. The details of the corresponding convection-diffusion equation and the finite difference method for solving it could be found in Appendix \ref{appendix: FDM}. We use the solution obtained by the finite difference method as the benchmark.

In the experiment, we take $r=0$, $\rho=0.1$, $\sigma_1=0.2$, $\sigma_2=0.3$, $K=10$, $t=0$, $T=0.5$, $w_1=w_2=0.5$. We sample $200$ data points, denoted as $\{(S_i,V_i)\}_{i=1}^{200}$, where $S_i$ follows the uniform distribution on the open interval $(0,5K)\times (0,5K)$ and $V_i$ follows the distribution
\begin{align*}
\mathcal{N}
\left(\log S_i+(T-t)\left(\begin{aligned}
&r-\sigma_1^2/2\\
&r-\sigma_2^2/2
\end{aligned}
\right),(T-t)\begin{pmatrix}
\sigma_1^2 & \rho\sigma_1\sigma_2\\[0.4em]
\rho\sigma_1\sigma_2 &\sigma_2^2
\end{pmatrix}
\right).
\end{align*}
The numerical result is shown in Figure \ref{fig:basket}. For better illustration, we also plot the absolute error and relative error of the results of the unconstrained convex regression and shape-constrained convex regression. As we can see, the shape-constrained convex regression performs much better than unconstrained convex regression, especially near the boundary.

\begin{figure}[ht]
	\centering
	\subfigure[]{\label{fig:basket1}
		\includegraphics[width=0.8\linewidth]{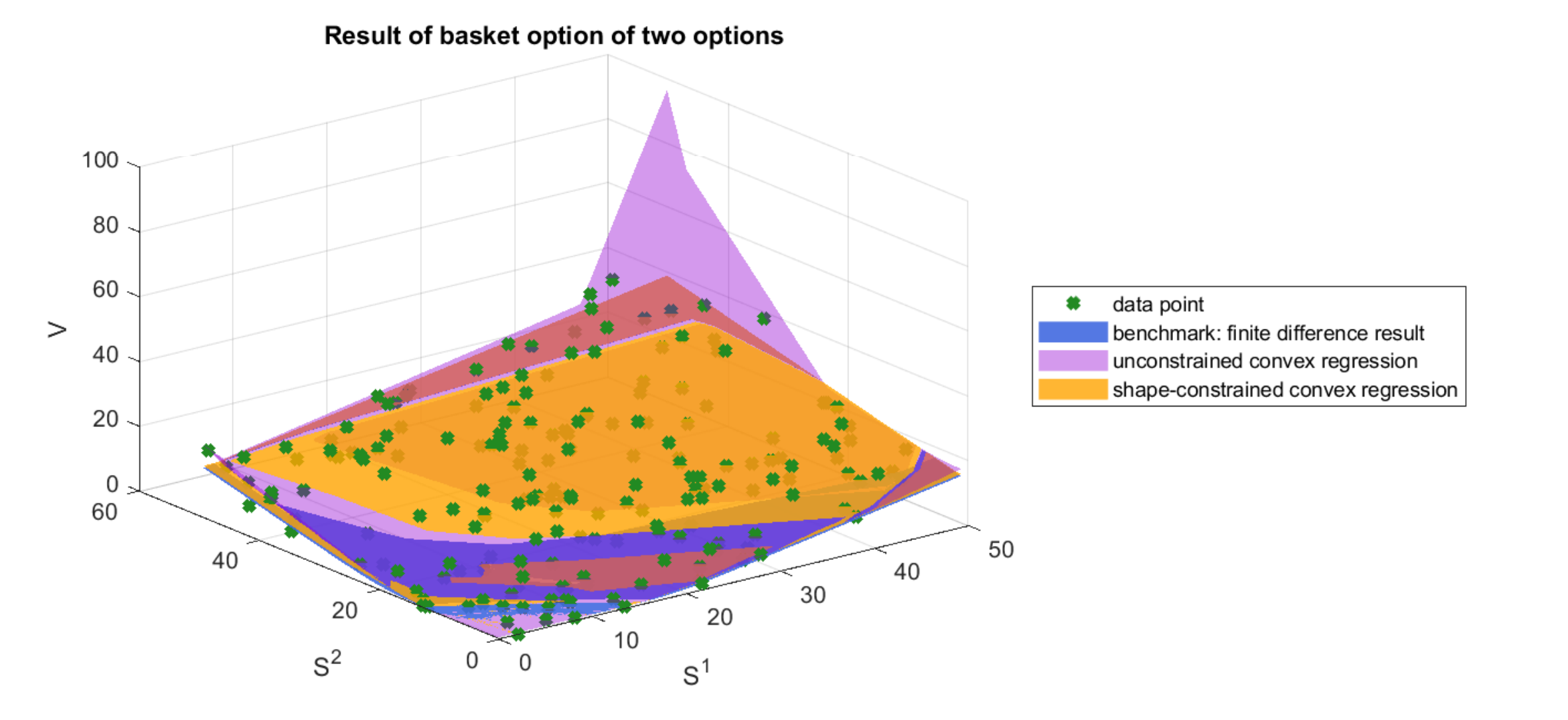}}\\
	\subfigure[]{\label{fig:basket2}
		\includegraphics[width=0.485\linewidth]{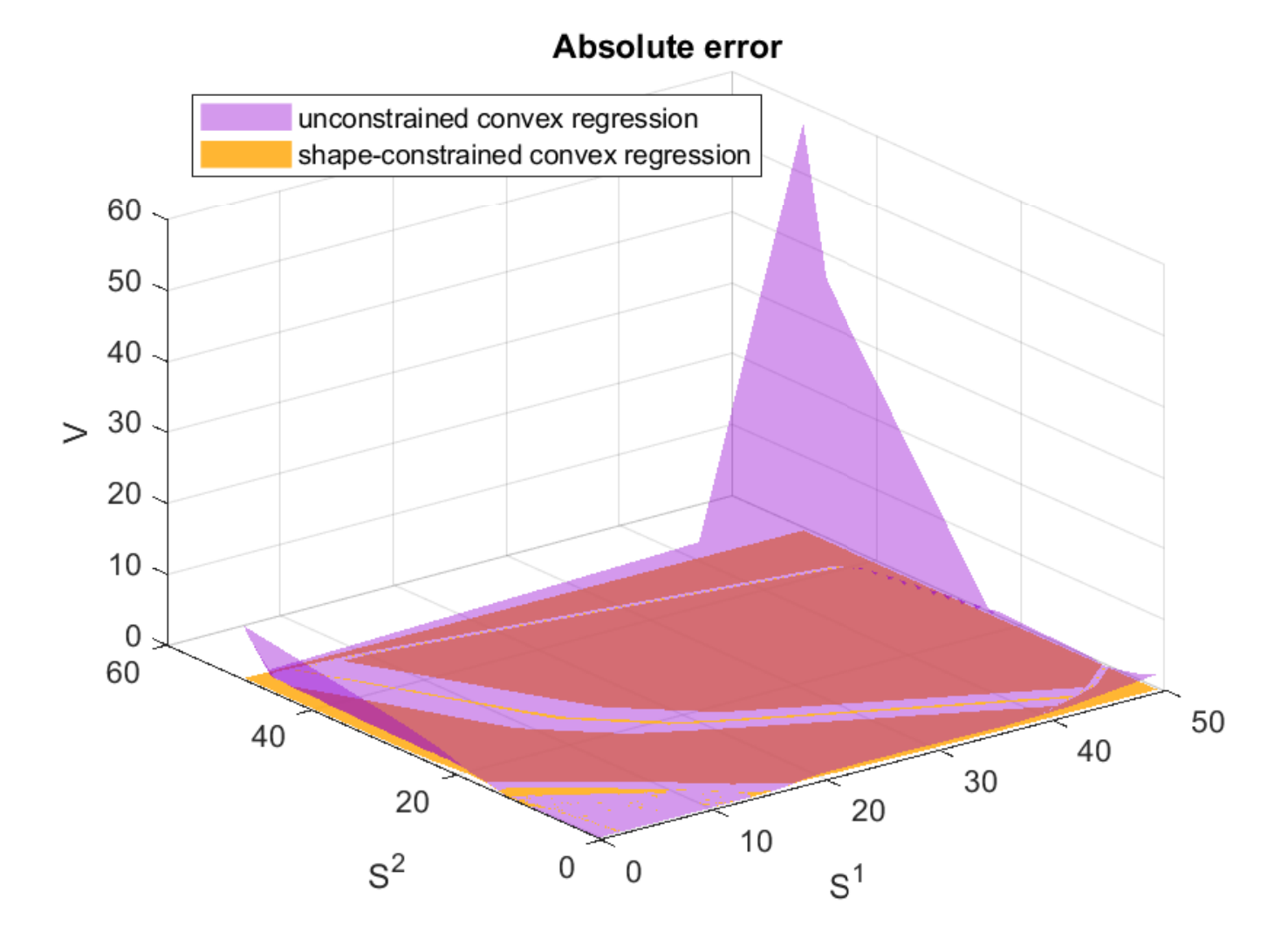}}
	\subfigure[]{\label{fig:basket3}
		\includegraphics[width=0.485\linewidth]{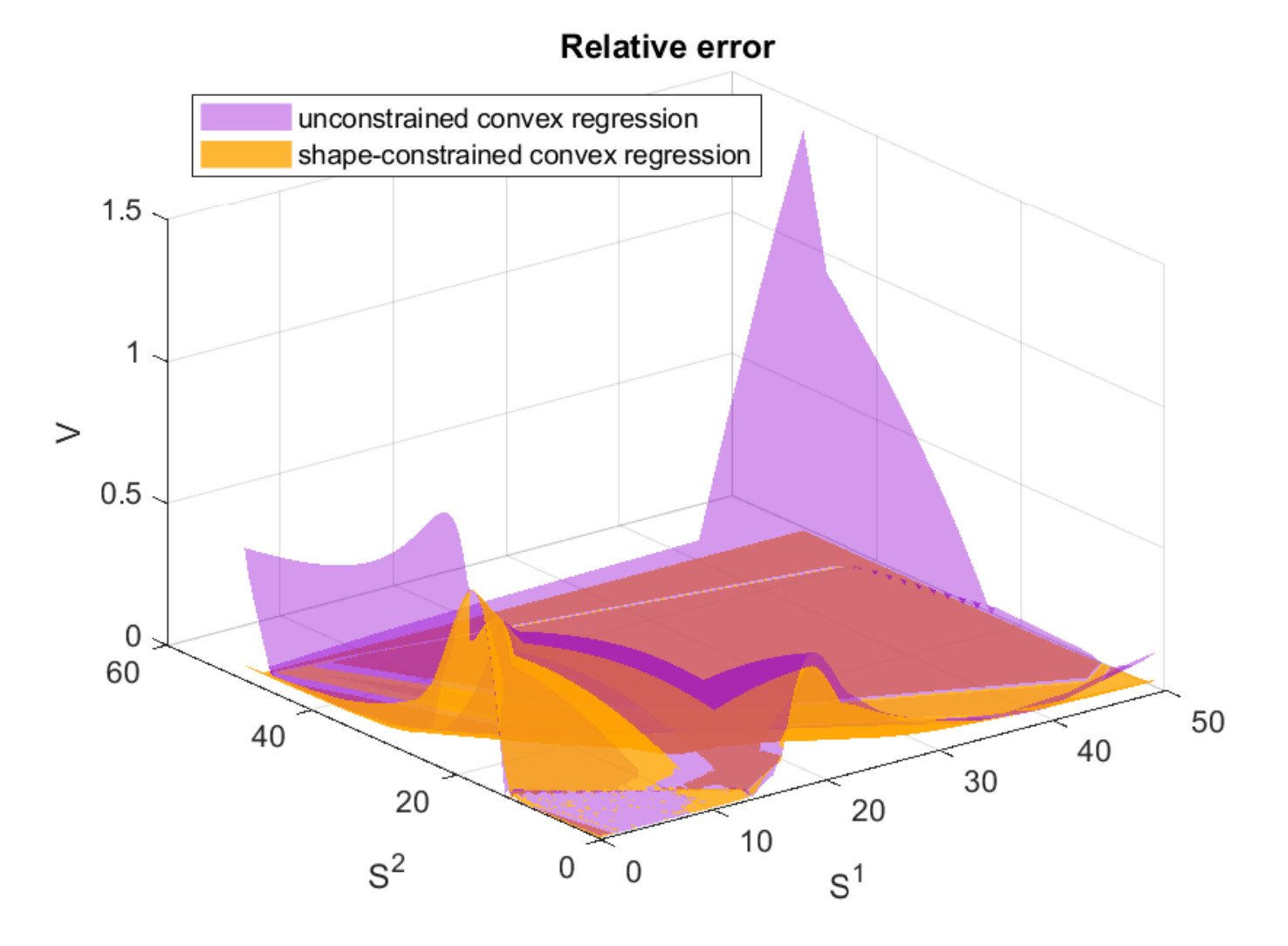}}
	\caption{Result of the estimation of the option pricing of basket option ($M=2$)}\label{fig:basket}
\end{figure}

\paragraph{Basket option of more underlying assets ($M>2$).}
The basket option in practice always contains many underlying assets, possibly greater than two. The finite difference method is very time-consuming when solving the $3$-dimensional convection-diffusion equation, and even impossible to be applied to higher dimensional cases due to the curse of dimensionality. For $M>2$, researchers tend to apply the Monte Carlo simulation to estimate the convex function associated with the basket options. Therefore, we treat the solution obtained by the Monte Carlo simulation as the benchmark.

To demonstrate the performance of the shape-constrained convex regression, we design the experiments for estimating the basket option for $M=5$ and $M=10$. Specifically, we consider a basket option of $M$ European call options, which is defined as: for any $x_1,\cdots,x_M>0$,
\begin{align*}
&V(x_1,\cdots,x_M)\\
&=\mathbb{E}[e^{-r(T-t)}(w_1 S_T^1+\cdots+w_MS_T^M-K)_{+}\mid S_t^1=x_1,\cdots,S_t^M=x_M],
\end{align*}
where $w=(w_1,\cdots,w_M)^T$ is a given weight vector such that $w\geq 0$, $w_1+\cdots+w_M=1$. The random variables $S_T^1,\cdots,S_T^M$ satisfy

\begin{align*}
\begin{pmatrix}
\! \log S_T^1\!  \\[0.4em]
\! \vdots  \! \\[0.4em]
\! \log S_T^M\!
\end{pmatrix}\!\!\sim \!\mathcal{N}
\left(\begin{pmatrix}
\! \log S_t^1\! + \! (r-\frac{\sigma_1^2}{2})(T-t)\! \\[0.4em]
\vdots \\[0.4em]
\! \log S_t^M \! + \! (r-\frac{\sigma_M^2}{2})(T-t) \!
\end{pmatrix}\!,(T-t)\! \begin{pmatrix}
\sigma_1^2 & \cdots & \! \rho\sigma_1\sigma_M \! \\[0.4em]
\vdots & \ddots & \vdots\\[0.4em]
\! \rho\sigma_1\sigma_M \! & \cdots  &\sigma_M^2
\end{pmatrix}
\right),
\end{align*}
where $\sigma_1,\cdots,\sigma_M$ are volatilities, $\rho$ is the correlation coefficient. Then $V$ is convex with $0\leq \nabla V\leq w$. We apply the multivariate shape-constrained convex regression model with Property (S2) ($L=0$, $U=w$) to do the estimation.

\begin{table}[ht]
	\parbox{0.45\linewidth}{
		\caption{Estimation of basket option with $M=5$} \label{table_M5}
		\begin{tabular}{cccc}
			\hline\noalign{\smallskip}
			Model & $n$ & MSE &  Time \\
			\noalign{\smallskip}\hline\noalign{\smallskip}
			\multirow{3}*{\tabincell{c}{\tt UC}}
			& 200 & 5.56e+1 & 00:00:07 \\
			& 400 & 1.42e+1 & 00:00:27 \\
			& 600 & 7.41e+1 & 00:00:22 \\
			\noalign{\smallskip}\hline\noalign{\smallskip}
			\multirow{3}*{\tabincell{c}{\tt SC}}
			& 200 & 4.07e-1 & 00:00:12 \\
			& 400 & 3.86e-1 & 00:00:51 \\
			& 600 & 5.95e-1 & 00:00:27 \\
			\noalign{\smallskip}\hline
		\end{tabular}
	}
	\hfill
	\parbox{0.45\linewidth}{
		\caption{Estimation of basket option with $M=10$} \label{table_M10}
		\begin{tabular}{cccc}
			\hline\noalign{\smallskip}
			Model & $n$ & MSE &  Time \\
			\noalign{\smallskip}\hline\noalign{\smallskip}
			\multirow{3}*{\tabincell{c}{\tt UC}}
			& 200 & 2.05e+1 & 00:00:12 \\
			& 400 & 4.06e+1 & 00:00:10 \\
			& 600 & 5.98e+1 & 00:00:20 \\
			\noalign{\smallskip}\hline\noalign{\smallskip}
			\multirow{3}*{\tabincell{c}{\tt SC}}
			& 200 & 2.21e+0 & 00:00:35 \\
			& 400 & 1.32e+0 & 00:00:27 \\
			& 600 & 1.00e+0 & 00:00:42 \\
			\noalign{\smallskip}\hline
		\end{tabular}
	}
\end{table}

In the experiment, we set $r=0$, $\rho=0.1$, $K=10$, $t=0$, $T=0.5$, $w_i=1/M$, $\sigma_i=0.2+0.025(i-1)$, $i=1,\cdots,M$. We sample $n$ data points as the case for $M=2$. To illustrate the performance of our procedure, we uniformly generate $1000$ test points in the range $(0,5K)^M$. At each test point, we use the Monte Carlo simulation with $10^5$ samples to compute the ``true" function value. We summarize the results of $M=5$ and $M=10$ in Table \ref{table_M5} and Table \ref{table_M10}, respectively. In the tables, ``UC" represents the unconstrained convex regression, ``SC" represents the shape-constrained convex regression, and ``MSE" represents the mean squared error. As one can see, the shape-constrained convex regression takes a little bit longer time to be solved than the unconstrained convex regression, but get a much better estimated result.

\subsection{Prediction of average weekly wages}
\label{subsec:weeklywages}

We consider the problem of estimating the average weekly wages based on years of education and experience as given in \cite[Chapter 10]{ramsey2012statistical}. This dataset is from 1988 March U.S. Current Population Survey, which can be downloaded as ex1029 in the R package Sleuth2. The set contains weekly wages in 1987 for a sample of 25632 males between the age of 18 and 70 who worked full-time, with their years of education and years of experience. After averaging over a grid with cell size of $1$ year by $1$ year and ignoring the outliers, we finally come to a dataset with $857$ samples.

A reasonable assumption for this application is that the wages are concave in years of experience and a transformation of years of education, i.e., $1.2^{\mbox{years of education}}$, according to \cite{hannah2013multivariate}. The estimated result is shown in Figure \ref{fig:ex1029}. The shape-constrained convex regression problem is solved within $1$ minute.

\begin{figure}[ht]
	\subfigure[Estimated function values at each $X_i$]{\label{fig:ex1}
		\includegraphics[width=0.485\linewidth]{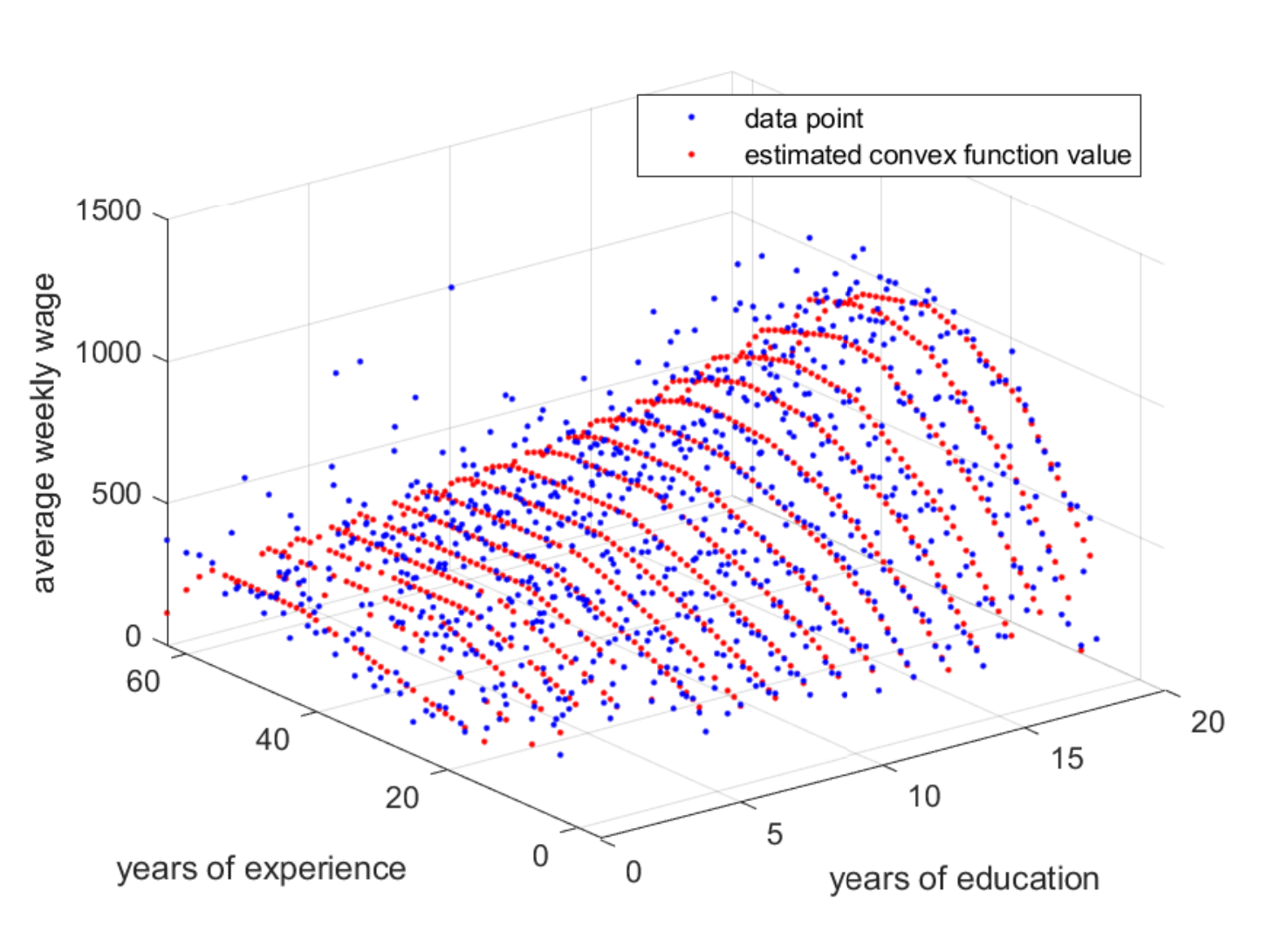}}
	\subfigure[Visualization of the function]{\label{fig:ex2}
		\includegraphics[width=0.485\linewidth]{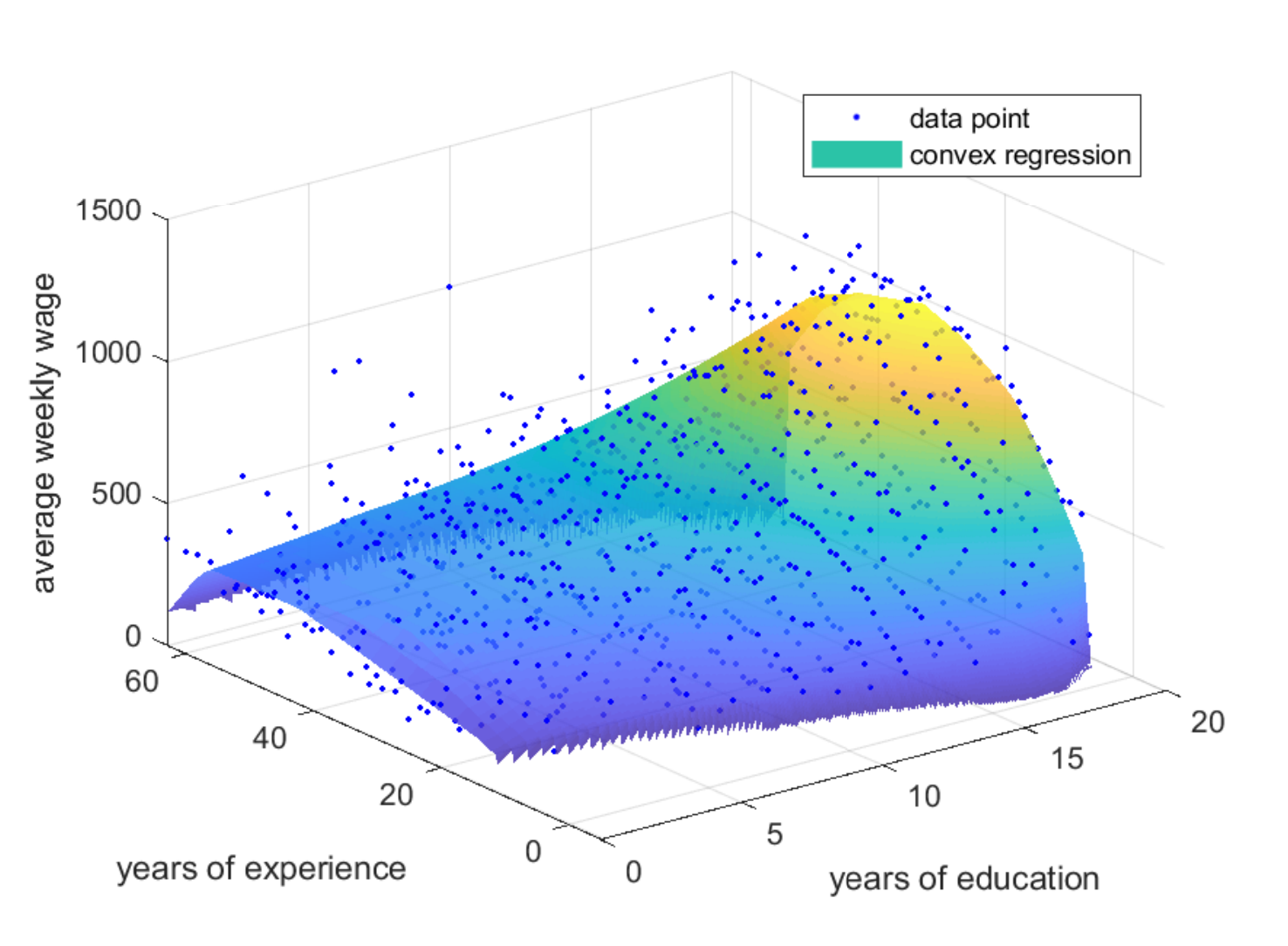}}
	\caption{Results of the estimation of average weekly wages}\label{fig:ex1029}
\end{figure}

\subsection{Estimation of production functions}
\label{subsec:productionfun}
In economics, a production function gives the technological relation between quantities of inputs and quantities of output of goods. Production functions are known to be concave and non-decreasing \cite{hanoch1972testing,varian1984nonparametric,yagi2018shape}. We apply our framework to estimate the production function for the plastic industry (CIIU3 industry code: 2520) in the year 2011. The dataset can be downloaded from the website of Chile's National Institute of Statistics. As in the setting in \cite{yagi2018shape}, we use labor and capital as the input variables, and value added as the output variable. In the dataset, labor is measured as the total man-hours per year, capital and value added are measured in millions of Chilean peso. After removing some outliers, the dataset contains 250 samples. The numerical results can be found in Figure \ref{fig:prod1}. The shape-constrained convex regression problem is solved within $3$ seconds.

\begin{figure}[h]
	\subfigure[Correlogram of the function]{\label{fig:prod4_3}
		\includegraphics[width=0.45\linewidth]{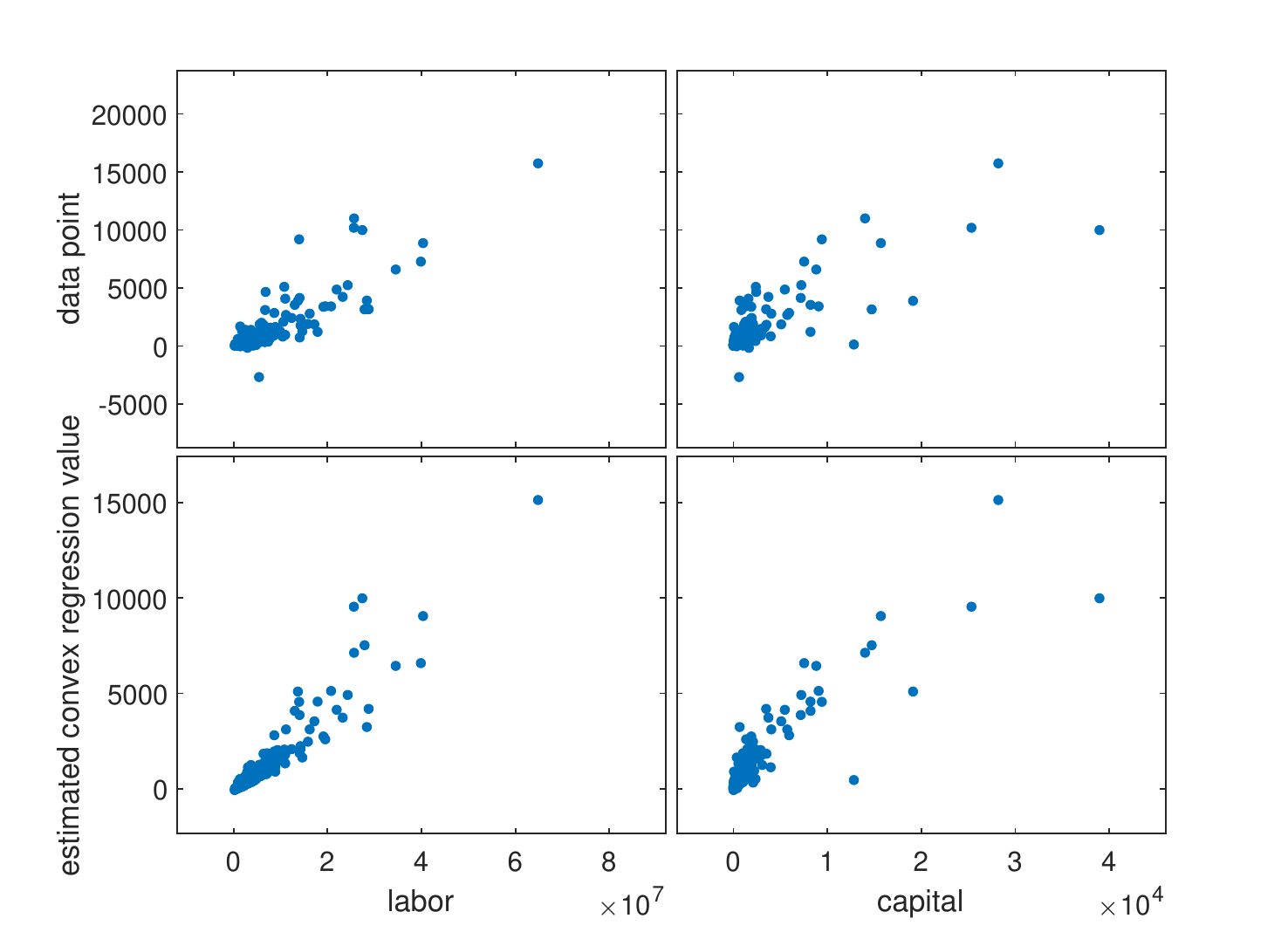}}
	\subfigure[Visualization of the function]{\label{fig:prod4_4}
		\includegraphics[width=0.45\linewidth]{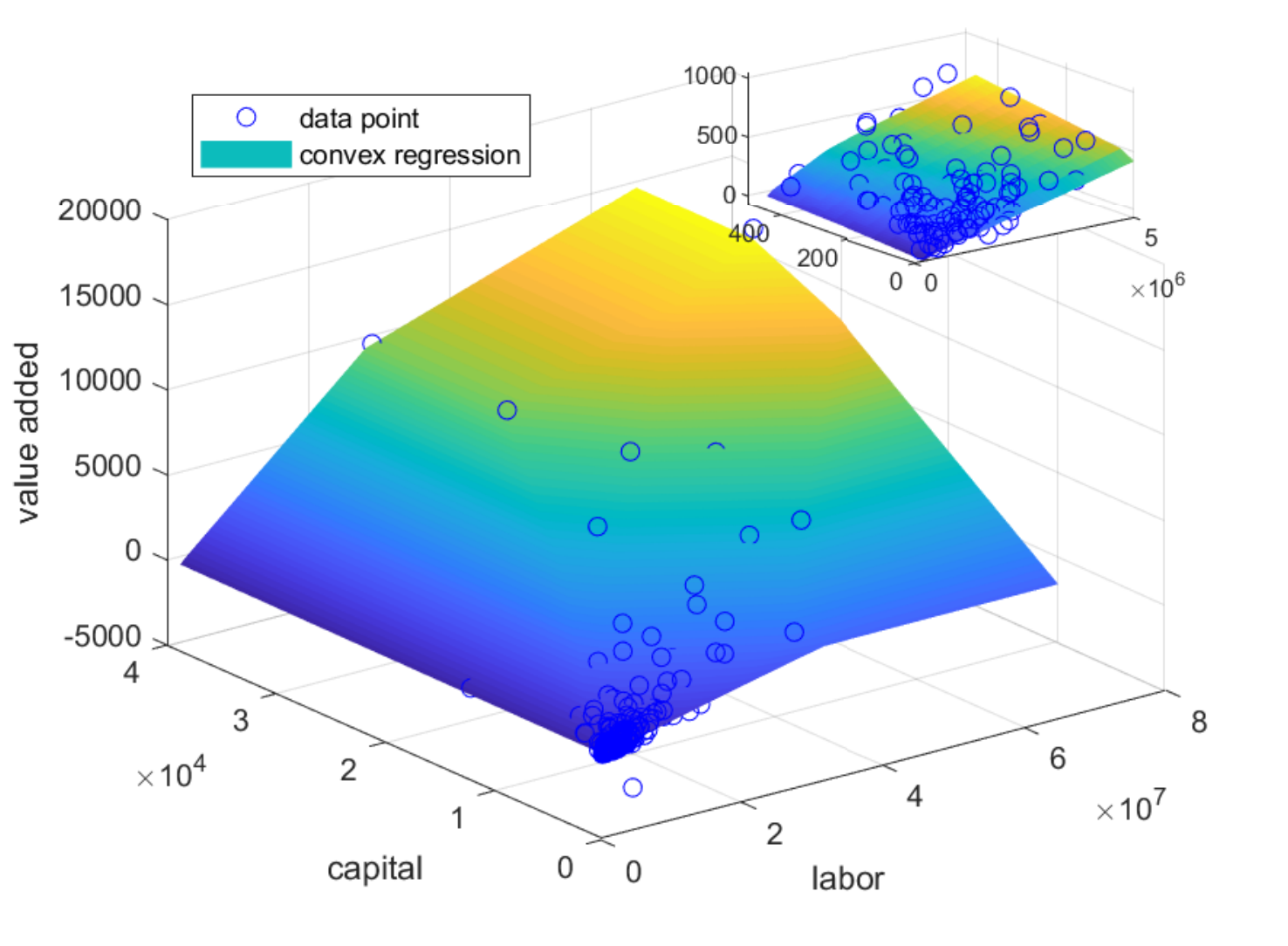}}	
	\caption{Result of estimation of production function of plastic in Chile}\label{fig:prod1}
\end{figure}

\begin{figure}[h]
	\subfigure[Correlogram of the function]{\label{fig:prod9_4}
		\includegraphics[width=0.45\linewidth]{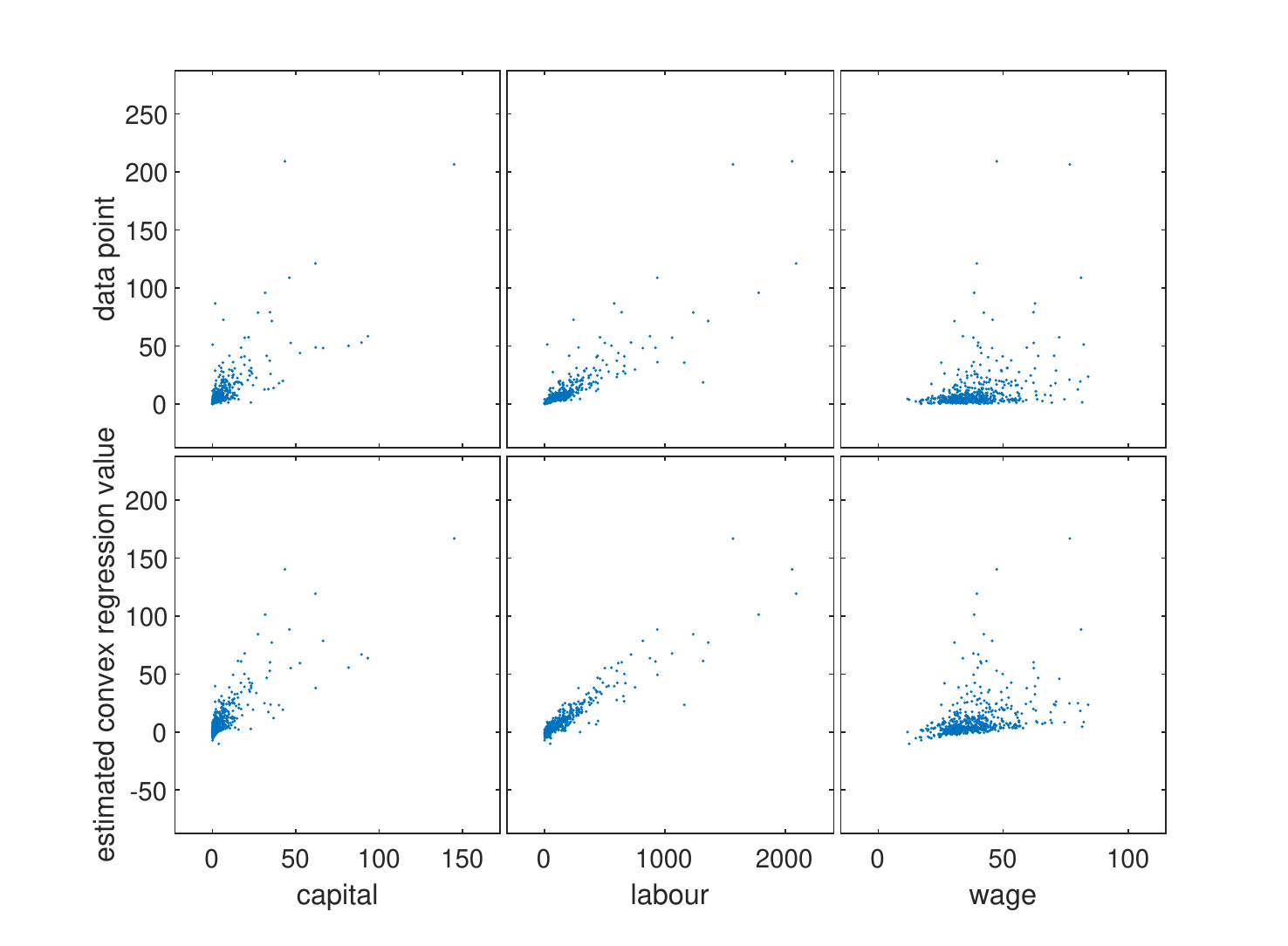}}
	\subfigure[Visualization of the function]{\label{fig:prod9_5}
		\includegraphics[width=0.45\linewidth]{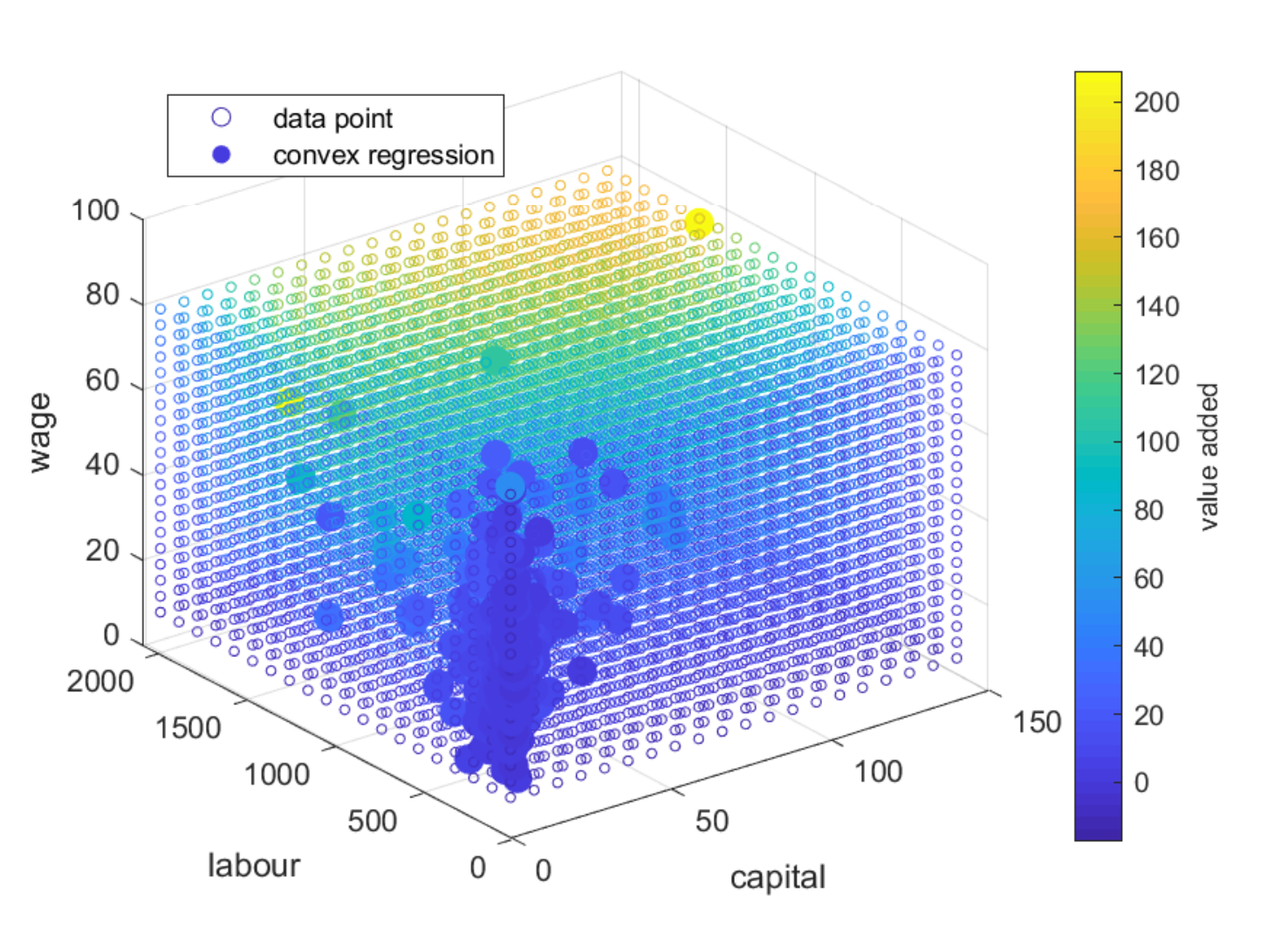}}	
	\caption{Result of estimation of production function of Belgian firms}\label{fig:prod2}
\end{figure}

Another example is to explain the labour demand of 569 Belgian firms for the year 1996. The dataset can be obtained from \cite{verbeek2008guide}\footnote{{\tt https://www.wiley.com/legacy/wileychi/verbeek2ed/datasets.html}}. The dataset includes the total number of employees (labour), their average wage (wage), the amount of capital (capital) and a measure of output (value added). The labour is measured as the number of workers, the wage is measured in units of 1000 euro, and the capital and value added are measured in units of a million euro. After removing the outliers, the dataset contains 562 samples. The result can be found in Figure \ref{fig:prod2} and the problem is solved in $22$ seconds.

\section{Conclusion and future work}
\label{sec:futurework}
In this paper, we provide a unified framework for computing a least squares estimator for the multivariate shape-constrained convex regression function. In addition, we propose an efficient algorithm, which is a semismooth Newton based proximal augmented Lagrangian method, to solve the large-scale constrained QP in the framework. Moreover, in order to further accelerate the computation under the large-sample setting, we design a practical implementation of the constraint generation method, where the reduce problem in each round is solved by the proposed {\tt proxALM}. We conduct extensive numerical experiments to demonstrate the efficiency and robustness of our proposed {\tt proxALM}, as well as the superior performance of the acceleration with the constraint generation method.

\section*{Acknowledgements}
The authors would like to thank Professor Necdet S. Aybat for helpful clarifications on his work in \cite{aybat2014parallel}.

\appendix
\section*{Appendices}
\section{Derivation of the proximal mapping and generalized Jacobian associated with $\mathcal{D}=\{x\in \mathbb{R}^d\mid \|x\|_{1}\leq L\}$}
\label{appendix: derive_1norm}
For $x\in \mathbb{R}^d$, let $P_x={\rm Diag}({\rm sign}(x))\in \mathbb{R}^{d\times d}$, then
\begin{align*}
\Pi_\mathcal{D}(x) &= \underset{y\in\mathbb{R}^d}{\arg\min}\
\Big\{\frac{1}{2}\|y-x\|^2\mid \|y\|_1\leq L\Big\}\\
&= LP_x \Big( \underset{y\in\mathbb{R}^d}{\arg\min}\  \Big\{\frac{1}{2}\|y-P_x x/L\|^2\mid e_d^T y \leq 1,y\geq 0\Big\}\Big)\\
&=\left\{\begin{aligned}
&x && \mbox{if}\ \|x\|_1\leq L, \\
&L P_x \Pi_{\Delta_d}(P_x x/L)&& \mbox{otherwise,}
\end{aligned}\right.
\end{align*}
where $\Delta_d=\{x\in \mathbb{R}^d\mid e_d^T x=1,x\geq 0\}$. To derive the generalized Jacobian of $\Pi_\mathcal{D}(\cdot)$, we need the generalized Jacobian of $\Pi_{\Delta_d}(\cdot)$. Following the idea in \cite{han1997newton,li2020efficient}, we can explicitly compute an element of the generalized Jacobian of $\Pi_{\Delta_d}(\cdot)$ at $P_x x/L$. Let $K$ be the set of index $i$ such that $(\Pi_{\Delta}(P_x x/L))_i=0$. Then
\begin{align*}
\widetilde{H}=I_d-\begin{bmatrix}
I_K^T & e_d
\end{bmatrix}\Big(
\begin{bmatrix}
I_K \\[0.4em]
e_d^T
\end{bmatrix}
\begin{bmatrix}
I_K^T & e_d
\end{bmatrix}
\Big)^{\dagger}\begin{bmatrix}
I_K \\[0.4em]
e_d^T
\end{bmatrix}
\end{align*}
is an element in $\partial \Pi_{\Delta_d}(P_x x/L)$, where $I_K$ means the matrix consisting of the rows of the identity matrix $I_d$, indexed by $K$. After some algebraic computation, we can see
\begin{align*}
\widetilde{H}&=I_d-\begin{bmatrix}
I_K^T & e_d
\end{bmatrix}
\begin{bmatrix}
I_{|K|}+\frac{1}{n-|K|}e_{|K|} e_{|K|}^T & -\frac{1}{n-|K|}e_{|K|} \\[0.4em]
-\frac{1}{n-|K|}e_{|K|}^T & \frac{1}{n-|K|}
\end{bmatrix}
\begin{bmatrix}
I_K \\[0.4em]
e_d^T
\end{bmatrix}={\rm Diag}(r)-\frac{1}{{\rm nnz}(r)}rr^T,
\end{align*}
where $r\in \mathbb{R}^d$ is defined as $r_i=1$ if $(\Pi_{\Delta}(P_x x/L))_i\neq 0 $ and $r_i=0$ otherwise. Therefore,
\begin{align*}
H\in \partial \Pi_\mathcal{D}(x),\quad \mbox{where } H=\left\{\begin{aligned}
&I_d && \mbox{if}\ \|x\|_1\leq L, \\
&P_x \widetilde{H}  P_x && \mbox{otherwise}.
\end{aligned}\right.
\end{align*}

\section{A symmetric Gauss-Seidel based alternating direction method of multipliers ({\tt sGS-ADMM}) for \eqref{reformulated_problem}}
\label{appendix:sGSADMM}

In the literature, popular first-order methods based on the framework of the alternating direction method of multipliers have been applied to solve \eqref{reformulated_problem}.  In \cite[Section A.2]{mazumder2019computational}, the problem \eqref{reformulated_problem} is reformulated as
\begin{align*}
\min_{\theta\in \mathbb{R}^n,\xi\in \mathbb{R}^{dn},\eta\in \mathbb{R}^{n^2}}\ \Big\{\frac{1}{2}\|\theta-Y\|^2+p(\xi)+\delta_{+}(\eta)\mid A\theta+B\xi-\eta=0\Big\}.
\end{align*}
The corresponding augmented Lagrangian function for a fixed $\sigma>0$ is defined by
\begin{align*}
\widetilde{\mathcal{L}}_{\sigma}(\theta,\xi,\eta;u)=\frac{1}{2}\|\theta-Y\|^2+p(\xi)+\delta_{+}(\eta)+\frac{\sigma}{2}\|A\theta+B\xi-\eta-\frac{u}{\sigma}\|^2-\frac{1}{2\sigma}\|u\|^2.
\end{align*}
Then the two-block {\tt ADMM} is given as
\begin{align*}
\left\{\begin{aligned}
&\xi^{k+1} = \arg\min\  \widetilde{\mathcal{L}}_{\sigma}(\theta^{k},\xi,\eta^{k};u^{k})=\arg\min\ \Big\{
p(\xi)+\frac{\sigma}{2}\|A\theta^k+B\xi-\eta^k-\frac{u^k}{\sigma}\|^2\Big\},\\
&(\theta^{k+1},\eta^{k+1}) = \arg\min\ \widetilde{\mathcal{L}}_{\sigma}(\theta,\xi^{k+1},\eta;u^{k}),\\
& u^{k+1}=u^k-\tau\sigma(A\theta^{k+1}+B\xi^{k+1}-\eta^{k+1}),
\end{aligned}\right.
\end{align*}
where $\tau\in(0,(1+\sqrt{5}/2))$ is a given step length. As described in \cite{mazumder2019computational}, the subproblem of updating $\xi$ is separable in the variables $\xi_i$'s for $i=1,\cdots,n$, and the update of each $\xi_i$ can be solved by using an interior point method. The update of $\theta$ and $\eta$ is performed by using a block coordinate descent method, which may converge slowly. One can also apply the directly extended three-block {\tt ADMM} algorithm as in \cite[Section 2.1]{mazumder2019computational} to solve \eqref{reformulated_problem}, and the steps are given by
\begin{align*}
\left\{\begin{aligned}
&\xi^{k+1} = \arg\min\ \widetilde{\mathcal{L}}_{\sigma}(\theta^{k},\xi,\eta^{k};u^{k}),\\
&\theta^{k+1} = \arg\min\ \widetilde{\mathcal{L}}_{\sigma}(\theta,\xi^{k+1},\eta^k;u^{k}),\\
&\eta^{k+1} = \arg\min\ \widetilde{\mathcal{L}}_{\sigma}(\theta^{k+1},\xi^{k+1},\eta;u^{k}),\\
& u^{k+1}=u^k-\tau\sigma(A\theta^{k+1}+B\xi^{k+1}-\eta^{k+1}).
\end{aligned}\right.
\end{align*}
In the directly extended three-block {\tt ADMM}, the subproblem of updating $\theta$ can be computed by solving a linear system, and that of updating $\eta$ can be solved by the projection onto $\mathbb{R}_{+}^{n^2}$. However, it is shown in \cite{chen2016direct} that the directly extended three-block {\tt ADMM} may not be convergent. Thus it is desirable to employ an algorithm that is guaranteed to converge.

In this section, we aim to present an efficient and convergent multi-block {\tt ADMM} for solving \eqref{reformulated_problem}. The authors in \cite{chen2017efficient} have proposed an inexact symmetric Gauss-Seidel based multi-block {\tt ADMM} for solving high-dimensional convex composite conic optimization problems, and it was demonstrated to perform better than the possibly nonconvergent directly extended multi-block {\tt ADMM}. To adapt the {\tt sGS-ADMM} in \cite{chen2017efficient} to solve \eqref{reformulated_problem}, we first rewrite \eqref{reformulated_problem} as follows:
\begin{align}
\min_{\theta\in \mathbb{R}^n,\xi,y\in \mathbb{R}^{dn},\eta\in \mathbb{R}^{n^2}} \ \Big\{\frac{1}{2}\|\theta-Y\|^2+p(y)+\delta_{+}(\eta)\Bigm\lvert A\theta+B\xi-\eta=0,\ \xi-y=0\Big\}.\label{P}
\end{align}
Given a parameter $\sigma>0$, the augmented Lagrangian function associated with \eqref{P} is defined by
\begin{align}
\widehat{\mathcal{L}}_{\sigma}(\theta,\xi,y,\eta;u,v)&=\frac{1}{2}\|\theta-Y\|^2+p(y)+\delta_{+}(\eta)-\langle u,A\theta+B\xi-\eta\rangle-\langle v,\xi-y\rangle \notag\\
&\qquad +\frac{\sigma}{2}\|A\theta+B\xi-\eta\|^2+\frac{\sigma}{2}\|\xi-y\|^2\notag\\
&=\frac{1}{2}\|\theta-Y\|^2+p(y)+\delta_{+}(\eta)+\frac{\sigma}{2}\|A\theta+B\xi-\eta-\frac{u}{\sigma}\|^2+\frac{\sigma}{2}\|\xi-y-\frac{v}{\sigma}\|^2\notag\\
&\qquad -\frac{1}{2\sigma}\|u\|^2-\frac{1}{2\sigma}\|v\|^2.
\label{augmented_lagrangian}
\end{align}
Then the {\tt sGS-ADMM} algorithm for solving \eqref{reformulated_problem} is given as in Algorithm \ref{alg:sGSadmm}.

\begin{algorithm}
	\caption{: Symmetric Gauss-Seidel based {\tt ADMM} for \eqref{reformulated_problem}}
	\label{alg:sGSadmm}
	\begin{algorithmic}[1]
		\STATE {\bfseries Initialization:} Choose an initial point $(\theta^0,\xi^0,y^0,\eta^0,u^0,v^0)\in \mathbb{R}^n\times \mathbb{R}^{dn}\times \mathbb{R}^{dn}\times\mathbb{R}^{n^2}\times\mathbb{R}^{n^2}\times \mathbb{R}^{dn}$, and a positive parameter $\sigma > 0$. For $k = 0, 1, 2, \dots$
		\REPEAT
		\STATE {\bfseries Step 1}. Compute
		\begin{align*}
		(y^{k+1},\eta^{k+1}) = \arg\min\ \widehat{\mathcal{L}}_{\sigma}(\theta^{k},\xi^k,y,\eta;u^{k},v^{k}).
		\end{align*}
		\\
		\STATE {\bfseries Step 2}. Compute
		\begin{align*}
		&\mbox{{\bfseries Step 2a}.  } \widehat{\theta}^{k+1} = \arg\min \ \widehat{\mathcal{L}}_{\sigma}(\theta,\xi^{k},y^{k+1},\eta^{k+1};u^{k},v^{k}),\\
		&\mbox{{\bfseries Step 2b}.  } \xi^{k+1} = \arg\min\ \widehat{\mathcal{L}}_{\sigma}(\widehat{\theta}^{k+1},\xi,y^{k+1},\eta^{k+1};u^{k},v^{k}),\\
		&\mbox{{\bfseries Step 2c}.  } \theta^{k+1} = \arg\min \ \widehat{\mathcal{L}}_{\sigma}(\theta,\xi^{k+1},y^{k+1},\eta^{k+1};u^{k},v^{k}).
		\end{align*}
		\\
		\STATE {\bfseries Step 3}. Compute
		\begin{align*}
		u^{k+1}=u^k-\tau\sigma(A\theta^{k+1}+B\xi^{k+1}-\eta^{k+1}),\quad v^{k+1}=v^k-\tau\sigma(\xi^{k+1}-y^{k+1}),
		\end{align*}
		where $\tau\in(0,(1+\sqrt{5})/2)$ is the step length that is typically chosen to be $1.618$.
		\UNTIL{Stopping criterion is satisfied.}
	\end{algorithmic}
\end{algorithm}

In Algorithm \ref{alg:sGSadmm}, all the subproblems can be solved explicitly. In {\bf Step 1}, $\eta^{k+1}$ and $y^{k+1}$ are separable and can be solved independently as
\begin{align*}
y^{k+1} = {\rm Prox}_{p/\sigma}(\xi^k-v^k/\sigma),\quad \eta^{k+1} = \Pi_{+}(A\theta^k+B\xi^k-u^k/\sigma),
\end{align*}
where $\Pi_{\pm}(\cdot)$ denotes the projection onto $\mathbb{R}^{n^2}_{\pm}$. In {\bf Step 2a} and {\bf Step 2c}, $\theta$ can be computed by solving the following linear system
\begin{align*}
(I_n+\sigma A^T A)\theta = Y-\sigma A^T (B\xi-\eta-u/\sigma).
\end{align*}
By noting that $A^TA=2nI_n-2e_ne_n^T$, one can apply the Sherman-Morrison-Woodbury formula to compute
\begin{align*}
(I_n+\sigma A^T A)^{-1} = \frac{1}{1+2\sigma n}(I_n+2\sigma e_ne_n^T).
\end{align*}
Thus $\theta$ can be computed in $O(n)$ operations. For {\bf Step 2b}, $\xi^{k+1}$ can be computed by solving the linear equation
\begin{align*}
(I_{dn}+B^T B)\xi = y^{k+1}+v^k/\sigma-B^T(A\widehat{\theta}^{k+1}-\eta^{k+1}-u^k/\sigma).
\end{align*}
As the coefficient matrix $I_{dn}+B^TB$ is a block diagonal matrix consisting of $n$ blocks of $d\times d$ submatrices, each $\xi_i$ can be computed separately, and the inverse of each block only needs to be computed once.

The convergence result of Algorithm \ref{alg:sGSadmm} is presented in the following theorem, which is taken directly from \cite[Theorem 5.1]{chen2017efficient}.
\begin{theorem}\label{thm:sgsadmm}
	Suppose that the solution set to the KKT system \eqref{KKT_system} is nonempty. Let $\{(\theta^k,\xi^k,y^k,\eta^k,u^k,v^k)\}$ be the sequence generated by Algorithm \ref{alg:sGSadmm}. Then $\{(\theta^k,\xi^k,y^k,\eta^k)\}$ converges to an optimal solution of problem \eqref{P}, and $\{(u^k,v^k)\}$ converges to an optimal solution of its dual \eqref{D}.
\end{theorem}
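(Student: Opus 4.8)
The plan is to recognize Algorithm \ref{alg:sGSadmm} as a concrete instance of the inexact symmetric Gauss--Seidel based multi-block {\tt ADMM} framework of \cite{chen2017efficient}, so that the proof reduces to casting the reformulated problem \eqref{P} into their template and verifying that its hypotheses hold; the convergence conclusion then follows verbatim from \cite[Theorem 5.1]{chen2017efficient}. First I would group the four block variables into two super-blocks according to whether their associated objective term is smooth: the quadratic block $(\theta,\xi)$, carrying the smooth coupling $\frac{1}{2}\|\theta-Y\|^2$ (with $\xi$ having a zero objective term), and the nonsmooth block $(y,\eta)$, carrying the separable proper closed convex functions $p(y)$ and $\delta_{+}(\eta)$. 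With this grouping, the two affine constraints $A\theta+B\xi-\eta=0$ and $\xi-y=0$ realize the linear coupling maps required by the template, and the augmented Lagrangian \eqref{augmented_lagrangian} coincides with the one used there.

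The crux is to justify why the three updates in \textbf{Step 2} (forward on $\theta$, then $\xi$, then a backward correction on $\theta$) constitute a single legitimate update of the joint block $(\theta,\xi)$. Viewed as a function of $(\theta,\xi)$, the augmented Lagrangian \eqref{augmented_lagrangian} is convex quadratic whose Hessian is the $2\times 2$ block matrix with diagonal blocks $I_n+\sigma A^TA$ and $\sigma(I_{dn}+B^TB)$ and off-diagonal coupling $\sigma A^TB$. I would then invoke the symmetric Gauss--Seidel decomposition underlying \cite{chen2017efficient}: the sweep $\theta\to\xi\to\theta$ is precisely the sGS decomposition of the joint minimization over $(\theta,\xi)$, hence equivalent to minimizing $\widehat{\mathcal{L}}_\sigma$ jointly in $(\theta,\xi)$ plus an additional semidefinite proximal term $\tfrac{1}{2}\|(\theta,\xi)-(\theta^k,\xi^k)\|_{\mathcal{T}}^2$ with $\mathcal{T}\succeq 0$ determined by the off-diagonal block. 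This identity is what allows the ostensibly three-block method to be analyzed as a two-block semi-proximal {\tt ADMM}.

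Once this equivalence is in hand, the scheme is a two-block semi-proximal {\tt ADMM}: the $(y,\eta)$ block with no proximal term, and the $(\theta,\xi)$ block carrying the sGS proximal term, followed by the dual ascent in \textbf{Step 3} with step length $\tau\in(0,(1+\sqrt{5})/2)$. I would then appeal to the convergence theorem for the semi-proximal {\tt ADMM}, under which the stated assumption that the solution set of \eqref{KKT_system} is nonempty guarantees the existence of a Karush--Kuhn--Tucker point; note that strong convexity of the objective is \emph{not} required. This yields that $\{(\theta^k,\xi^k,y^k,\eta^k)\}$ converges to an optimal solution of \eqref{P} and $\{(u^k,v^k)\}$ to an optimal solution of \eqref{D}.

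The main obstacle is the verification underlying the sGS step: that the diagonal blocks $I_n+\sigma A^TA$ and $\sigma(I_{dn}+B^TB)$ are positive definite, so that the sub-block minimizations for $\theta$ and for $\xi$ are uniquely solvable and the sGS proximal operator $\mathcal{T}$ is well defined and positive semidefinite. Here positive definiteness is immediate since $A^TA=2nI_n-2e_ne_n^T\succeq 0$ and $B^TB\succeq 0$, and the explicit inverses---via the Sherman--Morrison--Woodbury formula for $(I_n+\sigma A^TA)^{-1}$ and via the block-diagonal structure of $I_{dn}+B^TB$---confirm both solvability and the per-iteration efficiency of each update. Since Theorem \ref{thm:sgsadmm} is quoted directly from \cite{chen2017efficient}, this template-matching together with the positive definiteness check constitutes the entire substantive content of the argument.
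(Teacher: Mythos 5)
Your proposal is correct and matches the paper's approach: the paper proves this theorem simply by quoting \cite[Theorem 5.1]{chen2017efficient} directly, and your template-matching (the grouping into the smooth block $(\theta,\xi)$ and the nonsmooth block $(y,\eta)$, the identification of the $\theta\to\xi\to\theta$ sweep as the sGS decomposition yielding an equivalent two-block semi-proximal {\tt ADMM}, and the positive definiteness checks on $I_n+\sigma A^TA$ and $\sigma(I_{dn}+B^TB)$) is precisely the verification that this citation implicitly requires.
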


\section{More results on comparison of algorithms for solving \eqref{reformulated_problem}}
\label{appendix: numerical}
Table \ref{table_org} -- Table \ref{table_1norm} show the comparison among {\tt proxALM}, {\tt sGS-ADMM} and {\tt MOSEK} on instances with relatively large $d$ and $n$. Note that here we set the stopping criterion to $R_{\rm KKT}\leq 10^{-6}$ to show that our proposed {\tt proxALM} is capable of solving the problem \eqref{reformulated_problem} to relatively high accuracy. As one can see that, when estimating the function $\psi(x)=\exp(p^T x)$ for moderate $(d,n)=(100,1000)$, {\tt proxALM} is about $3$ times faster than {\tt sGS-ADMM}, and about $29$ times faster than {\tt MOSEK}. For the case when $d=100$, $n=4000$, which is a large problem with $404,000$ variables and about $16,000,000$ inequality constraints, {\tt MOSEK} runs out of memory, while {\tt proxALM} could solve it within $7$ minutes and {\tt sGS-ADMM} takes $17$ minutes. From the tables, we can see that {\tt sGS-ADMM} performs much better than {\tt MOSEK} in each instance, and {\tt proxALM} performs even better than {\tt sGS-ADMM}. In most of the cases, {\tt proxALM} is at least $10$ times faster than {\tt MOSEK}.

\begin{table}
	\caption{Convex regression for test function $\psi(x)=\exp(p^T x)$, where $p$ is a given random vector with each coordinate drawn from the standard normal distribution}
	\label{table_org}
	\tabcolsep 2.5pt
	\begin{threeparttable}
		\begin{tabular}{ccccccccc}
			\hline\noalign{\smallskip}
			\multicolumn{2}{c}{
				\tikz{
					\node[below left, inner sep=-1pt] (def) {Algorithm};
					\node[above right,inner sep=0.5pt] (abc) {$(d,n)$};
					\draw (def.north west|-abc.north west) -- (def.south east-|abc.south east);
			}}
			& $(50,500)$ & $(50,1000)$ &  $(50,2000)$  & $(100,1000)$ & $(100,2000)$ & $(100,4000)$\\
			\noalign{\smallskip}\hline\noalign{\smallskip}
			\multirow{3}*{\tabincell{c}{\tt proxALM}}
			& Iteration & 12(11)\tnote{*} & 16(20) & 21(38) & 15(20) & 20(38) & 26(51)\\
			& Time & 00:00:02 & 00:00:06 & 00:00:57 & 00:00:07 & 00:01:14 & 00:06:44\\
			& $R_{\rm KKT}$ & 4.18e-8 & 8.97e-8 & 9.14e-7 & 6.14e-7 & 3.41e-7 & 9.48e-7 \\
			\noalign{\smallskip}\hline\noalign{\smallskip}
			\multirow{3}*{\tabincell{c}{\tt sGS-ADMM} } 	
			& Iteration & 389 & 562 & 1206 & 355 & 701 & 1263\\
			& Time & 00:00:05 & 00:00:25 & 00:03:57 & 00:00:19 & 00:02:39 & 00:16:59 \\
			& $R_{\rm KKT}$ & 9.95e-7  & 9.88e-7 & 9.92e-7 & 9.99e-7 & 9.91e-7 & 9.98e-7 \\
			\noalign{\smallskip}\hline\noalign{\smallskip}
			\multirow{3}*{\tabincell{c}{\tt MOSEK} } 	
			& Iteration & 10 & 11 & 13 & 11 & 10 & O.M.\\
			& Time & 00:00:20 & 00:01:50 & 00:10:50 & 00:03:22 & 00:19:46 & O.M. \\
			& $R_{\rm KKT}$ & 6.59e-9  & 3.92e-9 & 1.53e-7 & 7.98e-10 & 7.65e-8 & O.M. \\
			\noalign{\smallskip}\hline
		\end{tabular}
		\begin{tablenotes}\footnotesize
			\item[*] ``12(11)" means ``{\tt proxALM} iterations (total inner {\tt SSN} iterations)". O.M. means the algorithm runs out of memory. Time is in the format of hours:minutes:seconds.
		\end{tablenotes}
	\end{threeparttable}
\end{table}

\begin{table}
	\caption{Convex regression with monotone constraint (non-decreasing) for the test function $\psi(x)=(e_d^T x)_{+}$}
	\label{table_nondecreasing}
	\tabcolsep 2.5pt
		\begin{tabular}{ccccccccc}
			\hline\noalign{\smallskip}
			\multicolumn{2}{c}{
				\tikz{
					\node[below left, inner sep=-1pt] (def) {Algorithm};
					\node[above right,inner sep=0.5pt] (abc) {$(d,n)$};
					\draw (def.north west|-abc.north west) -- (def.south east-|abc.south east);
			}}
			& $(50,500)$ & $(50,1000)$ &  $(50,2000)$  & $(100,1000)$ & $(100,2000)$ & $(100,4000)$\\
			\noalign{\smallskip}\hline\noalign{\smallskip}
			\multirow{3}*{\tabincell{c}{\tt proxALM}}
			& Iteration & 15(18) & 17(23) & 23(77) & 17(29) & 21(59) & 32(96)\\
			& Time & 00:00:02 & 00:00:07 & 00:02:48 & 00:00:12 & 00:02:09 & 00:12:34\\
			& $R_{\rm KKT}$ & 1.87e-7  & 1.50e-7 & 1.38e-7 & 8.16e-7 & 8.23e-7 & 8.97e-7 \\
			\noalign{\smallskip}\hline\noalign{\smallskip}
			\multirow{3}*{\tabincell{c}{\tt sGS-ADMM} } 	
			& Iteration & 529 & 917 & 1685 & 541 & 905 & 1582\\
			& Time & 00:00:08 & 00:00:49 & 00:06:18 & 00:00:34 & 00:03:50 & 00:25:18 \\
			& $R_{\rm KKT}$ & 9.79e-7 & 9.99e-7 & 9.98e-7 & 9.85e-7 & 9.88e-7 & 9.98e-7 \\
			\noalign{\smallskip}\hline\noalign{\smallskip}
			\multirow{3}*{\tabincell{c}{\tt MOSEK} } 	
			& Iteration & 14 & 13 & 14 & 13 & 16 & O.M.\\
			& Time & 00:00:24 & 00:02:00 & 00:11:32 & 00:03:47 & 00:25:23 & O.M. \\
			& $R_{\rm KKT}$ & 1.54e-9  & 1.45e-9 & 2.63e-8 & 2.37e-7 & 1.31e-9 & O.M. \\
			\noalign{\smallskip}\hline
		\end{tabular}
\end{table}

\begin{table}
	\caption{Convex regression with box constraint ($L=0_d$, $U=e_d$) for the test function $\psi(x)=\ln(1+\exp(e_d^Tx))$}
	\label{table_twobound}
	\tabcolsep 2.5pt
	\begin{tabular}{ccccccccc}
		\hline\noalign{\smallskip}
		\multicolumn{2}{c}{
			\tikz{
				\node[below left, inner sep=-1pt] (def) {Algorithm};
				\node[above right,inner sep=0.5pt] (abc) {$(d,n)$};
				\draw (def.north west|-abc.north west) -- (def.south east-|abc.south east);
		}}
		& $(50,500)$ & $(50,1000)$ &  $(50,2000)$  & $(100,1000)$ & $(100,2000)$ & $(100,4000)$\\
		\noalign{\smallskip}\hline\noalign{\smallskip}
		\multirow{3}*{\tabincell{c}{\tt proxALM}}
		& Iteration & 23(40) & 24(67) & 30(135) & 17(28) & 21(60) & 33(102)\\
		& Time & 00:00:03 & 00:00:18 & 00:04:35 & 00:00:12 & 00:02:32 & 00:12:56\\
		& $R_{\rm KKT}$ & 9.55e-7  & 8.79e-7 & 7.02e-7 & 6.65e-8 & 3.54e-7 & 9.39e-7 \\
		\noalign{\smallskip}\hline\noalign{\smallskip}
		\multirow{3}*{\tabincell{c}{\tt sGS-ADMM} } 	
		& Iteration & 663 & 1016 & 2689 & 513 & 871 & 1541\\
		& Time & 00:00:11 & 00:00:54 & 00:10:05 & 00:00:33 & 00:03:50 & 00:23:32 \\
		& $R_{\rm KKT}$ & 9.60e-7  & 9.73e-7 & 9.98e-7 & 9.92e-7 & 9.95e-7 & 1.00e-6 \\
		\noalign{\smallskip}\hline\noalign{\smallskip}
		\multirow{3}*{\tabincell{c}{\tt MOSEK} } 	
		& Iteration & 19 & 24 & 31 & 18 & 15 & O.M.\\
		& Time & 00:00:31 & 00:02:52 & 00:19:03 & 00:04:50 & 00:25:10 & O.M. \\
		& $R_{\rm KKT}$ & 2.40e-7  & 6.03e-8 & 1.11e-8 & 3.18e-9 & 2.23e-9 & O.M. \\
		\noalign{\smallskip}\hline
	\end{tabular}
\end{table}

\begin{table}
	\caption{Convex regression with Lipschitz constraint ($p=1$, $q=\infty$, $L=1$) for the test function $\psi(x)=\sqrt{1+x^Tx}$}
	\label{table_infnorm}
	\tabcolsep 2.5pt
	\begin{tabular}{ccccccccc}
		\hline\noalign{\smallskip}
		\multicolumn{2}{c}{
			\tikz{
				\node[below left, inner sep=-1pt] (def) {Algorithm};
				\node[above right,inner sep=0.5pt] (abc) {$(d,n)$};
				\draw (def.north west|-abc.north west) -- (def.south east-|abc.south east);
		}}
		& $(50,500)$ & $(50,1000)$ &  $(50,2000)$  & $(100,1000)$ & $(100,2000)$ & $(100,4000)$\\
		\noalign{\smallskip}\hline\noalign{\smallskip}
		\multirow{3}*{\tabincell{c}{\tt proxALM}}
		& Iteration & 13(14) & 17(30) & 24(51) & 16(26) & 21(44) & 33(72)\\
		& Time & 00:00:02 & 00:00:08 & 00:01:12 & 00:00:11 & 00:01:26 & 00:07:59\\
		& $R_{\rm KKT}$ & 5.05e-7  & 5.08e-7 & 9.45e-7 & 4.31e-7 & 2.41e-7 & 9.77e-7 \\
		\noalign{\smallskip}\hline\noalign{\smallskip}
		\multirow{3}*{\tabincell{c}{\tt sGS-ADMM} } 	
		& Iteration & 531 & 928 & 1730 & 509 & 973 & 1691\\
		& Time & 00:00:09 & 00:00:50 & 00:06:47 & 00:00:33 & 00:04:21 & 00:27:33 \\
		& $R_{\rm KKT}$ & 9.77e-7 & 9.97e-7  & 9.84e-7 & 9.89e-7 & 9.90e-7 & 9.98e-7 \\
		\noalign{\smallskip}\hline\noalign{\smallskip}
		\multirow{3}*{\tabincell{c}{\tt MOSEK} } 	
		& Iteration & 10 & 11 & 12 & 10 & 11 & O.M.\\
		& Time & 00:00:23 & 00:01:55 & 00:10:32 & 00:03:38 & 00:21:27 & O.M. \\
		& $R_{\rm KKT}$ & 7.51e-9  & 3.46e-10 & 1.16e-9 & 5.87e-13 & 3.00e-10 & O.M. \\
		\noalign{\smallskip}\hline
	\end{tabular}
\end{table}

\begin{table}
	\caption{Convex regression with Lipschitz constraint ($p=2$, $q=2$, $L=\lambda_{\rm max}(Q)$) for the test function $\psi(x)=\sqrt{x^T Qx}$}
	\label{table_2norm}
	\tabcolsep 2.5pt
	\begin{threeparttable}
		\begin{tabular}{ccccccccc}
		\hline\noalign{\smallskip}
		\multicolumn{2}{c}{
			\tikz{
				\node[below left, inner sep=-1pt] (def) {Algorithm};
				\node[above right,inner sep=0.5pt] (abc) {$(d,n)$};
				\draw (def.north west|-abc.north west) -- (def.south east-|abc.south east);
		}}
		& $(50,500)$ & $(50,1000)$ &  $(50,2000)$  & $(100,1000)$ & $(100,2000)$ & $(100,4000)$\\
		\noalign{\smallskip}\hline\noalign{\smallskip}
		\multirow{3}*{\tabincell{c}{\tt proxALM}}
		& Iteration & 12(11) & 17(30) & 21(41) & 15(20) & 21(41) & 23(48)\\
		& Time & 00:00:02 & 00:00:08 & 00:00:55 & 00:00:08 & 00:01:12 & 00:06:22\\
		& $R_{\rm KKT}$ & 1.27e-10  & 4.07e-7 & 1.94e-7 & 3.28e-7 & 7.10e-7 & 9.69e-7 \\
		\noalign{\smallskip}\hline\noalign{\smallskip}
		\multirow{3}*{\tabincell{c}{\tt sGS-ADMM} } 	
		& Iteration & 541 & 953 & 1481 & 494 & 934 & 1591\\
		& Time & 00:00:11 & 00:00:53 & 00:05:39 & 00:00:35 & 00:04:22 & 00:23:59 \\
		& $R_{\rm KKT}$ & 9.76e-7  & 9.99e-7 & 9.99e-7 & 9.94e-7 & 9.91e-7 & 9.91e-7 \\
		\noalign{\smallskip}\hline\noalign{\smallskip}
		\multirow{3}*{\tabincell{c}{\tt MOSEK} } 	
		& Iteration & 10 & 13 & 13 & 11 & 12 & O.M.\\
		& Time & 00:00:23 & 00:02:03 & 00:10:57 & 00:03:44 & 00:22:47 & O.M. \\
		& $R_{\rm KKT}$ & 2.50e-7  & 1.06e-9 & 1.19e-8 & 7.53e-9 & 2.12e-12 & O.M. \\
		\noalign{\smallskip}\hline
		\end{tabular}
	\begin{tablenotes}\footnotesize
		\item[*] $Q\in \mathbb{R}^{d\times d}$ is a randomly generated symmetric and positive definite matrix with known largest eigenvalue.
	\end{tablenotes}
	\end{threeparttable}
\end{table}

\begin{table}
	\caption{Convex regression with Lipschitz constraint ($p=\infty$, $q=1$, $L=1$) for the test function $\psi(x)=\ln(1+e^{x_1}+\cdots+e^{x_d})$}
	\label{table_1norm}
	\tabcolsep 2.5pt
	\begin{tabular}{ccccccccc}
		\hline\noalign{\smallskip}
		\multicolumn{2}{c}{
			\tikz{
				\node[below left, inner sep=-1pt] (def) {Algorithm};
				\node[above right,inner sep=0.5pt] (abc) {$(d,n)$};
				\draw (def.north west|-abc.north west) -- (def.south east-|abc.south east);
		}}
		& $(50,500)$ & $(50,1000)$ &  $(50,2000)$  & $(100,1000)$ & $(100,2000)$ & $(100,4000)$\\
		\noalign{\smallskip}\hline\noalign{\smallskip}
		\multirow{3}*{\tabincell{c}{\tt proxALM}}
		& Iteration & 12(12) & 16(22) & 22(44) & 15(21) & 19(35) & 27(62)\\
		& Time & 00:00:02 & 00:00:06 & 00:00:49 & 00:00:08 & 00:01:09 & 00:08:37\\
		& $R_{\rm KKT}$ & 3.04e-7  & 4.43e-7 & 8.49e-7 & 2.07e-7 & 6.63e-7 & 8.41e-7 \\
		\noalign{\smallskip}\hline\noalign{\smallskip}
		\multirow{3}*{\tabincell{c}{\tt sGS-ADMM} } 	
		& Iteration & 413 & 767 & 1401 & 436 & 775 & 1379\\
		& Time & 00:00:08 & 00:00:45 & 00:05:25 & 00:00:31 & 00:03:24 & 00:21:59 \\
		& $R_{\rm KKT}$ & 9.88e-7  & 9.96e-7 & 9.80e-7 & 9.79e-7 & 9.99e-7 & 1.00e-6 \\
		\noalign{\smallskip}\hline\noalign{\smallskip}
		\multirow{3}*{\tabincell{c}{\tt MOSEK} } 	
		& Iteration & 12 & 12 & 14 & 13 & 8 & O.M.\\
		& Time & 00:00:41 & 00:03:28 & 00:21:06 & 00:07:26 & 00:39:55 & O.M. \\
		& $R_{\rm KKT}$ & 1.23e-8  & 1.26e-7 & 5.33e-9 & 3.09e-10 & 2.92e-9 & O.M. \\
		\noalign{\smallskip}\hline
	\end{tabular}
\end{table}

\section{Property of basket option of two European call options}
\label{appendix: calloption}
The function $V(x,y)$ is differentiable since it is the solution of the Black-Scholes PDE. By the definition of $V$, we can see that $V$ is non-decreasing in $x$ and $y$, which means that $\nabla V(x,y)\geq 0$. According to the distribution of $S_T^1$ and $S_T^2$, we have that
\begin{align*}
V(x,y) = e^{-r(T-t)} \mathbb{E}_z f(x,y,z),
\end{align*}
where
\begin{align*}
f(x,y,z)&=(xw_1 e^{(r-\sigma_1^2/2)(T-t)+\sqrt{T-t}z_1}+yw_2 e^{(r-\sigma_2^2/2)(T-t)+\sqrt{T-t}z_2}-K)_+,\\
\begin{pmatrix}
z_1\\[0.4em]
z_2
\end{pmatrix}
&\sim \mathcal{N}(0,\begin{pmatrix}
\sigma_1^2 & \rho\sigma_1\sigma_2\\[0.4em]
\rho\sigma_1\sigma_2 &\sigma_2^2
\end{pmatrix}).
\end{align*}
For any $x_1,x_2,y\in\mathbb{R}$, we can see that
\begin{align*}
|V(x_1,y)-V(x_2,y)|&=e^{-r(T-t)} \Big| \mathbb{E}_z [f(x_1,y,z)-f(x_1,y,z)]\Big|\\
&\leq e^{-r(T-t)} \mathbb{E}_z |f(x_1,y,z)-f(x_1,y,z)|\\
&\leq e^{-r(T-t)} \mathbb{E}_z  [w_1e^{(r-\sigma_1^2/2)(T-t)+\sqrt{T-t}z_1} |x_1-x_2|]\\
&=w_1|x_1-x_2|e^{-\sigma_1^2/2(T-t)}\mathbb{E}_z [e^{\sqrt{T-t}z_1}]\\
&=w_1|x_1-x_2|.
\end{align*}
Similarly, we can prove that for any $x,y_1,y_2\in\mathbb{R}$,
\begin{align*}
|V(x,y_1)-V(x,y_2)|\leq w_2|y_1-y_2|.
\end{align*}
Therefore, we have that fact that $0\leq \nabla V(x,y)\leq w$ for any $x,y$.

\section{A finite difference method for estimating the basket option of two European call options}
\label{appendix: FDM}

It is well-known that the function $V(x,y)=U(0,x,y)$, where $U$ satisfies the Black-Scholes PDE
\begin{align*}
\left\{\begin{aligned}
&\frac{\partial U}{\partial t}+rx\frac{\partial U}{\partial x}+ry\frac{\partial U}{\partial y}+\frac{1}{2}\sigma_1^2x^2\frac{\partial^2 U}{\partial^2 x^2}+\rho \sigma_1\sigma_2xy\frac{\partial^2 U}{\partial xy}+\frac{1}{2}\sigma_2^2y^2\frac{\partial^2 U}{\partial^2 y^2}-rU=0,\\
&U(T,x,y)=(w_1x+w_2y-K)^+.
\end{aligned}
\right.
\end{align*}
Let $\tau = T-t$, $u(\tau,x,y)=U(t,x,y)$, then $u$ satisfies
\begin{align*}
\left\{\begin{aligned}
&\frac{\partial u}{\partial \tau}-rx\frac{\partial u}{\partial x}-ry\frac{\partial u}{\partial y}-\frac{1}{2}\sigma_1^2x^2\frac{\partial^2 u}{\partial^2 x^2}-\rho \sigma_1\sigma_2xy\frac{\partial^2 u}{\partial xy}-\frac{1}{2}\sigma_2^2y^2\frac{\partial^2 u}{\partial^2 y^2}+ru=0,\\
&u(0,x,y)=(w_1x+w_2y-K)^+.
\end{aligned}
\right.
\end{align*}
The above convection-diffusion equation can be solved numerically on a bounded region $(0,x_{\max})\times (0,y_{\max})$ by the standard finite difference method with the artificial boundary conditions
\begin{align*}
\left\{\begin{aligned}
&u(\tau,x,0)=c(w_1x,K,r,\tau,\sigma_1),\\
&u(\tau,0,y)=c(w_2y,K,r,\tau,\sigma_2),\\
&\frac{\partial }{\partial x}u(\tau,x_{\max},y)=w_1,\\
&\frac{\partial }{\partial y}u(\tau,x,y_{\max})=w_2,\\
\end{aligned}
\right.
\end{align*}
where
\begin{align*}
c(x,K,r,\tau,\sigma)=x\Phi(d_1)-Ke^{-r\tau}\Phi(d_2),\quad
d_{1,2} = \frac{\log \frac{x}{K}+(r\pm \frac{1}{2}\sigma^2)\tau}{\sigma \sqrt{\tau}},
\end{align*}
and $\Phi(\cdot)$ is the cumulative distribution function of the standard normal distribution.

\end{document}